\newtheorem{theorem}{Theorem}[section]
\newtheorem{corollary}[theorem]{Corollary}
\newtheorem{lemma}[theorem]{Lemma}
\newtheorem{proposition}[theorem]{Proposition}
\theoremstyle{definition}
\newtheorem{definition}[theorem]{Definition}
\newtheorem{remark}[theorem]{Remark}
\newtheorem{example}[theorem]{Example}
\numberwithin{equation}{section}
\newcommand{\modu}{\operatorname{mod}}
\newcommand{\Hom}{\operatorname{Hom}}
\newcommand{\Gen}{\operatorname{Gen}}
\newcommand{\Cogen}{\operatorname{Cogen}}
\newcommand{\rad}{\operatorname{rad}}
\newcommand{\Ext}{\operatorname{Ext}}
\newcommand{\inj}{\operatorname{inj}}
\newcommand{\add}{\operatorname{add}}
\newcommand{\proj}{\operatorname{proj}}
\newcommand{\Ker}{\operatorname{Ker}}
\newcommand{\Coker}{\operatorname{Coker}}
\newcommand{\Ima}{\operatorname{Im}}
\begin{document}

%%%%% To ease editing, for IMPAN journals add:

\baselineskip=17pt

%%%%%%%%%%%%%%%%

\title[Support $\tau$-tilting modules and recollements]{Support $\tau$-tilting modules and recollements}

\author[X. Ma]{Xin Ma}
\address[X. Ma]{College of Science\\
 Henan University of Engineering\\
  451191 Zhengzhou, P.R. China}
%\email{kowalska@impan.pl}

\author[Z. Xie]{Zongzhen Xie}
\address[Z. Xie]{Department of Mathematics and Computer Science\\
 School of Biomedical Engineering and Informatics\\
  Nanjing Medical University\\
  211166 Nanjing, P.R. China}
%\email{jk.nowak@im.uj.edu.pl}

\author[T. Zhao]{Tiwei Zhao$^\ast$}
\address[T. Zhao]{School of Mathematical Sciences\\
 Qufu Normal University\\
  273165 Qufu, P.R. China}
\email{tiweizhao@qfnu.edu.cn}

\date{}
\thanks{$^\ast$Corresponding author}

\begin{abstract}
Let $(\modu \Lambda',\modu  \Lambda,\modu  \Lambda'')$ be a recollement of abelian  categories for artin algebras $\Lambda'$, $\Lambda$ and $\Lambda''$. Under certain conditions, we present an explicit construction of gluing of (support) $\tau$-tilting modules in $\modu  \Lambda$ with respect to (support) $\tau$-tilting modules in $\modu  \Lambda'$ and $\modu \Lambda''$ respectively. On the other hand, we study the construction of (support) $\tau$-tilting modules in $\modu \Lambda'$ and $\modu  \Lambda''$ obtained from (support) $\tau$-tilting modules in $\modu  \Lambda$.
\end{abstract}

\subjclass[2020]{ 18E40, 16S90, 16G10}

\keywords{torsion pairs, recollements, (support) $\tau$-tilting modules}

\maketitle

\section{Introduction} %delete * to number this section

Recollements of abelian and triangulated categories were introduced by Be{\u\i}linson, Bernstein and Deligne \cite{BBD81F} in
connection with derived categories of sheaves on topological spaces with the idea that one triangulated category may be glued together from two others, which play an important role in
representation theory of algebras \cite{BBD81F,BARI07H,CXW12A,HY14R,JJP65S,QYYHY16R,ZP13G,Zhe}. Recollements of abelian categories and triangulated categories
are closely related, and they possess similar properties in many aspects.

Recently, gluing techniques with respect to a recollement of triangulated or abelian categories have been investigated; for instance, for a given recollement of triangulated categories, Chen \cite{CJM13C} glued cotorsion pairs in a recollement; Liu, Vit\'oria and Yang presented the construction of gluing of silting objects \cite{LQHVJ14G} in a recollement, which follows from the bijection between the equivalence class of silting objects and that of bounded co\mbox{-}t\mbox{-}structures whose co-hearts are additively generated by one object (see \cite{MHOSVEC13A}). For a recollement of abelian categories, Parra and Vit\'oria \cite{PCVJ17P} glued some basic properties of abelian categories (well-poweredness, Grothendieck's axioms, existence of a generator) in a recollement. For more references, see \cite{MXHZY, MZ, Zh}.

Tilting theory is one of  important tools in the representation theory of artin algebras. The classical tilting module was introduced by Brenner and Butler \cite{BB79G}, and Happel and Ringel \cite{HDRCM82T}.  One of the important properties
of tilting modules is that when an almost complete tilting module has two complements, and thus one can make a mutation process for it. However, an almost complete tilting module has not always two complements. Thus, to make mutation always possible, it is desirable to enlarge
the class of tilting modules in order to get the more
regular property. In order to accomplish this goal,  Adachi, Iyama and Reiten introduced $\tau$-tilting theory in  \cite{ATIO14t}, and recently, Yang and Zhu in \cite{YZ} generalized the results of Adachi-Iyama-Reiten further in the setting of triangulated categories. In their study, support $\tau$-tilting modules are very closely related with torsion classes, torsionfree classes, and cluster tilting subcategories; for instance, there are the bijection between the class of basic support $\tau$-tilting modules and that of functorially finite torsion classes (see \cite{ATIO14t,GH,LX,YZ,ZZ}).

Applications of gluing techniques in recollements lead us to ask whether a (support) $\tau$-tilting module can be ``glued together'' from two other (support) $\tau$-tilting modules in a recollement of module categories; or equivalently, whether a functorially finite torsion class can be ``glued together'' from two other functorially finite torsion classes in a recollement of abelian categories. In this paper, we answer these questions, which provide some methods and strategies for constructing (support) $\tau$-tilting modules.

In this paper, we use $\modu \Lambda$ to denote the category of finitely generated left $\Lambda$-module for an artin algebra $\Lambda$. Let $(\mathcal{A},\mathcal{B},\mathcal{C})$
be a recollement of abelian categories. In particular, let $\Lambda'$, $\Lambda$ and $\Lambda''$ be artin algebras such that
$$\xymatrix{\modu \Lambda'\ar[rr]!R|-{i_{*}}&&\ar@<-2ex>[ll]!R|-{i^{*}}
\ar@<2ex>[ll]!R|-{i^{!}}\modu \Lambda
\ar[rr]!L|-{j^{*}}&&\ar@<-2ex>[ll]!L|-{j_{!}}\ar@<2ex>[ll]!L|-{j_{*}}
\modu \Lambda''}$$
is a recollement of abelian categories.
Our main results are the following
\begin{theorem}\label{thm1}
 Let $T'$ and $T''$ be support $\tau$-tilting modules in $\modu \Lambda'$ and $\modu \Lambda''$ respectively, and $(\mathcal{T},\mathcal{F})$ a glued torsion pair in $\modu \Lambda$ with respect to $(\Gen T', T'^{\perp_{0}})$ and $(\Gen T'', T''^{\perp_{0}})$.
  \begin{itemize}
\item[(1)] If $i^{!}$ is exact and $i_{*}i^{!}(\mathcal{T})\subseteq\mathcal{T}$, then there is a support $\tau$-tilting $\Lambda$-module $T$ such that $(\mathcal{T},\mathcal{F})=(\Gen T, T^{\perp_{0}})$.
    \item[(2)] If $i^{*}$ is exact and $i_{*}i^{*}(\mathcal{F})\subseteq\mathcal{F}$, then there is a support $\tau$-tilting $\Lambda$-module $T$ such that $(\mathcal{T},\mathcal{F})=(\Gen T, T^{\perp_{0}})$.
  \end{itemize}
\end{theorem}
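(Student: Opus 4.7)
My plan is to deduce the corollary from the Adachi--Iyama--Reiten bijection between basic support $\tau$-tilting $\Lambda$-modules and functorially finite torsion classes in $\modu \Lambda$: the pair $(\Gen T, \mathcal{F}(T))$ assigned to a support $\tau$-tilting $T$ is precisely a functorially finite torsion pair, and every such torsion pair arises this way. So the task reduces to producing a support $\tau$-tilting module $T$ whose associated torsion pair is the glued pair $(\mathcal{T},\mathcal{F})$; equivalently, to showing that $\mathcal{T}$ is functorially finite and to exhibiting a generator of it that is Ext-projective in $\mathcal{T}$.

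I expect the corollary to fall directly out of a main gluing theorem proved earlier in Section~3 of the paper, stating that under the listed hypotheses one can explicitly glue the support $\tau$-tilting modules $T'$ and $T''$ to a support $\tau$-tilting $\Lambda$-module. Given that, the strategy is: (a) apply the earlier gluing theorem to $T'$ and $T''$ to produce a candidate module $T \in \modu \Lambda$; (b) verify that $\Gen T$ coincides with the glued torsion class $\mathcal{T}$ (so that $\mathcal{F}(T) = \mathcal{F}$ automatically by the torsion pair structure); (c) conclude via the AIR bijection. For case~(1), the natural construction of $T$ should combine $i_{*}(T')$ with a lift of $T''$ via $j_{!}$ (or via the image of the adjunction map $j_{!}T'' \to j_{*}T''$), producing $T = i_{*}(T') \oplus \widetilde{T''}$; the invariance $i_{*}i^{!}(\mathcal{T})\subseteq \mathcal{T}$ is precisely what guarantees that $i_{*}(T')$ sits inside $\mathcal{T}$ and interacts correctly with the $j$-part. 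Case~(2) is dual, with $i^{*}$ replacing $i^{!}$ and the torsion-free side playing the role of the torsion side.

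The substantive verification is that $\Gen T = \mathcal{T}$. One inclusion follows because $i_{*}(T') \in \mathcal{T}$ (using $i_{*}i^{!}(\mathcal{T})\subseteq \mathcal{T}$ applied to objects already in $\mathcal{T}$) and $j_{!}(T'') \in \mathcal{T}$ by construction of the glued torsion pair, so $\Gen T \subseteq \mathcal{T}$. For the reverse inclusion, take any $M \in \mathcal{T}$; using the canonical exact sequence $j_{!}j^{*}M \to M \to i_{*}i^{*}M \to 0$ (or its $i^{!}$-variant), together with the facts that $j^{*}M \in \Gen T''$ and $i^{*}M$ (resp.\ $i^{!}M$) lies in $\Gen T'$, one lifts generating epimorphisms along the recollement functors and glues them to a surjection from a power of $T$ onto $M$. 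Exactness of $i^{!}$ in case~(1) (respectively $i^{*}$ in case~(2)) is what makes the lifted maps sit in the correct exact sequences without loss.

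The main obstacle will be the gluing of epimorphisms across the recollement: a priori, $j_{!}$ is only right exact and $i_{*}$ is exact but need not preserve generation properly, so one must carefully chase the canonical unit/counit morphisms and use the adjunction identities $j^{*}j_{!} = \id$, $i^{*}i_{*} = \id = i^{!}i_{*}$, $j^{*}i_{*}=0$ to glue the separate right approximations in $\modu\Lambda'$ and $\modu\Lambda''$ into a right $\add T$-approximation in $\modu\Lambda$. The conditions on $i^{!}$ (resp.\ $i^{*}$) are tailored precisely so that the resulting amalgamation stays inside $\mathcal{T}$ (resp.\ $\mathcal{F}$), and I expect this is what the earlier theorem encapsulates; the corollary then amounts to assembling the pieces and invoking the AIR correspondence.
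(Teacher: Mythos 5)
Your plan misidentifies both what the earlier result in Section~3 says and where the real work lies. The paper's proof of this corollary (Corollary \ref{cor-st}) is short only because Theorem \ref{rec-func} has already done the heavy lifting: there it is shown, by amalgamating a left $\Gen T'$-approximation of $i^{!}(B)$ and a left $\Gen T''$-approximation of $j^{*}(B)$ through two pushout diagrams (using exactness of $i^{!}$ and Lemma \ref{lem-3.1} to identify $i^{!}(\mathcal{T})=i^{*}(\mathcal{T})\subseteq\Gen T'$), that the glued class $\mathcal{T}$ (case (1)) resp.\ $\mathcal{F}$ (case (2)) is functorially finite; then Remark \ref{bijection} returns $T=P(\mathcal{T})$ with $(\Gen T,\mathcal{F}(T))=(\mathcal{T},\mathcal{F})$, with no explicit formula for $T$. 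Your proposal instead presumes a theorem that glues the modules themselves, takes $T=i_{*}(T')\oplus\widetilde{T''}$ with $\widetilde{T''}$ a $j_{!}$-type lift of $T''$, and treats this candidate as the asserted support $\tau$-tilting module. No such statement is available under the hypotheses of the corollary: the explicit gluing $T=i_{*}(T')\oplus j_{!}(T'')$ is Proposition \ref{prop-st} and requires \emph{both} $i^{*}$ and $i^{!}$ to be exact; the paper says immediately after the corollary that no unified form of $T$ is known in general, and Example 5.1(1) (where $i^{!}$ is exact but $i^{*}$ is not) exhibits $i_{*}(T')\oplus j_{!}(T'')\neq T$, with $i_{*}(T')$ not even a summand of $P(\mathcal{T})$. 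Under hypothesis (1) there is no reason for your candidate to be Ext-projective on the $j$-side, since the Ext-adjunction for $j_{!}$ (Proposition \ref{prop-adjoint}(3)) needs $j_{!}$ exact, which is what hypothesis (2), not (1), provides. Also, $i_{*}(T')\in\mathcal{T}$ is automatic from $i^{*}i_{*}\cong 1$ and $j^{*}i_{*}=0$; the invariance $i_{*}i^{!}(\mathcal{T})\subseteq\mathcal{T}$ is needed elsewhere, namely for $i^{!}(\mathcal{T})=i^{*}(\mathcal{T})\subseteq\Gen T'$.

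Part of your sketch can be salvaged in case (1), but only with two ingredients you do not supply. The inclusion $\mathcal{T}\subseteq\Gen\bigl(i_{*}(T')\oplus j_{!}(T'')\bigr)$ does work, provided the epimorphism $i_{*}(T')^{n}\twoheadrightarrow i_{*}i^{*}(B)$ is lifted through $B\to i_{*}i^{*}(B)$, and the obstruction vanishes because $\Ext^{1}_{\Lambda}(i_{*}(T'),K)\cong\Ext^{1}_{\Lambda'}(T',i^{!}(K))=0$ for $K$ the image of $j_{!}j^{*}(B)\to B$ (Proposition \ref{prop-adjoint}(2) for $i^{!}$ exact, plus Lemma \ref{lem-3.1}(1) and Ext-projectivity of $T'$ in $\Gen T'$) --- this Ext-vanishing, the only place the hypotheses genuinely enter, is exactly what your ``gluing of epimorphisms'' glosses over. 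Even then you cannot ``conclude via the AIR bijection'': knowing $\mathcal{T}=\Gen M$ for some module $M$ yields functorial finiteness only via Smal\o's theorem \cite{SSO84T}, after which Remark \ref{bijection} gives $T=P(\mathcal{T})$, a module in general different from $M$. Finally, case (2) is not a mechanical dual of your case (1) construction: there $i^{*}$ (hence $j_{!}$) is exact while $i^{!}$ and $j_{*}$ need not be, so the argument has to be run on the torsionfree side (cogeneration and Ext-injectivity), which is precisely why the paper establishes functorial finiteness of $\mathcal{F}$, not of $\mathcal{T}$, in Theorem \ref{rec-func}(2).
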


On the other hand,

\begin{theorem}
Let $T$ be a support $\tau$-tilting $\Lambda$-module and $(\mathcal{T},\mathcal{F}):=(\Gen T,T^{\perp_{0}})$ a torsion pair induced by $T$ in $\modu \Lambda$.
\begin{itemize}
\item[(1)] If $i^{*}$ has a left adjoint  or $i^{!}$ has a right adjoint, then there is a support $\tau$-tilting module $T'$ in $\modu \Lambda'$ such that $(\Gen T',T'^{\perp_{0}})=(i^{*}(\mathcal{T}),i^{!}(\mathcal{F}))$.
\item[(2)]
\begin{itemize}
\item[(a)] If $j_{*}j^{*}(\mathcal{F})\subseteq\mathcal{F}$, then there is a support $\tau$-tilting module $T''$ in $\modu \Lambda''$ such that $(\Gen T'',T''^{\perp_{0}})=(j^{*}(\mathcal{T}),j^{*}(\mathcal{F}))$.
\item[(b)] If $j_{*}$ is exact and $j_{*}j^{*}(\mathcal{T})\subseteq\mathcal{T}$, then there is a support $\tau$-tilting module $T''$ in $\modu \Lambda''$ with respect to $j^{*}(\mathcal{T})$.
     Moreover, $j_{*}j^{*}(\mathcal{F})\subseteq\mathcal{F}$ if and only if $\Gen T''=j^{*}(\mathcal{T})$ and $T''^{\perp_{0}}=j^{*}(\mathcal{F})$.
\item[(c)] If $j_{!}$ is exact and $j_{!}j^{*}(\mathcal{F})\subseteq\mathcal{F}$, then there is a support $\tau$-tilting module $T''$ in $\modu \Lambda''$ with respect to $j^{*}(\mathcal{F})$. Moreover, $j_{*}j^{*}(\mathcal{F})\subseteq\mathcal{F}$ if and only if $\Gen T''=j^{*}(\mathcal{T})$ and $T''^{\perp_{0}}=j^{*}(\mathcal{F})$.
\end{itemize}
\end{itemize}
\end{theorem}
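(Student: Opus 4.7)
The plan is to reduce every assertion to the Adachi--Iyama--Reiten bijection between basic support $\tau$-tilting modules over an artin algebra $R$ and functorially finite torsion classes in $\modu R$. Since $T$ is support $\tau$-tilting, $\mathcal{T}=\Gen T$ is a functorially finite torsion class and $(\mathcal{T},\mathcal{F})$ is its associated torsion pair. To produce $T'$ or $T''$ in each part, it therefore suffices to (a) exhibit the claimed pair as a genuine torsion pair in $\modu\Lambda'$ (resp.\ $\modu\Lambda''$), and then (b) show its torsion class is functorially finite; the bijection then produces the required support $\tau$-tilting module.

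For part (1), I would first observe that either hypothesis---$i^{*}$ admitting a left adjoint or $i^{!}$ admitting a right adjoint---upgrades the recollement so that both $i^{*}$ and $i^{!}$ become exact. Exactness of $i^{*}$ gives closure of $i^{*}(\mathcal{T})$ under quotients, and exactness of $i^{!}$ gives closure of $i^{!}(\mathcal{F})$ under subobjects; closure under extensions in each case, together with the vanishing of $\Hom_{\Lambda'}(i^{*}(\mathcal{T}),i^{!}(\mathcal{F}))$, follows by transporting the torsion-pair properties in $\modu\Lambda$ across the adjunctions $(i^{*},i_{*})$ and $(i_{*},i^{!})$ (this is essentially the restriction direction of the gluing result of Ma--Huang). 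For functorial finiteness, given $M'\in\modu\Lambda'$ I would take a $\mathcal{T}$-approximation of $i_{*}(M')\in\modu\Lambda$ and apply $i^{*}$ (or $i^{!}$); the adjunction then converts it into the desired $i^{*}(\mathcal{T})$-approximation of $M'\cong i^{*}i_{*}(M')$.

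For part (2), I use that $j^{*}$ is always exact in a recollement of abelian categories. For (2a), the hypothesis $j_{*}j^{*}(\mathcal{F})\subseteq\mathcal{F}$ is precisely what is needed to check that $(j^{*}(\mathcal{T}),j^{*}(\mathcal{F}))$ is a torsion pair in $\modu\Lambda''$: exactness of $j^{*}$ handles closure under quotients and subobjects, while the compatibility hypothesis, together with the counit isomorphism $j^{*}j_{*}\cong\Id$, secures the Hom-orthogonality. Functorial finiteness is again obtained by lifting $M''$ to $j_{*}(M'')$, taking a $\mathcal{T}$-approximation in $\modu\Lambda$, and applying $j^{*}$. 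For (2b) and (2c), the exactness of $j_{*}$ or $j_{!}$ together with the stated stability condition lets me argue analogously, with lifting via $j_{*}$ or $j_{!}$ as appropriate. The \emph{moreover} clauses in (2b) and (2c) then amount to comparing the two torsion pairs $(\Gen T'',\mathcal{F}(T''))$ and $(j^{*}(\mathcal{T}),j^{*}(\mathcal{F}))$ inside $\modu\Lambda''$: equality in one component forces equality in the other since both are torsion pairs, and the hypothesis $j_{*}j^{*}(\mathcal{F})\subseteq\mathcal{F}$ is precisely what equates the torsionfree parts under the $(j^{*},j_{*})$-adjunction.

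The main obstacle will be step (b), the functorial finiteness. Constructing left (or right) approximations in $\modu\Lambda'$ or $\modu\Lambda''$ from those in $\modu\Lambda$ requires the right exactness properties of the adjoint functors plus the compatibility conditions given in each hypothesis; in particular, one must verify that applying $i^{*}$, $i^{!}$ or $j^{*}$ to an approximation sequence in $\modu\Lambda$ really yields an approximation sequence in the target category, and that the resulting error term actually lies in the putative torsionfree (resp.\ torsion) class. This is where the stated conditions such as $j_{*}j^{*}(\mathcal{T})\subseteq\mathcal{T}$ or $j_{!}j^{*}(\mathcal{F})\subseteq\mathcal{F}$ will do the real work, in conjunction with the unit/counit identities characteristic of a recollement.
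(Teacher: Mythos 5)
Your overall strategy coincides with the paper's: reduce everything to the Adachi--Iyama--Reiten bijection, check that the relevant pairs are torsion pairs (the restriction results of Ma--Huang) and that the relevant classes are functorially finite by transporting approximations along adjunctions. However, part (1) of your argument has a genuine gap, in two places. First, the claim that either hypothesis makes \emph{both} $i^{*}$ and $i^{!}$ exact is false: a left adjoint of $i^{*}$ forces only $i^{*}$ to be exact (it is already right exact and now preserves kernels), and a right adjoint of $i^{!}$ forces only $i^{!}$ to be exact; neither condition says anything about the other functor (in the triangular matrix example of Section 5, $i^{!}$ is exact while $i^{*}$ is not). Note also that $(i^{*}(\mathcal{T}),i^{!}(\mathcal{F}))$ is a torsion pair in $\modu\Lambda'$ with \emph{no} hypothesis at all (this is the quoted Ma--Huang result), so the hypotheses of (1) are needed only for functorial finiteness. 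Second, and more seriously, your functorial finiteness step lifts $M'$ to $i_{*}(M')$, takes a left $\mathcal{T}$-approximation there and applies $i^{*}$. The factorization property of a left $i^{*}(\mathcal{T})$-approximation requires an isomorphism $\Hom_{\Lambda'}(M',i^{*}(X_{0}))\cong\Hom_{\Lambda}(L(M'),X_{0})$ for a \emph{left} adjoint $L$ of $i^{*}$; with $L=i_{*}$ this fails, since $i_{*}$ is the right adjoint of $i^{*}$ and the comparison map $\Hom_{\Lambda}(i_{*}(M'),X_{0})\to\Hom_{\Lambda'}(M',i^{*}(X_{0}))$ need not be surjective. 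The whole point of the hypotheses in (1) is to supply exactly this missing adjoint: the paper's Lemmas 3.4 and 3.5 transport left (resp.\ right) approximations through a functor precisely when it has a left (resp.\ right) adjoint, and one applies them with the hypothesized left adjoint of $i^{*}$ (resp.\ right adjoint of $i^{!}$), the other finiteness direction being automatic for a torsion (resp.\ torsionfree) class. As written, your proof of (1) does not use the hypothesis where it is needed and the approximation step does not go through. (If you want to exploit exactness of $i^{*}$ instead, the correct route is Smal{\o}'s characterization: $i^{*}$ exact gives $i^{*}(\mathcal{T})=\Gen i^{*}(T)$, hence functorially finite; but that is not the argument you gave, and it does not cover the case where only $i^{!}$ has a right adjoint by your reasoning.)

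The same misidentification occurs, more mildly, in (2)(a): covariant finiteness of $j^{*}(\mathcal{T})$ must be obtained by lifting along the left adjoint $j_{!}$ of $j^{*}$, not along $j_{*}$; lifting along $j_{*}$ only yields contravariant finiteness, which is automatic for a torsion class. Since both $j_{!}$ and $j_{*}$ exist in any recollement this is easily repaired, and the rest of part (2) is in substance the paper's argument: the hypothesis $j_{*}j^{*}(\mathcal{F})\subseteq\mathcal{F}$ yields the torsion pair $(j^{*}(\mathcal{T}),j^{*}(\mathcal{F}))$ via the $(j^{*},j_{*})$-adjunction, in (2)(b)--(c) one checks directly that $j^{*}(\mathcal{T})$ (resp.\ $j^{*}(\mathcal{F})$) is a torsion (resp.\ torsionfree) class using exactness of $j_{*}$ (resp.\ $j_{!}$) and the stability condition, and the ``moreover'' equivalences compare the two torsion pairs exactly as you indicate, with the converse direction coming from the adjunction computation $\Hom_{\Lambda}(X,j_{*}j^{*}(Y))\cong\Hom_{\Lambda''}(j^{*}(X),j^{*}(Y))$.
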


The above results describe methods to construct support $\tau$-tilting modules in a recollement.
As a special case for support $\tau$-tilting modules, we can construct $\tau$-tilting modules in a recollement by this way.
This paper is organized as follows.

In Section \ref{pre}, we give some terminologies and some preliminary results.

In Section \ref{tor}, we first study the construction of functorially finite torsion classes (resp. functorially finite torsionfree classes) in a recollement of abelian categories.  Under certain conditions, we give an explicit construction of  functorially finite torsion classes (resp. functorially finite torsionfree classes) in $\mathcal{B}$ from functorially finite torsion classes (resp. functorially finite torsionfree classes) in $\mathcal{A}$ and $\mathcal{C}$ respectively.
Using a relation between functorially finite torsion classes and support $\tau$-tilting modules,  we show the existence of support $\tau$-tilting modules in $\modu \Lambda$ from support $\tau$-tilting modules in $\modu \Lambda'$ and $\modu \Lambda''$ respectively.
On the other hand, under certain conditions, we can use functorially finite torsion classes (resp. functorially finite torsionfree classes) in $\mathcal{B}$ to induce functorially finite torsion classes (resp. functorially finite torsionfree classes) in $\mathcal{A}$ and $\mathcal{C}$. By the same reason, we show the existence of support $\tau$-tilting modules in $\modu \Lambda'$ and $\modu \Lambda''$ respectively from a support $\tau$-tilting module in $\modu \Lambda$.

In Section 4, we focus on $\tau$-tilting modules, which are special cases of support $\tau$-tilting modules. Under certain conditions, we give a construction of sincere functorially finite torsion classes in $\modu \Lambda$ from sincere functorially finite torsion classes in $\modu \Lambda'$ and $\modu \Lambda''$ respectively;
on the other hand, we show that sincere functorially finite torsion classes in $\mathcal{B}$ can induce sincere functorially finite torsion classes in $\mathcal{A}$ and $\mathcal{C}$.
Thus, as similar arguments to Section 3, we can obtain the corresponding results for $\tau$-tilting modules.

Finally, in Section 5, we give some examples to illustrate our obtained results.

Throughout this paper, unless otherwise stated, $\mathcal{A}$, $\mathcal{B}$ and $\mathcal{C}$ are abelian categories, and all subcategories are full, additive and closed under isomorphisms. All algebras are artin algebras and all modules are finitely generated left modules.

\section{Preliminaries}\label{pre}
We recall the notion of recollements of abelian categories.
\begin{definition}\label{def-2.1}
{(\cite{VFTP04C}) A \emph{recollement}, denoted by ($\mathcal{A},\mathcal{B},\mathcal{C}$), of abelian categories is a diagram
$$\xymatrix{\mathcal{A}\ar[rr]!R|{i_{*}}&&\ar@<-2ex>[ll]!R|{i^{*}}\ar@<2ex>[ll]!R|{i^{!}}\mathcal{B}
\ar[rr]!L|{j^{*}}&&\ar@<-2ex>[ll]!L|{j_{!}}\ar@<2ex>[ll]!L|{j_{*}}\mathcal{C}}$$
of abelian categories and additive functors such that
\begin{enumerate}
\item[(1)] ($i^{*},i_{*}$), ($i_{*},i^{!}$), ($j_{!},j^{*}$) and ($j^{*},j_{*}$) are adjoint pairs.
\item[(2)] $i_{*}$, $j_{!}$ and $j_{*}$ are fully faithful.
\item[(3)] $\Ima i_{*}=\Ker j^{*}$.
\end{enumerate}}
\end{definition}

We list some properties of recollements (see \cite{ VFTP04C,MXHZY,PC14H,PCSO14G,PCVJ14R}), which will be used in the sequel.

\begin{lemma}\label{lem-rec}
Let ($\mathcal{A},\mathcal{B},\mathcal{C}$) be a recollement of abelian categories.
\begin{enumerate}
\item[(1)] $i^{*}j_{!}=0= i^{!} j_{*}$.
\item[(2)] The functors $i_{*}$, $j^{*}$ are exact, and the functors $i^{*}$, $j_{!}$ are right exact, and the functors $i^{!}$, $j_{*}$ are left exact.
\item[(3)] The natural transformations $\xymatrix@C=15pt{i^{*}i_{*}\ar[r]&1_{\mathcal{A}}}$, $\xymatrix@C=15pt{1_{\mathcal{A}}\ar[r]&i^{!}i_{*}}$, $\xymatrix@C=15pt{1_{\mathcal{C}}\ar[r]&j^{*}j_{!}}$, and $\xymatrix@C=15pt{j^{*}j_{*}\ar[r]&1_{\mathcal{C}}}$ are natural isomorphisms. Moreover, the functors $i^{*}$, $i^{!}$ and $j^{*}$ are dense.
    \item[(4)] Let $B\in \mathcal{B}$. There exist exact sequences
$$\xymatrix@C=15pt{0\ar[r]&i_{*}(A)\ar[r]&j_{!}j^{*}(B)\ar[r]^-{\epsilon_{B}}&B\ar[r]&i_{*}i^{*}(B)\ar[r]&0,}$$
$$\xymatrix@C=15pt{0\ar[r]&i_{*}i^{!}(B)\ar[r]^-{\lambda_{B}}&B\ar[r]^-{\eta_{B}}&j_{*}j^{*}(B)\ar[r]&i_{*}(A')\ar[r]&0}$$
in $\mathcal{B}$ with $A,A'\in \mathcal{A}$.
\item[(5)] Let $B\in\mathcal{B}$. If $i^{*}$ is exact, then we have the following exact sequence
            $$\xymatrix@C=15pt{0\ar[r]&j_{!}j^{*}(B)\ar[r]^-{\epsilon_{B}}&
            B\ar[r]&i_{*}i^{*}(B)\ar[r]&0}.$$

If $i^{!}$ is exact, then we have the following exact sequence
            $$\xymatrix@C=15pt{0\ar[r]&i_{*}i^{!}(B)\ar[r]^-{\lambda_{B}}&B\ar[r]^-{\eta_{B}}&
            j_{*}j^{*}(B)\ar[r]&0}.$$
\item[(6)] If $i^{*}$ is exact, then $i^{!}j_{!}=0$; and if $i^{!}$ is exact, then $i^{*}j_{*}=0$.
\end{enumerate}
\end{lemma}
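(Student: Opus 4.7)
The plan is to derive all six parts from the formal structure of a recollement, i.e.\ the three adjoint triples, the fully faithfulness of $i_{*}$, $j_{!}$, $j_{*}$, and the equality $\Ima i_{*}=\Ker j^{*}$. Each statement then becomes essentially a calculation with units and counits of adjunctions, so the argument is mostly bookkeeping; nothing here needs a new idea beyond what the recollement axioms already provide. I will work through the items roughly in the order (1) $\Rightarrow$ (2) $\Rightarrow$ (3) $\Rightarrow$ (4) $\Rightarrow$ (5) $\Rightarrow$ (6), each relying on what came before.

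For (1), I would use the adjunction $(i^{*},i_{*})$ together with $j^{*}i_{*}=0$ (which is a restatement of $\Ima i_{*}\subseteq \Ker j^{*}$): for any $C\in\mathcal{C}$ and $A\in\mathcal{A}$, $\Hom_{\mathcal{A}}(i^{*}j_{!}C,A)\cong\Hom_{\mathcal{B}}(j_{!}C,i_{*}A)\cong\Hom_{\mathcal{C}}(C,j^{*}i_{*}A)=0$, whence $i^{*}j_{!}=0$; the vanishing $i^{!}j_{*}=0$ is dual. Part (2) is the standard observation that a functor with both a left and right adjoint is exact (so $i_{*}$ and $j^{*}$ are exact) while a left adjoint is right exact and a right adjoint is left exact. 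For (3), fully faithfulness of $i_{*}$ forces the unit $i^{*}i_{*}\to 1_{\mathcal{A}}$ (counit, really) and $1_{\mathcal{A}}\to i^{!}i_{*}$ to be isomorphisms, and analogously for $j_{!},j_{*}$; density of $i^{*}$, $i^{!}$, $j^{*}$ then follows since any object of $\mathcal{A}$ is isomorphic to $i^{*}i_{*}$ of itself, and similarly on the $\mathcal{C}$ side via $j^{*}$.

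For (4), I would start from the counit $\epsilon_{B}:j_{!}j^{*}(B)\to B$ and the unit $B\to i_{*}i^{*}(B)$ and splice them into a four-term sequence. Applying $j^{*}$ to $\Coker\epsilon_{B}\to i_{*}i^{*}(B)$ and using $j^{*}j_{!}\cong 1$, $j^{*}i_{*}=0$ gives that this map is a $j^{*}$-isomorphism and an epimorphism, hence an isomorphism after observing the target is in $\Ima i_{*}=\Ker j^{*}$; likewise $\Ker\epsilon_{B}$ is killed by $j^{*}$, so it lies in $\Ima i_{*}$, i.e.\ equals $i_{*}(A)$ for some $A$. Exactness in the middle is by construction. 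The second sequence is dual, using the unit $i_{*}i^{!}(B)\to B$ and the counit $B\to j_{*}j^{*}(B)$. For (5), if $i^{*}$ is exact I would apply $i^{*}$ to the first sequence of (4); using $i^{*}j_{!}=0$, $i^{*}i_{*}\cong 1$, and exactness of $i^{*}$ yields $i^{*}(A)=0$, and combined with $A\in\Ima i_{*}$ (so $A\cong i^{*}i_{*}(A)$ is irrelevant — rather, $A\cong i^{*}(i_{*}A)$ via the isomorphism in (3)) I conclude $A=0$; dually for $i^{!}$ exact.

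Finally, for (6), the plan is to feed (5) back into (4). Assuming $i^{*}$ is exact, take the short exact sequence $0\to j_{!}j^{*}(B)\to B\to i_{*}i^{*}(B)\to 0$ from (5) with $B=j_{*}(C)$; since $j^{*}j_{*}\cong 1$ this becomes $0\to j_{!}(C)\to j_{*}(C)\to i_{*}i^{*}j_{*}(C)\to 0$. Apply $i^{!}$ (left exact) and use $i^{!}j_{*}=0$ from (1) together with $i^{!}i_{*}\cong 1$ to read off $i^{!}j_{!}(C)=0$. The statement for $i^{*}j_{*}=0$ is dual. The main obstacle I anticipate is in (4): verifying carefully that the two bridging maps between $\Ker$ and $\Coker$ really identify the middle of the four-term sequence, but once the fact that objects annihilated by $j^{*}$ lie in $\Ima i_{*}$ is used systematically, everything drops out.
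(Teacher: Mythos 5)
Your handling of items (1), (2), (3), (5) and (6) is correct: the adjunction calculation for $i^*j_!=0=i^!j_*$, the standard exactness properties of left/right adjoints, the unit/counit characterisation of full faithfulness (plus density via $A\cong i^*i_*(A)$, etc.), and the deductions of (5) and (6) from (4) (applying the exact $i^*$ to kill $i_*(A)$, then specialising $B=j_*(C)$ or $B=j_!(C)$ and applying $i^!$ resp. $i^*$) all go through. Note that the paper itself offers no proof of this lemma; it is quoted from the literature (Franjou--Pirashvili, Psaroudakis, Psaroudakis--Vit\'oria, Ma--Huang), so the only issue is whether your derivation is complete on its own.

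It is not quite complete at the one point you flagged, item (4), and the repair needs a different functor than the one you propose. The substantive content of the first sequence is that the induced map $\phi\colon \Coker\epsilon_B\to i_*i^*(B)$ (which exists because $\Hom_{\mathcal{B}}(j_!j^*(B),i_*i^*(B))\cong\Hom_{\mathcal{C}}(j^*(B),j^*i_*i^*(B))=0$, so the unit factors through $\Coker\epsilon_B$) is an isomorphism; equivalently, the unit $B\to i_*i^*(B)$ is epic with kernel $\Ima\epsilon_B$. Your justification, that $\phi$ is ``a $j^*$-isomorphism and an epimorphism, hence an isomorphism,'' does not stand: both $\Coker\epsilon_B$ and $i_*i^*(B)$ lie in $\Ker j^*=\Ima i_*$, so applying $j^*$ to $\phi$ yields the map $0\to 0$ and carries no information (any non-invertible epimorphism between objects of $\Ima i_*$ is a ``$j^*$-isomorphism''), and the claim that $\phi$ is epic is precisely part of what must be proved, not a given. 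The correct tool here is $i^*$: apply the right exact $i^*$ to $j_!j^*(B)\to B\to\Coker\epsilon_B\to 0$ and use $i^*j_!=0$ to get $i^*(B)\cong i^*(\Coker\epsilon_B)$; since $j^*$-exactness and the invertibility of $j^*(\epsilon_B)$ give $\Coker\epsilon_B\in\Ker j^*=\Ima i_*$, the unit of $(i^*,i_*)$ is invertible on $\Coker\epsilon_B$, and naturality of the unit along $B\to\Coker\epsilon_B$ then shows $\phi$ is an isomorphism, which yields both exactness at $B$ and surjectivity onto $i_*i^*(B)$; your identification of $\Ker\epsilon_B$ with some $i_*(A)$ is fine, and the second four-term sequence is dual. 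With this correction the whole argument is sound.
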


Recall that a subcategory $\mathcal{D}$ of an abelian category $\mathcal{A}$ is called a \emph{Serre subcategory} if for any exact sequence $\xymatrix@C=15pt{0\ar[r]&A\ar[r]&B\ar[r]&C\ar[r]&0}$ in $\mathcal{A}$, $B$ is in $\mathcal{D}$ if and only if both $A$ and $C$ are in $\mathcal{D}$.

\begin{lemma}\label{lem-serre}
Let $(\mathcal{A},\mathcal{B},\mathcal{C})$ be a recollement of abelian categories.
\begin{itemize}
\item[(1)] $i_{*}(\mathcal{A})$ is a Serre subcategory of $\mathcal{B}$.
\item[(2)] If $i^{*}$ is exact, then $\Ker i^{*}=\Ima j_{!}$. Moreover,  $j_{!}(\mathcal{C})$ is a Serre subcategory of $\mathcal{B}$.
\item[(3)] If $i^{!}$ is exact, then $\Ker i^{!}=\Ima j_{*}$. Moreover,  $j_{*}(\mathcal{C})$ is a Serre subcategory of $\mathcal{B}$.
\end{itemize}
\end{lemma}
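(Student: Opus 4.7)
The plan is to reduce each of the three parts to the fact that the kernel of an exact functor between abelian categories is automatically a Serre subcategory, then use the exact sequences supplied by Lemma~\ref{lem-rec}(5) to identify the relevant images with such kernels.

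For part (1), I would simply observe that by Definition~\ref{def-2.1}(3) we have $i_{*}(\mathcal{A})=\Ima i_{*}=\Ker j^{*}$, and by Lemma~\ref{lem-rec}(2) the functor $j^{*}$ is exact. Given any short exact sequence $0\to A\to B\to C\to 0$ in $\mathcal{B}$, exactness of $j^{*}$ gives a short exact sequence $0\to j^{*}A\to j^{*}B\to j^{*}C\to 0$ in $\mathcal{C}$, from which $j^{*}B=0$ is clearly equivalent to $j^{*}A=0$ and $j^{*}C=0$. This is precisely the Serre condition.

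For part (2), under the hypothesis that $i^{*}$ is exact, I would first establish the set-theoretic identity $\Ker i^{*}=\Ima j_{!}$. The inclusion $\Ima j_{!}\subseteq \Ker i^{*}$ follows immediately from $i^{*}j_{!}=0$ (Lemma~\ref{lem-rec}(1)). For the reverse inclusion, take $B\in\mathcal{B}$ with $i^{*}(B)=0$ and invoke the first short exact sequence in Lemma~\ref{lem-rec}(5), namely $0\to j_{!}j^{*}(B)\to B\to i_{*}i^{*}(B)\to 0$; since $i_{*}i^{*}(B)=0$, this forces $B\cong j_{!}j^{*}(B)\in \Ima j_{!}$. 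Having identified $j_{!}(\mathcal{C})$ with $\Ker i^{*}$, the Serre property is inherited from the exact functor $i^{*}$ by the same argument as in part (1).

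Part (3) is strictly dual to part (2): I would use $i^{!}j_{*}=0$ (Lemma~\ref{lem-rec}(1)) together with the second short exact sequence in Lemma~\ref{lem-rec}(5), namely $0\to i_{*}i^{!}(B)\to B\to j_{*}j^{*}(B)\to 0$ (available because $i^{!}$ is exact), to conclude that $\Ker i^{!}=\Ima j_{*}$, after which the Serre property follows from exactness of $i^{!}$. There is no substantial obstacle here; the only point to be careful about is that the short exact sequences of Lemma~\ref{lem-rec}(5) really do require the exactness hypothesis on $i^{*}$ or $i^{!}$, without which the outer terms $i_{*}(A)$ and $i_{*}(A')$ from Lemma~\ref{lem-rec}(4) need not vanish and the argument collapses.
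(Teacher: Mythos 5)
Your proof is correct and takes essentially the same route as the paper: the inclusion $\Ima j_{!}\subseteq\Ker i^{*}$ from $i^{*}j_{!}=0$, the reverse inclusion from the short exact sequence of Lemma \ref{lem-rec}(5), and the Serre property read off from the identification with the kernel of an exact functor (dually for $i^{!}$ and $j_{*}$). The only difference is cosmetic: for part (1) the paper simply cites \cite[Proposition 2.8]{PCVJ14R}, whereas you spell out the underlying argument directly via $\Ima i_{*}=\Ker j^{*}$ and the exactness of $j^{*}$.
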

\begin{proof}
(1) Apply \cite[Proposition 2.8]{PCVJ14R}.

(2) The inclusion $\Ima j_{!}\subseteq\Ker i^{*}$ is a consequence of  Lemma \ref{lem-rec}(1).

Conversely, let $X$ be any object in $\Ker i^{*}$, that is, $i^{*}(X)=0$. Since $i^{*}$ is exact, by Lemma \ref{lem-rec}(5), there is an exact sequence
  $$\xymatrix@C=15pt{0\ar[r]&j_{!}j^{*}(X)\ar[r]^-{\epsilon_{X}}&
            X\ar[r]&i_{*}i^{*}(X)\ar[r]&0}$$
            in $\mathcal{B}$.
            Thus $X\cong  j_{!}j^{*}(X)\in\Ima j_{!}$.

            The second assertion is trivial.

(3) It is similar to (2).
  \end{proof}

\begin{lemma}\label{lem-2.6}
Let $(\mathcal{A},\mathcal{B},\mathcal{C})$ be a recollement of abelian categories.
\begin{itemize}
\item[(1)] If $i^{*}$ is exact, then $j_{!}$ is exact.
\item[(2)] If $i^{!}$ is exact, then $j_{*}$ is exact.
\end{itemize}
\end{lemma}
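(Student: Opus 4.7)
The plan is to use the descriptions of $\Ker j^{*}$, the isomorphisms $j^{*}j_{!}\cong 1_{\mathcal{C}}$ and $j^{*}j_{*}\cong 1_{\mathcal{C}}$, and the vanishings $i^{*}j_{!}=0$, $i^{!}j_{*}=0$ from Lemma \ref{lem-rec}, together with the Serre subcategory statements of Lemma \ref{lem-serre}. For part (1), I start from Lemma \ref{lem-rec}(2), which already gives that $j_{!}$ is right exact; the only thing left is that $j_{!}$ sends monomorphisms to monomorphisms.

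So let $f\colon X\hookrightarrow Y$ be a monomorphism in $\mathcal{C}$ and put $K:=\Ker j_{!}(f)\subseteq j_{!}(X)$. Applying the exact functor $j^{*}$ and using $j^{*}j_{!}\cong 1_{\mathcal{C}}$, I get $j^{*}(K)=\Ker f=0$. Hence $K\in\Ker j^{*}$; by Lemma \ref{lem-serre}(2) (which applies because $i^{*}$ is exact) we have $\Ker j^{*}=\Ima j_{!}$, but for the argument the slicker way is to use $\Ker j^{*}=\Ima i_{*}$, so $K\cong i_{*}(A)$ for some $A\in\mathcal{A}$. Now apply $i^{*}$ (exact by hypothesis) to the inclusion $K\hookrightarrow j_{!}(X)$: this gives a monomorphism $i^{*}(K)\hookrightarrow i^{*}j_{!}(X)$, and the right-hand side vanishes by Lemma \ref{lem-rec}(1). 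But $i^{*}(K)\cong i^{*}i_{*}(A)\cong A$ by Lemma \ref{lem-rec}(3), so $A=0$ and therefore $K=0$, proving that $j_{!}(f)$ is a monomorphism.

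Part (2) is the dual. From Lemma \ref{lem-rec}(2), $j_{*}$ is already left exact, so I only need to show it preserves epimorphisms. Given an epimorphism $g\colon Y\twoheadrightarrow Z$ in $\mathcal{C}$, set $C:=\Coker j_{*}(g)$. Apply the exact functor $j^{*}$ and use $j^{*}j_{*}\cong 1_{\mathcal{C}}$ to conclude $j^{*}(C)=\Coker g=0$, hence $C\cong i_{*}(A')$ for some $A'\in\mathcal{A}$ by Lemma \ref{lem-serre}(3). Now apply the exact functor $i^{!}$ to the epimorphism $j_{*}(Z)\twoheadrightarrow C$; this yields an epimorphism $i^{!}j_{*}(Z)\twoheadrightarrow i^{!}(C)$, and the left-hand side vanishes by Lemma \ref{lem-rec}(1), while $i^{!}(C)\cong i^{!}i_{*}(A')\cong A'$. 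So $A'=0$, $C=0$, and $j_{*}(g)$ is epic.

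The only mild subtlety, not really an obstacle, is making sure the exactness hypotheses on $i^{*}$ (resp.\ $i^{!}$) are actually used twice: once to land $K$ (resp.\ $C$) inside a Serre subcategory of the form $\Ima i_{*}$, and once to pull a monomorphism (resp.\ epimorphism) through $i^{*}$ (resp.\ $i^{!}$); both are needed to identify the resulting object in $\mathcal{A}$ as zero via the unit/counit isomorphisms of Lemma \ref{lem-rec}(3).
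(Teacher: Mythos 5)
Your argument is correct and is essentially the paper's own proof: both isolate the kernel (resp.\ cokernel) obstruction to exactness of $j_{!}$ (resp.\ $j_{*}$), kill it under the exact functor $j^{*}$ via $j^{*}j_{!}\cong 1_{\mathcal{C}}$ (resp.\ $j^{*}j_{*}\cong 1_{\mathcal{C}}$), place it in $\Ima i_{*}=\Ker j^{*}$, and then apply the exact functor $i^{*}$ (resp.\ $i^{!}$) together with $i^{*}j_{!}=0$ (resp.\ $i^{!}j_{*}=0$) and $i^{*}i_{*}\cong 1_{\mathcal{A}}\cong i^{!}i_{*}$ to conclude it vanishes. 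One small note: in part (2) the step ``$j^{*}(C)=0$ hence $C\cong i_{*}(A')$'' follows from the recollement axiom $\Ima i_{*}=\Ker j^{*}$ rather than from Lemma \ref{lem-serre}(3), but this does not affect the argument.
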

\begin{proof}
(1) Let
$$\xymatrix@C=15pt{0\ar[r]&X\ar[r]&Y\ar[r]&Z\ar[r]&0}$$
 be an exact sequence in $\mathcal{C}$. Since $j_{!}$ is right exact by Lemma \ref{lem-rec}(2), applying $j_{!}$ to the above exact sequence yields an exact sequence
\begin{align}\label{2.1}
\xymatrix@C=15pt{0\ar[r]&C\ar[r]&j_{!}(X)\ar[r]&j_{!}(Y)\ar[r]&j_{!}(Z)\ar[r]&0}
\end{align}
in $\mathcal{B}$.
  Since $j^{*}$ is exact and $j^{*}j_{!}\cong 1_{\mathcal{   C}}$ by Lemma \ref{lem-rec}(2) and (3), applying $j^{*}$ to the above exact sequence yields $j^{*}(C)=0$. Since $\Ima i_{*}=\Ker j^{*}$, there exists an object $C'$ in $\mathcal{A}$ such that $C\cong i_{*}(C')$.
Since $i^{*}$ is exact (by assumption) and $i^{*}j_{!}=0$ (by Lemma \ref{lem-rec}(1)), applying $i^{*}$ to the exact sequence (\ref{2.1}) yields $i^{*}(C)=0$. It follows that $C'\cong i^{*}i_{*}(C')\cong i^{*}(C)=0$ and $C=0$. Thus $j_{!}$ is exact.

(2) It is similar to (1).
\end{proof}

Let $\mathcal{A}$ be an abelian category with enough projective objects and injective objects. We denote by $\proj \mathcal{A}$ (resp. $\inj \mathcal{A}$) the subcategory of $\mathcal{A}$ consisting of all projective (resp. injective) objects in $\mathcal{A}$.

Let $\mathcal{D}$ be a class of objects in $\mathcal{A}$. We denote by $\add \mathcal{D}$ the subcategory of $\mathcal{A}$ consisting of direct summands of finite direct sums of objects in $\mathcal{D}$. In particular, if $\mathcal{D}$ has only a single object $D$, we denote it by $\add D$.

We need the following easy and useful observations.

\begin{proposition}\label{C-enough}
Let $(\mathcal{A},\mathcal{B},\mathcal{C})$ be a recollement of abelian categories.
\begin{itemize}
\item[(1)] If $\mathcal{B}$ has enough projective objects, then the functor $\xymatrix@C=15pt{i^{*}: \mathcal{B}\ar[r]&\mathcal{A}}$ preserves projective objects. In this case,  $\mathcal{A}$ has enough projective objects and $\proj \mathcal{A}=\add (i^{*}(\proj \mathcal{B}))$. Dually, if $\mathcal{B}$ has enough injective objects, then the functor $\xymatrix@C=15pt{i^{!}: \mathcal{B}\ar[r]&\mathcal{A}}$ preserves injective objects. In this case,  $\mathcal{A}$ has enough injective objects and $\inj \mathcal{A}=\add (i^{!}(\inj \mathcal{B}))$.
\item[(2)] If $\mathcal{C}$ has enough projective objects, then the functor $\xymatrix@C=15pt{j_{!}: \mathcal{C}\ar[r]&\mathcal{B}}$ preserves projective objects. Dually, if $\mathcal{C}$ has enough injective objects, then the functor $\xymatrix@C=15pt{j_{*}: \mathcal{C}\ar[r]&\mathcal{B}}$ preserves injective objects.
\item[(3)] If $j_{*}$ is exact and $\mathcal{B}$ has enough projective objects, then $j^{*}$ preserves projective objects. In this case,  $\mathcal{C}$ has enough projective objects and $\proj \mathcal{C}=\add (j^{*}(\proj \mathcal{B}))$.
\item[(4)] If $j_{!}$ is exact and $\mathcal{B}$ has enough injective objects, then $j^{*}$ preserves injective objects. In this case,  $\mathcal{C}$ has enough injective objects and $\inj \mathcal{C}=\add (j^{*}(\inj \mathcal{B}))$.
\end{itemize}
\end{proposition}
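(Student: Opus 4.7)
The plan is to exploit the following two general facts repeatedly: (a) if $F \dashv G$ and $G$ is exact, then $F$ preserves projectives; dually if $G$ is exact and $F \dashv G$ then $G$ preserves injectives (this is immediate by applying $\Hom$-adjunction to an epi, resp.\ a mono); and (b) if a functor $F : \mathcal{X} \to \mathcal{Y}$ is dense and right exact and $\mathcal{X}$ has enough projectives, then $\mathcal{Y}$ has enough projectives and every projective of $\mathcal{Y}$ lies in $\add(F(\proj\mathcal{X}))$. Fact (b) is proved by writing $Y \cong F(X)$ (density), taking a projective epi $P \twoheadrightarrow X$ in $\mathcal{X}$ and applying right exactness to get an epi $F(P) \twoheadrightarrow Y$ with $F(P)$ projective by (a); for the $\add$-equality, any projective in $\mathcal{Y}$ is then a split summand of such an $F(P)$.

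For part (1), the adjoint pair $(i^{*}, i_{*})$ together with exactness of $i_{*}$ (Lemma \ref{lem-rec}(2)) gives that $i^{*}$ preserves projectives by (a). Since $i^{*}$ is right exact (Lemma \ref{lem-rec}(2)) and dense (Lemma \ref{lem-rec}(3)), (b) yields $\proj\mathcal{A} = \add(i^{*}(\proj\mathcal{B}))$ and that $\mathcal{A}$ has enough projectives. The injective statement for $i^{!}$ follows from the dual of (a)/(b) applied to the pair $(i_{*}, i^{!})$, using that $i_{*}$ is exact, $i^{!}$ is left exact, and $i^{!}$ is dense.

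For part (2), apply (a) directly: $(j_{!}, j^{*})$ is an adjoint pair with $j^{*}$ exact, so $j_{!}$ preserves projectives; dually $(j^{*}, j_{*})$ with $j^{*}$ exact shows $j_{*}$ preserves injectives. For parts (3) and (4) the extra hypothesis provides the missing exactness on the right of $j^{*}$: if $j_{*}$ is exact then the pair $(j^{*}, j_{*})$ gives that $j^{*}$ preserves projectives by (a), and since $j^{*}$ is exact and dense we apply (b) to get $\proj\mathcal{C} = \add(j^{*}(\proj\mathcal{B}))$ and enough projectives in $\mathcal{C}$. Part (4) is the exact dual, using that exactness of $j_{!}$ together with $(j_{!}, j^{*})$ makes $j^{*}$ preserve injectives.

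There is no real obstacle here; the only point requiring care is to verify the $\add$-equalities rather than mere existence of enough projectives/injectives, but this is handled uniformly by the splitting-of-epis argument in (b). All hypotheses needed are already supplied by Lemma \ref{lem-rec}, so the proof is essentially a bookkeeping exercise in adjoint functors.
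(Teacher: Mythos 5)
Your argument is correct and complete. For comparison: the paper does not actually write out a proof of this proposition — it disposes of (1) and (2) by citing \cite[Remark 2.5]{PC14H} and declares (3) and (4) ``similar'' — so what you have done is supply the standard adjunction argument that underlies that citation: a functor with an exact right adjoint preserves projective objects (dually for injectives), and a dense, right exact functor that preserves projectives transports ``enough projectives'' from the source to the target, with $\proj=\add(F(\proj))$ obtained by splitting the induced epimorphism onto a projective object. This uses exactly the exactness and density facts recorded in Lemma \ref{lem-rec}, so your proof is the expected one, just made explicit rather than outsourced to the literature. One small bookkeeping point: as literally stated, your fact (b) omits the hypothesis that $F$ preserves projectives — density and right exactness alone do not make $F(P)$ projective, and indeed your proof of (b) already invokes (a) for this. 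You should state (b) with that extra hypothesis; in every application it is supplied by (a) through the exact right adjoint ($i_{*}$ in part (1), $j_{*}$ in part (3), and dually for the injective statements), so with that adjustment nothing else changes.
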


\begin{proof}
The assertions (1) and (2) follow from \cite[Remark 2.5]{PC14H}. The assertions (3) and (4) are similar to (1) and (2).
\end{proof}

\begin{proposition}\label{prop-exact}
Let $(\mathcal{A},\mathcal{B},\mathcal{C})$ be a recollement of abelian categories.
\begin{itemize}
\item[(1)] Assume that $\mathcal{A}$ and $\mathcal{B}$ have enough injective objects. If $i^{*}$ is exact, then $i_{*}$ and $j^{*}$ preserve injective objects.
\item[(2)] Assume that $\mathcal{A}$ and $\mathcal{B}$ have enough projective objects. If $i^{!}$ is exact, then $i_{*}$ and $j^{*}$ preserve projective objects.
\end{itemize}
\end{proposition}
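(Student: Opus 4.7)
My plan is to reduce the entire statement to the standard categorical fact that in an adjoint pair $(F,G)$ of additive functors between abelian categories, exactness of $F$ forces its right adjoint $G$ to preserve injective objects, and dually exactness of $G$ forces its left adjoint $F$ to preserve projective objects. The first implication is immediate from the isomorphism $\Hom(-,G(I))\cong\Hom(F(-),I)$: when $I$ is injective and $F$ is exact, the right-hand side is exact in the first variable, so $G(I)$ is injective. The projective assertion follows symmetrically from $\Hom(F(P),-)\cong\Hom(P,G(-))$. Neither direction actually requires the ambient categories to have enough injectives or projectives; the hypotheses in the proposition are present so that the statement interacts sensibly with the earlier structural results (e.g.\ Proposition~\ref{C-enough}).

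For part~(1), I would first apply this principle to the adjoint pair $(i^{*},i_{*})$: since $i^{*}$ is exact by hypothesis, its right adjoint $i_{*}$ preserves injectives. Then I would invoke Lemma~\ref{lem-2.6}(1) to upgrade the exactness of $i^{*}$ to the exactness of $j_{!}$; applying the principle once more to the adjoint pair $(j_{!},j^{*})$ yields that the right adjoint $j^{*}$ preserves injectives.

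For part~(2) the argument is entirely dual. Exactness of $i^{!}$ combined with the adjoint pair $(i_{*},i^{!})$ gives, via the projective version of the principle, that the left adjoint $i_{*}$ preserves projectives. Lemma~\ref{lem-2.6}(2) then supplies the exactness of $j_{*}$, and a final application of the principle to $(j^{*},j_{*})$ shows that the left adjoint $j^{*}$ preserves projectives.

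I do not anticipate any real obstacle: the proof is essentially a bookkeeping exercise that cleanly splits into four applications of the adjunction principle, two of which rely on the exactness upgrades already supplied by Lemma~\ref{lem-2.6}. The only point requiring care is to keep straight which functor in each adjoint pair is the left adjoint and which is the right adjoint, so that the correct version (injectives versus projectives) of the principle is invoked.
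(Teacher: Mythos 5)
Your proof is correct and follows essentially the same route as the paper: the paper likewise derives that $i_{*}$ preserves injectives (resp.\ projectives) from the exactness of its adjoint partner $i^{*}$ (resp.\ $i^{!}$), as in Proposition~\ref{C-enough}, and then uses Lemma~\ref{lem-2.6} to get exactness of $j_{!}$ (resp.\ $j_{*}$) and applies the same adjunction principle, via Proposition~\ref{C-enough}(3)--(4), to conclude for $j^{*}$. Your side remark that the enough-injectives/projectives hypotheses are not needed for the adjunction argument itself is also accurate.
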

\begin{proof}
(1) The assertion that $i_{*}$ preserves injective objects is similar to Proposition \ref{C-enough}. Since $i^{*}$ is exact by assumption, $j_{!}$ is exact by Lemma \ref{lem-2.6}(1). Then by Proposition \ref{C-enough}(4),  $j^{*}$ preserves injective objects.

(2) It is similar to (1).
\end{proof}

\begin{lemma}{\rm(\cite[Lemma 3.10]{LM17G})}\label{lem-adjoint}
Let $\mathcal{A}$ and $\mathcal{B}$ be abelian categories with enough projective objects. If $\xymatrix@C=15pt{F: \mathcal{A}\ar[r]&\mathcal{B}}$ is an exact functor admitting a right adjoint $G$, and if $F$ preserves projective objects, then  $$\Ext^{i}_{\mathcal{B}}(F(X),Y)\cong \Ext_{\mathcal{A}}^{i}(X,G(Y))$$
for any $X\in \mathcal{A}$ and $Y\in\mathcal{B}$, and any $i\geq 1$.
\end{lemma}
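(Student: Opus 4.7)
The plan is to compute both sides as cohomology of $\Hom$-complexes arising from a single projective resolution in $\mathcal{A}$, and then invoke the adjunction $(F,G)$ to identify the two complexes. The hypotheses have been tailored precisely so that this strategy works without modification.

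First, since $\mathcal{A}$ has enough projective objects, I would fix a projective resolution
$$\xymatrix@C=15pt{\cdots\ar[r]&P_{2}\ar[r]&P_{1}\ar[r]&P_{0}\ar[r]&X\ar[r]&0}$$
of $X$ in $\mathcal{A}$. Applying the exact functor $F$ yields an exact sequence in $\mathcal{B}$, and because $F$ preserves projective objects by hypothesis, $F(P_{\bullet})\to F(X)\to 0$ is a projective resolution of $F(X)$ in $\mathcal{B}$. Therefore $\Ext^{i}_{\mathcal{B}}(F(X),Y)$ is the $i$-th cohomology of the cochain complex $\Hom_{\mathcal{B}}(F(P_{\bullet}),Y)$, and $\Ext^{i}_{\mathcal{A}}(X,G(Y))$ is the $i$-th cohomology of $\Hom_{\mathcal{A}}(P_{\bullet},G(Y))$.

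Next, I would use the adjunction $(F,G)$, which gives a natural isomorphism
$$\Hom_{\mathcal{B}}(F(P_{n}),Y)\cong \Hom_{\mathcal{A}}(P_{n},G(Y))$$
for every $n\geq 0$. Naturality in $P_{n}$ promotes this pointwise isomorphism to an isomorphism of cochain complexes (the differentials on either side are induced by the differentials of $P_{\bullet}$ in a way compatible with the unit/counit of the adjunction). Taking cohomology in degree $i\geq 1$ then yields the desired isomorphism $\Ext^{i}_{\mathcal{B}}(F(X),Y)\cong \Ext^{i}_{\mathcal{A}}(X,G(Y))$.

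There is essentially no obstacle here; the only point requiring any care is verifying that the adjunction isomorphism is natural enough to commute with the differentials, which is formal once one writes the adjunction as $\Hom_{\mathcal{B}}(F(-),Y)\cong \Hom_{\mathcal{A}}(-,G(Y))$ as functors on $\mathcal{A}^{\mathrm{op}}$. The proof does not need the case $i=0$ (where the statement is just the adjunction itself), so restricting to $i\geq 1$ is only a matter of bookkeeping with the cohomology of the augmented complex.
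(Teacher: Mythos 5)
Your argument is correct and is the standard one: the paper does not prove this lemma itself but cites \cite[Lemma 3.10]{LM17G}, and the proof there proceeds exactly as you do, applying the exact, projective-preserving functor $F$ to a projective resolution of $X$ and using the natural adjunction isomorphism $\Hom_{\mathcal{B}}(F(-),Y)\cong\Hom_{\mathcal{A}}(-,G(Y))$ to identify the two $\Hom$-complexes before taking cohomology. No gaps.
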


\begin{proposition}\label{prop-adjoint}
Let $(\mathcal{A},\mathcal{B},\mathcal{C})$ be a recollement of abelian categories and $n$ any positive integer.
\begin{itemize}
\item[(1)] If $\mathcal{B}$ has enough projective objects and $i^{*}$ is exact, then $\Ext^{n}_{\mathcal{A}}(i^{*}(X),Y)\cong \Ext_{\mathcal{B}}^{n}(X,i_{*}(Y))$.
\item[(2)] If $\mathcal{A}$ has enough projective objects and $i^{!}$ is exact, then $\Ext^{n}_{\mathcal{B}}(i_{*}(X),Y)\cong \Ext_{\mathcal{A}}^{n}(X,i^{!}(Y))$.
\item[(3)] If $\mathcal{C}$ has enough projective objects and $j_{!}$ is exact, then $\Ext^{n}_{\mathcal{B}}(j_{!}(X),Y)\cong \Ext_{\mathcal{C}}^{n}(X,j^{*}(Y))$.
\item[(4)] If $\mathcal{B}$ has enough projective objects and $j_{*}$ is exact, then $\Ext^{n}_{\mathcal{C}}(j^{*}(X),Y)\cong \Ext_{\mathcal{B}}^{n}(X,j_{*}(Y))$.
\end{itemize}
\end{proposition}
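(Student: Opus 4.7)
The plan is to reduce each of the four statements to a direct application of Lemma~\ref{lem-adjoint}. Recall that lemma requires three ingredients for an adjunction $(F,G)$ with $F\colon\mathcal{X}\to\mathcal{Y}$: both categories have enough projectives, $F$ is exact, and $F$ preserves projectives. In each case I only need to identify the correct adjoint pair among $(i^{*},i_{*})$, $(i_{*},i^{!})$, $(j_{!},j^{*})$, $(j^{*},j_{*})$ and verify these three conditions using the results already assembled in Section~\ref{pre}.

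For part (1), take $F=i^{*}\colon\mathcal{B}\to\mathcal{A}$, $G=i_{*}$. Exactness of $F$ is the hypothesis; that $\mathcal{A}$ inherits enough projectives from $\mathcal{B}$ and that $i^{*}$ preserves projective objects is Proposition~\ref{C-enough}(1). For part (2), take $F=i_{*}\colon\mathcal{A}\to\mathcal{B}$, $G=i^{!}$. The exactness of $i_{*}$ is Lemma~\ref{lem-rec}(2); the preservation of projective objects follows from Proposition~\ref{prop-exact}(2) under the assumption that $i^{!}$ is exact (this proposition needs enough projectives in $\mathcal{B}$ as well, which is the mild extra hypothesis implicit in writing down Ext in $\mathcal{B}$). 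For part (3), take $F=j_{!}\colon\mathcal{C}\to\mathcal{B}$, $G=j^{*}$. Here exactness of $j_{!}$ is assumed, and that $j_{!}$ preserves projective objects is Proposition~\ref{C-enough}(2) using enough projectives in $\mathcal{C}$. For part (4), take $F=j^{*}\colon\mathcal{B}\to\mathcal{C}$, $G=j_{*}$. Exactness of $j^{*}$ is Lemma~\ref{lem-rec}(2); that $j^{*}$ preserves projective objects and that $\mathcal{C}$ inherits enough projectives from $\mathcal{B}$ is Proposition~\ref{C-enough}(3) (using the assumption that $j_{*}$ is exact).

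With the hypotheses of Lemma~\ref{lem-adjoint} in place, the natural isomorphism
\[
\Ext^{i}_{\mathcal{Y}}(F(X),Y)\cong\Ext^{i}_{\mathcal{X}}(X,G(Y))
\]
is immediate for each $i\geq 1$, yielding the four claimed isomorphisms.

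I expect no real obstacle beyond bookkeeping: the four cases are symmetric and each reduces to quoting one preservation statement from Proposition~\ref{C-enough} or Proposition~\ref{prop-exact} together with the exactness assertions in Lemma~\ref{lem-rec}(2). The only subtle point is to make sure that whenever Lemma~\ref{lem-adjoint} is invoked we actually have enough projectives in \emph{both} categories of the adjunction, which is where the hypotheses ``$\mathcal{B}$ (or $\mathcal{A}$, or $\mathcal{C}$) has enough projective objects'' in the statement of the proposition, combined with the preservation/enough-projectives part of Propositions~\ref{C-enough} and~\ref{prop-exact}, are being used.
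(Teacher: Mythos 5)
Your proof is correct and is essentially the paper's own argument: the paper simply cites Propositions \ref{C-enough} and \ref{prop-exact} together with Lemma \ref{lem-adjoint}, and you have spelled out the same four reductions with the right adjoint pairs and preservation statements. Your remark that Lemma \ref{lem-adjoint} silently requires enough projectives in both categories (e.g.\ in $\mathcal{B}$ for parts (2) and (3)) is a fair observation about a hypothesis the paper leaves implicit, and it is harmless in the intended application to module categories.
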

\begin{proof}
Apply Propositions \ref{C-enough} and \ref{prop-exact}, and Lemma \ref{lem-adjoint}.
\end{proof}

Now we recall the following notions.
\begin{definition}
{(\cite{DSE66A}) A pair of subcategories $(\mathcal{T},\mathcal{F})$ of an abelian category $\mathcal{A}$ is called a \emph{torsion  pair}
if the following conditions are satisfied.
\begin{itemize}
\item[(1)] $\Hom_{\mathcal{A}}(\mathcal{T},\mathcal{F})=0$; that is, $\Hom_{\mathcal{A}}(X,Y)=0$
for any $X\in\mathcal{T}$ and $Y\in\mathcal{F}$.
\item[(2)] For any object $A\in \mathcal{A}$, there exists an exact sequence
\begin{align*}
\xymatrix@C=15pt{0\ar[r]&X\ar[r]&A\ar[r]&Y\ar[r]&0}
\end{align*}
in $\mathcal{A}$ with $X\in \mathcal{T}$ and $Y\in \mathcal{F}$.
\end{itemize}
In this case, $\mathcal{T}$ is called a \emph{torsion class} in $\mathcal{A}$ and $\mathcal{F}$ is called a \emph{torsionfree class} in $\mathcal{A}$.}
\end{definition}

Let $(\mathcal{T},\mathcal{F})$ be a torsion pair in an abelian category $\mathcal{A}$. Then
\begin{itemize}
\item[(1)] $\mathcal{T}$ is closed under extensions and quotient objects, and is contravariantly finite.
\item[(2)] $\mathcal{F}$ is closed under extensions and subobjects, and is covariantly finite.
\end{itemize}
Moreover,
$$\mathcal{T}={^{\perp_{0}}\mathcal{F}}:=\{M\in\mathcal{A}\mid\Hom_{\mathcal{A}}(M,\mathcal{F})=0\},$$
$$\mathcal{F}={\mathcal{T}^{\perp_{0}}}:=\{M\in\mathcal{A}\mid \Hom_{\mathcal{A}}(\mathcal{T},M)=0\}.$$

Let $\Lambda$ be an artin algebra.
A subcategory $\mathcal{T}$ of $\modu \Lambda$ is a torsion class (resp. torsionfree class) if and only if it is closed under quotient modules (resp. submodules) and extensions. Conversely, any torsion class $\mathcal{T}$ can give rise to a torsion pair $(\mathcal{T},\mathcal{F})$ in $\modu \Lambda$ (see \cite[Proposition VI.1.4]{AISD}).

Now,
let $T$ be a $\Lambda$-module and $\mathcal{D}$ a subcategory of $\modu \Lambda$.

We denote by $\rad T$ the Jacobson radical of $T$ and $|T|$ the number of pairwise nonisomorphic indecomposable direct summands of $T$. In particular, $|\Lambda|$ denotes the number of pairwise nonisomorphic simple modules in $\modu \Lambda$.
We call $T$ an \emph{$\Ext$-projective} object in $\mathcal{D}$ if $\Ext_{\Lambda}^{1}(T,D)=0$ for all $D\in\mathcal{D}$. We use $P(\mathcal{D})$ to denote the direct sum of one copy of each indecomposable $\Ext$-projective object in $\mathcal{D}$.
Taking a submodule chain $\xymatrix@C+15pt{0= T_{0}\subseteq T_{1}\subseteq T_{2}\subseteq \cdots \subseteq T_{n-2}\subseteq T_{n-1}\subseteq T_{n}=T}$ of $T$,
if the modules $T_{i+1}/T_{i}$ are simple for $0\leq i\leq n-1$, then the submodule chain is called a\emph{ composition series} of $T$ and the simple modules $\{T_{i+1}/T_{i}\}_{0\leq i\leq n-1}$ are called \emph{composition factors} of $T$.
Recall that a $\Lambda$-module $T$ is called \emph{sincere} if all simple $\Lambda$-modules appear as a composition factor of $T$.
Moreover,
a subcategory $\mathcal{D}$ is called \emph{sincere} if there exists a $\Lambda$-module $T$ in $\mathcal{D}$ such that $T$ is sincere.
We use $\Gen T$ to denote the class of all modules $M$ in $\modu \Lambda$ generated by $T$, that is,
 \begin{align*}
 \Gen T=\{M\in\modu \Lambda:\text{there exists a nonnegative integer $n$ and} \\ \text{an epimorphism }\xymatrix@C=15pt{T^{n}\ar[r]&M\ar[r]&0} \text{in} \modu \Lambda \}.
 \end{align*}
 The class $\Cogen T$ is defined dually.

We denote by $\tau$ the AR translation (see \cite{AISD} for the definition).

\begin{definition}{(\cite[Definition 0.1]{ATIO14t})
Let $\Lambda$ be an artin algebra and $T$ a $\Lambda$-module.
 \begin{itemize}
 \item[(1)] $T$ is called \emph{$\tau$-rigid} if $\Hom_{\Lambda}(T,\tau T)$=0.
     \item[(2)] $T$ is called \emph{$\tau$-tilting} (resp. \emph{almost complete $\tau$-tilting}) if $T$ is $\tau$-rigid and $|T|=|\Lambda|$ (resp. $|T|=|\Lambda|-1$).
     \item[(3)] $T$ is called \emph{support $\tau$-tilting} if there exists an idempotent $e$ of $\Lambda$ such that $T$ is a $\tau$-tilting $\Lambda/\langle e\rangle$-module.
     \end{itemize}
}
\end{definition}

Let $T$ be a (support) $\tau$-tilting $\Lambda$-module. Following \cite[Theorem 5.8]{AMSSO81A},  $T$ is $\Ext$-projective in $\Gen T$. So by \cite[Lemma VI.1.9]{AISD}, there is a torsion pair $(\Gen T,T^{\perp_{0}})$ in $\modu \Lambda$.

In this case, we say that a (support) $\tau$-tilting module $T$ induces a torsion pair $(\Gen T,T^{\perp_{0}})$ in $\modu \Lambda$.

We fix the following notations:

$\tau$-tilt $\Lambda$: the class of basic $\tau$-tilting modules in $\modu \Lambda$.

$s\tau$-tilt $\Lambda$: the class of basic support $\tau$-tilting modules in $\modu \Lambda$.

$f$-tors $\Lambda$: the class of functorially finite torsion classes in $\modu \Lambda$.

$f$-torsf $\Lambda$: the class of functorially finite torsionfree classes in $\modu \Lambda$.

$sf$-tors $\Lambda$: the class of sincere functorially finite torsion classes in $\modu \Lambda$.

\begin{remark}{(\cite[Theorem 2.7 and Corollary 2.8]{ATIO14t},\cite{SSO84T})}\label{bijection}
Let $\Lambda$ be an artin algebra. Then

\begin{itemize}
 \item[(1)] There are bijections between
 \begin{itemize}
 \item[(a)] the class $f$-\emph{tors} $\Lambda$ of functorially finite torsion classes in $\modu \Lambda$,
    \item[(b)] the class $f$-\emph{torsf} $\Lambda$ of functorially finite torsionfree classes in $\modu \Lambda$,
        \item[(c)] the class $s\tau$-\emph{tilt} $\Lambda$ of basic support $\tau$-tilting modules in $\modu \Lambda$,
            \end{itemize}
where the bijection between (a) and (c)
$$\xymatrix@C=15pt{s\tau\text{-}\emph{tilt}\ \Lambda\ar[r]&\ar[l]f\text{-}\emph{tors}}\Lambda$$
is given by $s\tau$-\emph{tilt} $\Lambda$ $\ni T$ $\longmapsto\Gen T\in f$-\emph{tors} $\Lambda$ and $f$-\emph{tors} $\Lambda$ $\ni \mathcal{T}$ $\longmapsto P(\mathcal{T})\in s\tau$-\emph{tilt} $\Lambda$.
In this case, let $\mathcal{T}$ (resp. $\mathcal{F})$ be a functorially finite torsion (resp. torsionfree) class. We call $T:=P(\mathcal{T})$ a support $\tau$-tilting module with respect to $\mathcal{T}$, moreover $\Gen T=\mathcal{T}$. On the other hand, let $T$ be any support $\tau$-tilting module,
we have  $\add P(\Gen T)=\add T$.
\item[(2)] By taking the same correspondence as above, one has a bijection
$$\xymatrix@C=15pt{\tau\text{-}\emph{tilt}\ \Lambda\ar[r]&\ar[l]sf\text{-}\emph{tors }\Lambda}.$$

\end{itemize}
\end{remark}

The following fact is known. For the reader's convenience, we list it and give the proof.
\begin{lemma}\label{lem-fullf}
Let $\Lambda'$ and $\Lambda$ be artin algebras, and let $\xymatrix@C=15pt{F: \modu \Lambda'\ar[r]&\modu \Lambda}$ be a fully faithful functor. Then
for any object $M$ in $\modu \Lambda'$,  $M$ is indecomposable if and only if $F(M)$ is indecomposable. More generally, $|F(M)|=|M|$.
\end{lemma}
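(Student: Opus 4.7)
The plan is to reduce both claims to statements about endomorphism rings via full faithfulness, and then to invoke Krull--Schmidt in $\modu\Lambda'$ and $\modu\Lambda$.

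First I would observe that a fully faithful functor $F$ between the abelian categories $\modu\Lambda'$ and $\modu\Lambda$ (arising in our setting from a recollement, hence additive) induces a ring isomorphism
\[
F_{M}\colon \End_{\Lambda'}(M)\xrightarrow{\;\cong\;}\End_{\Lambda}(F(M)),\qquad f\longmapsto F(f),
\]
for every $M\in\modu\Lambda'$. Since both $\Lambda'$ and $\Lambda$ are artin algebras, the endomorphism ring of every finitely generated module is semiperfect, so idempotents split and a module is indecomposable if and only if its endomorphism ring has no idempotents other than $0$ and $1$. The ring isomorphism $F_{M}$ puts idempotents of $\End_{\Lambda'}(M)$ in bijection with idempotents of $\End_{\Lambda}(F(M))$; in particular, it sends $0,1$ to $0,1$. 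Hence $\End_{\Lambda'}(M)$ has only trivial idempotents if and only if $\End_{\Lambda}(F(M))$ does, and this gives the first assertion: $M$ is indecomposable if and only if $F(M)$ is indecomposable.

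For the equality $|F(M)|=|M|$, I would use the Krull--Schmidt decomposition in $\modu\Lambda'$ to write $M\cong\bigoplus_{i=1}^{n}M_{i}^{a_{i}}$ with $M_{1},\dots,M_{n}$ pairwise non-isomorphic indecomposables, so $|M|=n$. Since $F$ is additive, $F(M)\cong\bigoplus_{i=1}^{n}F(M_{i})^{a_{i}}$, and by the first part each $F(M_{i})$ is indecomposable. It remains to check that the $F(M_{i})$ are pairwise non-isomorphic: but for any $i\neq j$, full faithfulness gives a bijection $\Hom_{\Lambda'}(M_{i},M_{j})\cong\Hom_{\Lambda}(F(M_{i}),F(M_{j}))$ compatible with composition, so an isomorphism $F(M_{i})\cong F(M_{j})$ would pull back to an isomorphism $M_{i}\cong M_{j}$, contradicting the choice of the $M_{i}$'s. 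Therefore $\bigoplus F(M_{i})^{a_{i}}$ is the Krull--Schmidt decomposition of $F(M)$, so $|F(M)|=n=|M|$.

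The only subtlety I anticipate is verifying that $F$ is additive (not automatic from fully faithfulness in complete generality). In the paper's applications $F$ is one of $i_{*}$, $j_{!}$, $j_{*}$, which are additive as part of the recollement data, so the argument applies directly. No further obstacle is expected, as the rest is a formal consequence of Krull--Schmidt for finitely generated modules over artin algebras.
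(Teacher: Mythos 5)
Your proposal is correct and follows essentially the same route as the paper: the paper also deduces indecomposability from the isomorphism $\End_{\Lambda'}(M)\cong\End_{\Lambda}(F(M))$ given by full faithfulness (citing the endomorphism-ring criterion from Auslander--Reiten--Smal\o) and then treats $|F(M)|=|M|$ as an immediate consequence, which you simply spell out via the Krull--Schmidt decomposition and the fact that full faithfulness preserves non-isomorphy of the indecomposable summands.
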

\begin{proof}
Let $M$ be a $\Lambda'$-module. Since $F$ is fully faithful, $\Hom_{\Lambda}(F(M),F(M))\cong \Hom_{\Lambda'}(M,M)$, and thus $M$ is indecomposable if and only if $F(M)$ is indecomposable by \cite[Theorem II.2.2]{AMRISSO95R}. Now assume $M=\oplus_{i=1}^n M_i$ with each $M_i$ indecomposable. Then $F(M)=\oplus_{i=1}^n F(M_i)$. By the first assertion, each $F(M_i)$ is indecomposable.% $ \Hom_{\Lambda'}(M,M)=\oplus_{i=1}^n\oplus_{j=1}^n\Hom_{\Lambda'}(M_i,M_j)$. On the other hand, $F(M)=\oplus_{i=1}^n F(M_i)$, and hence $\Hom_{\Lambda}(F(M),F(M))=\oplus_{i=1}^n\oplus_{j=1}^n\Hom_{\Lambda}(F(M_i),F(M_j))$.
 Thus $|F(M)|=|M|$. % by the first assertion.
\end{proof}

Now, let $\Lambda'$, $\Lambda$ and $\Lambda''$ be artin algebras such that $(\modu \Lambda',\modu \Lambda,\modu \Lambda'')$ is a recollement of abelian categories:
$$\xymatrix{\modu \Lambda'\ar[rr]|-{i_{*}}&&\ar@<-2ex>[ll]|-{i^{*}}\ar@<2ex>[ll]|-{i^{!}}\modu \Lambda
\ar[rr]|-{j^{*}}&&\ar@<-2ex>[ll]|-{j_{!}}\ar@<2ex>[ll]|-{j_{*}}\modu \Lambda''}.$$

The following result gives the form of simple modules in a recollement, which plays an important role in the sequel.
\begin{lemma}\label{lem-simple}
 Let $S'$ and $S''$ be simple modules in $\modu \Lambda'$ and $\modu \Lambda''$ respectively. If $i^{!}$ $($resp. $i^{*})$ is exact, then $i_{*}(S')$ and $j_{*}(S'')$ $($resp. $i_{*}(S')$ and  $j_{!}(S''))$ are simple $\Lambda$-modules.
 In particular, $|\Lambda|=|\Lambda'|+|\Lambda''|$.
\end{lemma}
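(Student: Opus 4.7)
The plan is to leverage the Serre subcategory descriptions in Lemma~\ref{lem-serre} to transport subobjects back and forth along the embedding functors, and then to count simples via the short exact sequences of Lemma~\ref{lem-rec}(5).

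First I would verify that $i_*(S')$ is simple in $\modu\Lambda$; this uses only Lemma~\ref{lem-serre}(1), which holds unconditionally. Any nonzero subobject $\iota\colon M\hookrightarrow i_*(S')$ must lie in the Serre subcategory $i_*(\modu\Lambda')$, so $M\cong i_*(N)$ for some $N\in\modu\Lambda'$; the exactness and faithfulness of $i_*$ (equivalently $i^*i_*\cong\id$ from Lemma~\ref{lem-rec}(3)) identify $\iota$ with a monomorphism $N\hookrightarrow S'$, whence simplicity of $S'$ forces $N=S'$. For $j_*(S'')$ under the hypothesis $i^!$ exact, I would combine Lemma~\ref{lem-serre}(3) with the exactness of $j_*$ from Lemma~\ref{lem-2.6}(2): the Serre property realises any subobject as $j_*(N)$, and applying the exact functor $j^*$ to the inclusion $j_*(N)\hookrightarrow j_*(S'')$ yields, via $j^*j_*\cong\id$, a monomorphism $N\hookrightarrow S''$, again forcing $N=0$ or $N=S''$. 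The $j_!(S'')$ case under $i^*$ exact is entirely symmetric, invoking Lemma~\ref{lem-serre}(2) together with the exactness of $j_!$ from Lemma~\ref{lem-2.6}(1).

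For the equality $|\Lambda|=|\Lambda'|+|\Lambda''|$, I work under the assumption $i^!$ exact (the $i^*$-exact case is dual). Given any simple $S\in\modu\Lambda$, the exact sequence
\[0\longrightarrow i_*i^!(S)\longrightarrow S\longrightarrow j_*j^*(S)\longrightarrow 0\]
of Lemma~\ref{lem-rec}(5) forces $i_*i^!(S)$ to equal $0$ or $S$, so either $S\cong i_*i^!(S)$ or $S\cong j_*j^*(S)$. In the first case the argument of the previous paragraph shows that $i^!(S)$ is simple in $\modu\Lambda'$; in the second case the symmetric argument (using that $j_*$ is exact and fully faithful and that $j^*j_*\cong\id$) shows that $j^*(S)$ is simple in $\modu\Lambda''$. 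Combined with the first half of the lemma, this exhibits a bijection between isomorphism classes of simple $\Lambda$-modules and the disjoint union of simple $\Lambda'$- and simple $\Lambda''$-modules. Disjointness holds because $i_*(S')\in\Ker j^*$ while $j^*j_*(S'')\cong S''\neq 0$, and injectivity on each side is Lemma~\ref{lem-fullf}.

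The main obstacle will not be any individual step but careful bookkeeping of the single hypothesis ``$i^!$ exact'', which I use in three distinct ways — to make $j_*$ itself exact so that subobject information transports through it, to identify $j_*(\modu\Lambda'')$ as a Serre subcategory of $\modu\Lambda$, and to validate the right-hand short exact sequence of Lemma~\ref{lem-rec}(5). Once these three invocations are correctly distributed, the argument reduces to the familiar fact that simple objects of a Serre quotient lift uniquely, applied to the two Serre pieces $i_*(\modu\Lambda')$ and $j_*(\modu\Lambda'')$ of the recollement.
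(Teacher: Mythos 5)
Your proposal is correct and follows essentially the same route as the paper: both rest on the Serre subcategory statements of Lemma \ref{lem-serre}, the exactness of $j_*$ (resp. $j_!$) from Lemma \ref{lem-2.6}, and the short exact sequence of Lemma \ref{lem-rec}(5) to split a simple $\Lambda$-module into the $i_*$- or $j_*$-piece. The only difference is local: where you lift an arbitrary subobject through the Serre subcategory and use full faithfulness to force it to be $0$ or everything, the paper instead applies the Serre property to the radical sequence, deduces $\rad=0$ hence semisimplicity, and concludes simplicity from indecomposability (Lemma \ref{lem-fullf}) — your variant is slightly more direct but not a genuinely different argument.
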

\begin{proof}
We only prove the case that $i^{!}$ is exact; the other is similar.
Since $i_{*}$ and $j_{*}$ are fully faithful, by Lemma \ref{lem-fullf},  $M$ is indecomposable if and only if $i_{*}(M)$ is indecomposable for any $\Lambda'$-module $M$,
and $j_{*}(N)$ is indecomposable if and only if $N$ is indecomposable for any $\Lambda''$-module $N$.

Because $i_{*}(S')\in\modu \Lambda$, consider the following exact sequence
 $$\xymatrix@C=15pt{0\ar[r]&\rad i_{*}(S')\ar[r]&i_{*}(S')\ar[r]&i_{*}(S')/\rad i_{*}(S')\ar[r]&0}$$
 in $\modu \Lambda$. Since $i_{*}(\modu \Lambda')$ is a Serre subcategory of $\modu \Lambda$ by Lemma \ref{lem-serre}(1), there exist modules $A_{1}, A_{2}\in\modu \Lambda'$ such that $\rad i_{*}(S')\cong i_{*}(A_{1})$ and $i_{*}(S')/\rad i_{*}(S')\cong i_{*}(A_{2})$. Since $\xymatrix@C=15pt{i_{*}: \modu \Lambda'\ar[r]& i_{*}(\modu \Lambda')}$ is an equivalence, there exists an exact sequence
 $$\xymatrix@C=15pt{0\ar[r]&A_{1}\ar[r]&S'\ar[r]&A_{2}\ar[r]&0}$$
 in $\modu \Lambda'$.
Notice that either $A_{1}=0$ or $A_{1}=S'$. If $A_{1}=S'$, then $i_{*}(S')\cong \rad i_{*}(S')$, which is a contradiction. If $A_{1}=0$, then $\rad i_{*}(S')=0$, and so $i_{*}(S')$ is a semisimple $\Lambda$-module.
Moreover, since $i_{*}(S')$ is indecomposable by the fact that $S'$ is indecomposable,  $i_{*}(S')$ is a simple module in $\modu \Lambda$.

Since $i^{!}$ is exact by assumption,  $j_{*}(\modu \Lambda'')$ is a Serre subcategory of $\modu \Lambda$ by Lemma \ref{lem-serre}(3). As a similar argument to the above, $j_{*}(S'')$ is a simple module in $\modu \Lambda$ for any simple $\Lambda''$-module $S''$.
Note that $j_{*}(S'')$ is not isomorphic to $i_{*}(S')$ for simple modules $S''$ and $S'$ in $\modu \Lambda''$ and $\modu \Lambda'$ respectively. Otherwise, by Lemma \ref{lem-rec}(1) and (3), $0=i^{!}j_{*}(S'')=i^{!}i_{*}(S')\cong S'$. Thus $|\Lambda'|+|\Lambda''|\leq|\Lambda|$.

Conversely, let $S$ be a simple module in $\modu \Lambda$. Since $i^{!}$ is exact by assumption, by Lemma \ref{lem-rec}(5), there is an exact sequence
$$\xymatrix@C=15pt{0\ar[r]&i_{*}i^{!}(S)\ar[r]&S\ar[r]&j_{*}j^{*}(S)\ar[r]&0}$$
in $\modu \Lambda$. It follows that either $S\cong j_{*}j^{*}(S)$ or $S\cong i_{*}i^{!}(S)$. If $S\cong i_{*}i^{!}(S)$, then by Lemma \ref{lem-fullf},  $i^{!}(S)$ is indecomposable. Consider the following exact sequence
 $$\xymatrix@C=15pt{0\ar[r]&\rad i^{!}(S)\ar[r]&i^{!}(S)\ar[r]&i^{!}(S)/\rad i^{!}(S)\ar[r]&0}$$
in $\modu \Lambda'$. Since $i_{*}$ is exact by Lemma \ref{lem-rec}(2), applying $i_{*}$ to the above exact sequence yields an exact sequence
$$\xymatrix@C=15pt{0\ar[r]&i_{*}(\rad i^{!}(S))\ar[r]&S(\cong i_{*}i^{!}(S))\ar[r]&i_{*}(i^{!}(S)/\rad i^{!}(S))\ar[r]&0}$$
in $\modu \Lambda$.
It follows that either $i_{*}(\rad i^{!}(S))=0$ or $i_{*}(i^{!}(S)/\rad i^{!}(S))=0$.
 If $i_{*}(i^{!}(S)/\rad i^{!}(S))=0$, since $i_{*}$ is fully faithful,  $i^{!}(S)/\rad i^{!}(S)=0$, which is a contradiction. If $i_{*}(\rad i^{!}(S))=0$, then $\rad i^{!}(S)=0$, and so $i^{!}(S)$ is semisimple.
Notice that $i^{!}(S)$ is indecomposable, so $i^{!}(S)$ is simple.
The case for $S\cong j_{*}j^{*}(S)$ is similar. Thus $|\Lambda|\leq |\Lambda'|+|\Lambda''|$.
Therefore, $|\Lambda|= |\Lambda'|+|\Lambda''|$.
\end{proof}

\section{Support $\tau$-tilting modules in a recollement}\label{tor}
In this section, we mainly want to study how to construct support $\tau$-tilting modules in a recollement. By  Adachi-Iyama-Reiten's correspondence given in Remark \ref{bijection}, we only need to discuss the case of functorially finite torsion classes and functorially finite torsionfree classes. Thus the first step is to give the construction of torsion classes in a recollement of abelian categories, which has done by Ma and Huang, that is,

\begin{lemma}{\rm (\cite[Theorems 1 and 2]{MXHZY})}\label{3.1}
Let $(\mathcal{A},\mathcal{B},\mathcal{C})$ be a recollement of abelian categories, and let $(\mathcal{T}',\mathcal{F}')$ and $(\mathcal{T}'',\mathcal{F}'')$ be torsion pairs in $\mathcal{A}$ and $\mathcal{C}$
respectively. There is a torsion pair $(\mathcal{T},\mathcal{F})$ in $\mathcal{B}$ defined by
\begin{align*}
\mathcal{T}&:=\{B\in\mathcal{B}\mid i^{*}(B)\in\mathcal{T}'\ \text{and}\ j^{*}(B)\in\mathcal{T}''\},\\
\mathcal{F}&:=\{B\in\mathcal{B}\mid i^{!}(B)\in\mathcal{F}'\ \text{and}\ j^{*}(B)\in\mathcal{F}''\}.
\end{align*}
In this case, we say that $(\mathcal{T},\mathcal{F})$ is a {glued torsion pair} with respect to $(\mathcal{T'},\mathcal{F'})$ and $(\mathcal{T''},\mathcal{F''})$.

On the other hand, let $(\mathcal{T},\mathcal{F})$ be a torsion pair in $\mathcal{B}$. Then
\begin{itemize}
\item[(1)] $(i^{*}(\mathcal{T}),i^{!}(\mathcal{F}))$ is a torsion pair in $\mathcal{A}$.
\item[(2)] $j_{*}j^{*}(\mathcal{F})\subseteq\mathcal{F}$ if and only if $(j^{*}(\mathcal{T}),j^{*}(\mathcal{F}))$ is a torsion pair in $\mathcal{C}$.
\end{itemize}
\end{lemma}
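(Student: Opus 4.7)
My plan is to verify the torsion pair axioms (orthogonality and the short exact decomposition) for each of the three assertions, relying on the adjunctions of the recollement, the canonical four-term sequences of Lemma \ref{lem-rec}(4), and the Serre property of $i_{*}(\mathcal{A})$ from Lemma \ref{lem-serre}(1). The defining conditions on $\mathcal{T}$ and $\mathcal{F}$ separate cleanly into an $i$-part (governed by the torsion pair in $\mathcal{A}$) and a $j$-part (governed by that in $\mathcal{C}$), and my proof would exploit this separation throughout.

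For the first assertion, orthogonality $\Hom_{\mathcal{B}}(\mathcal{T},\mathcal{F})=0$ is the easier axiom: given $f\colon T\to F$ with $T\in\mathcal{T}$ and $F\in\mathcal{F}$, the second sequence of Lemma \ref{lem-rec}(4) displays $F$ with subobject $i_{*}i^{!}(F)$ and with a quotient embedding into $j_{*}j^{*}(F)$. The composite $T\to F\to j_{*}j^{*}(F)$ transposes across the $(j^{*},j_{*})$-adjunction to $j^{*}(T)\to j^{*}(F)$, which vanishes by the torsion pair in $\mathcal{C}$; so $f$ factors through $i_{*}i^{!}(F)$, and the $(i^{*},i_{*})$-adjunction then rewrites the resulting map as $i^{*}(T)\to i^{!}(F)$, which vanishes by the torsion pair in $\mathcal{A}$. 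The main obstacle is producing a torsion decomposition $0\to X\to B\to Y\to 0$ for an arbitrary $B\in\mathcal{B}$. I would build $X$ in two stages: first set $B_{1}:=\ker(B\to j_{*}j^{*}(B)\to j_{*}(V''))$, where $0\to U''\to j^{*}(B)\to V''\to 0$ is the decomposition in $\mathcal{C}$, so that exactness of $j^{*}$ yields $j^{*}(B_{1})\cong U''\in\mathcal{T}''$ and $j^{*}(B/B_{1})\hookrightarrow V''\in\mathcal{F}''$; then refine by decomposing $i^{!}(B/B_{1})$ in $\mathcal{A}$ and lifting the torsion part along $i_{*}i^{!}(B/B_{1})\hookrightarrow B/B_{1}$ back to $B$, tracking both $i^{*}$ (to keep $X\in\mathcal{T}$) and $i^{!}$ (to force $B/X\in\mathcal{F}$) through the four-term sequences of Lemma \ref{lem-rec}(4).

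Assertion (2) reduces to the Serre property. Given $A\in\mathcal{A}$, apply the torsion pair of $\mathcal{B}$ to $i_{*}(A)$ to obtain $0\to X\to i_{*}(A)\to Y\to 0$ with $X\in\mathcal{T}$ and $Y\in\mathcal{F}$; by Lemma \ref{lem-serre}(1) both $X$ and $Y$ lie in $i_{*}(\mathcal{A})$, and the quasi-inverse of the equivalence $i_{*}\colon\mathcal{A}\xrightarrow{\sim} i_{*}(\mathcal{A})$ produces the required sequence in $\mathcal{A}$ with torsion term $i^{*}(X)\in i^{*}(\mathcal{T})$ and torsionfree term $i^{!}(Y)\in i^{!}(\mathcal{F})$. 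Orthogonality follows from $\Hom_{\mathcal{A}}(i^{*}T,i^{!}F)\cong\Hom_{\mathcal{B}}(T,i_{*}i^{!}F)$ via the $(i^{*},i_{*})$-adjunction, the right hand side being zero because $i_{*}i^{!}F$ injects into $F$ and $\Hom_{\mathcal{B}}(T,-)$ is left exact.

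For assertion (3), both directions pivot on the adjunction isomorphism $\Hom_{\mathcal{C}}(j^{*}T,j^{*}F)\cong\Hom_{\mathcal{B}}(T,j_{*}j^{*}F)$. If $j_{*}j^{*}(\mathcal{F})\subseteq\mathcal{F}$, the right hand side vanishes, giving orthogonality of $(j^{*}\mathcal{T},j^{*}\mathcal{F})$; a decomposition of any $C\in\mathcal{C}$ is then obtained by applying the exact functor $j^{*}$ (Lemma \ref{lem-rec}(2)) to the torsion decomposition of $j_{!}(C)$ in $\mathcal{B}$ and invoking $j^{*}j_{!}\cong 1_{\mathcal{C}}$ from Lemma \ref{lem-rec}(3). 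Conversely, if $(j^{*}\mathcal{T},j^{*}\mathcal{F})$ is a torsion pair in $\mathcal{C}$, the same adjunction yields $\Hom_{\mathcal{B}}(T,j_{*}j^{*}F)=0$ for every $T\in\mathcal{T}$ and $F\in\mathcal{F}$, forcing $j_{*}j^{*}F\in\mathcal{T}^{\perp_{0}}=\mathcal{F}$.
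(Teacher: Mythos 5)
Your arguments for the two ``downward'' assertions and for the orthogonality half of the gluing statement are correct: the factorization of a morphism $T\to F$ through $i_{*}i^{!}(F)$ via the second sequence of Lemma \ref{lem-rec}(4) and the two adjunctions, the reduction of assertion (1) to the Serre property of $i_{*}(\mathcal{A})$ from Lemma \ref{lem-serre}(1), and the adjunction argument in assertion (2) all work. (Note that the paper itself gives no proof of this lemma; it is quoted from \cite{MXHZY}, so your attempt has to stand on its own.)

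However, your construction of the torsion decomposition for the glued pair has a genuine gap, and in fact outputs a wrong subobject. Since $B/B_{1}$ embeds in $j_{*}(V'')$ and $i^{!}$ is left exact with $i^{!}j_{*}=0$ (Lemma \ref{lem-rec}(1)), one has $i^{!}(B/B_{1})=0$; hence your ``refinement'' step is vacuous and your candidate torsion part is just $X=B_{1}$. Nothing forces $i^{*}(B_{1})\in\mathcal{T}'$: take $B=i_{*}(F')$ with $0\neq F'\in\mathcal{F}'$ and $F'\notin\mathcal{T}'$ (for instance $\mathcal{T}'=0$); then $j^{*}(B)=0$, so $U''=V''=0$ and $X=B_{1}=B$, yet $B\in\mathcal{F}$ and $B\notin\mathcal{T}$, so $0\to B\to B\to 0\to 0$ is not a torsion decomposition. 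The real difficulty, which your sketch never confronts, is securing $i^{*}(X)\in\mathcal{T}'$; this is delicate because $i^{*}$ is only right exact and therefore uncontrolled on subobjects such as kernels of maps to $j_{*}(V'')$. A construction that does work: put $X_{0}:=\Ima\bigl(j_{!}(U'')\to j_{!}j^{*}(B)\to B\bigr)$, so that $j^{*}(X_{0})\cong U''$ and $i^{*}(X_{0})=0$ (a quotient of $i^{*}j_{!}(U'')=0$), hence $X_{0}\in\mathcal{T}$; then let $X$ be the preimage in $B$ of $i_{*}(U')\subseteq B/X_{0}$, where $U'$ is the torsion subobject of $i^{!}(B/X_{0})$ in $\mathcal{A}$. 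Right exactness of $i^{*}$ now gives $i^{*}(X)\cong U'\in\mathcal{T}'$ and exactness of $j^{*}$ gives $j^{*}(X)\cong U''$, so $X\in\mathcal{T}$; finally $j^{*}(B/X)\cong V''$, and $i^{!}(B/X)\in\mathcal{F}'$ follows from the fact that $i_{*}i^{!}(M)$ is the largest subobject of $M$ lying in the Serre subcategory $i_{*}(\mathcal{A})$, combined with the maximality of $U'$ in $i^{!}(B/X_{0})$: any nonzero torsion subobject of $i^{!}(B/X)$ would pull back to a subobject of $i^{!}(B/X_{0})$ in $\mathcal{T}'$ strictly containing $U'$. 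Without this (or an equivalent device) the decomposition axiom for the glued pair is not established.
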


Now the second step is to study the  functorially finite property for these torsion classes and torsionfree classes. Before doing it,
we need the following lemma.

\begin{lemma}\label{lem-3.1}
Let $(\mathcal{A},\mathcal{B},\mathcal{C})$ be a recollement of abelian categories, and let $(\mathcal{T},\mathcal{F})$ be a torsion pair in $\mathcal{B}$. Then
\begin{itemize}
\item[(1)] $i_{*}i^{!}(\mathcal{T})\subseteq\mathcal{T}$ if and only if $i^{*}(\mathcal{T})=i^{!}(\mathcal{T})$.
\item[(2)] $i_{*}i^{*}(\mathcal{F})\subseteq\mathcal{F}$ if and only if $i^{*}(\mathcal{F})=i^{!}(\mathcal{F})$.
\end{itemize}
\end{lemma}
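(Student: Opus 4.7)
The plan is to prove (1) directly and obtain (2) by a dual argument, exploiting the complementary closure properties: $\mathcal{T}$ is closed under quotients while $\mathcal{F}$ is closed under subobjects. The essential tools are the unit/counit isomorphisms $i^{*}i_{*}\cong 1_{\mathcal{A}}\cong i^{!}i_{*}$ from Lemma \ref{lem-rec}(3), together with the two four-term exact sequences from Lemma \ref{lem-rec}(4). Each sequence will feed one of the two inclusions, while the hypothesis $i_{*}i^{!}(\mathcal{T})\subseteq\mathcal{T}$ (respectively $i_{*}i^{*}(\mathcal{F})\subseteq\mathcal{F}$) supplies the other.

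For (1), I would show $i^{*}(\mathcal{T})\subseteq i^{!}(\mathcal{T})$ first and observe that this inclusion does not even require the hypothesis: for $T\in\mathcal{T}$ the first four-term sequence $0\to i_{*}(A)\to j_{!}j^{*}(T)\to T\to i_{*}i^{*}(T)\to 0$ exhibits $i_{*}i^{*}(T)$ as a quotient of $T$, so $i_{*}i^{*}(T)\in\mathcal{T}$, and applying $i^{!}$ together with $i^{!}i_{*}\cong 1_{\mathcal{A}}$ yields $i^{*}(T)\cong i^{!}(i_{*}i^{*}(T))\in i^{!}(\mathcal{T})$. The reverse inclusion $i^{!}(\mathcal{T})\subseteq i^{*}(\mathcal{T})$ is where the hypothesis enters directly: for $T\in\mathcal{T}$ we have $i_{*}i^{!}(T)\in\mathcal{T}$ by assumption, and then $i^{!}(T)\cong i^{*}(i_{*}i^{!}(T))\in i^{*}(\mathcal{T})$.

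For (2), the roles of the two sequences swap. The free direction now becomes $i^{!}(\mathcal{F})\subseteq i^{*}(\mathcal{F})$: the second four-term sequence $0\to i_{*}i^{!}(F)\to F\to j_{*}j^{*}(F)\to i_{*}(A')\to 0$ displays $i_{*}i^{!}(F)$ as a subobject of $F\in\mathcal{F}$, hence $i_{*}i^{!}(F)\in\mathcal{F}$, and $i^{!}(F)\cong i^{*}(i_{*}i^{!}(F))\in i^{*}(\mathcal{F})$. The reverse inclusion $i^{*}(\mathcal{F})\subseteq i^{!}(\mathcal{F})$ uses the hypothesis: $i_{*}i^{*}(F)\in\mathcal{F}$ gives $i^{*}(F)\cong i^{!}(i_{*}i^{*}(F))\in i^{!}(\mathcal{F})$.

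I do not anticipate a real obstacle; the statement is essentially a packaging of Lemma \ref{lem-rec}(3)--(4) against the closure properties of a torsion pair. The only point requiring care is the bookkeeping of which exact sequence realizes $i_{*}i^{*}(-)$ as a quotient versus $i_{*}i^{!}(-)$ as a subobject, so that the ``free'' inclusion in each part is matched with the appropriate closure property of $\mathcal{T}$ or $\mathcal{F}$, and the hypothesis is used exactly where the opposite closure property would be needed but is unavailable.
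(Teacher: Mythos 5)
Your proof is correct and follows essentially the same route as the paper: the hypothesis gives one inclusion via $i^{*}i_{*}\cong 1_{\mathcal{A}}\cong i^{!}i_{*}$, and the other inclusion comes for free from the four-term sequences of Lemma \ref{lem-rec}(4). The only cosmetic difference is that you invoke closure of $\mathcal{T}$ under quotients (resp. $\mathcal{F}$ under subobjects) directly, whereas the paper verifies $i_{*}i^{*}(X)\in{}^{\perp_{0}}\mathcal{F}=\mathcal{T}$ by applying $\Hom_{\mathcal{B}}(-,Y)$, which is the same fact.
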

\begin{proof}
(1) For the only if part. Let $X\in \mathcal{T}$ and $Y\in\mathcal{F}$. By Lemma \ref{lem-rec}(4), there is an exact sequence
$$\xymatrix@C=15pt{X\ar[r]&i_{*}i^{*}(X)\ar[r]&0}$$
in $\mathcal{B}$.
Applying $\Hom_{\mathcal{B}}(-,Y)$ to the above exact sequence
yields an exact sequence
$$\xymatrix@C=15pt{0\ar[r]&\Hom_{\mathcal{B}}(i_{*}i^{*}(X),Y)
\ar[r]&\Hom_{\mathcal{B}}(X,Y)}.$$
It follows that $\Hom_{\mathcal{B}}(i_{*}i^{*}(X),Y)=0$ since $\Hom_{\mathcal{B}}(X,Y)=0$. So $i_{*}i^{*}(X)\in {^{\perp_{0}}\mathcal{F}}=\mathcal{T}$ and $i_{*}i^{*}(\mathcal{T})\subseteq\mathcal{T}$.
Note that $i^{!}i_{*}\cong 1_{\mathcal{A}}$ by Lemma \ref{lem-rec}(3), so $i^{*}(\mathcal{T})\subseteq i^{!}(\mathcal{T})$.

Since $i_{*}i^{!}(\mathcal{T})\subseteq\mathcal{T}$ and $i^{*}i_{*}\cong 1_{\mathcal{A}}$ by assumption and Lemma \ref{lem-rec}(3), it follows that $i^{!}(\mathcal{T})\subseteq i^{*}(\mathcal{T})$.
Thus $i^{*}(\mathcal{T})= i^{!}(\mathcal{T})$.

The if part is obvious.

(2) It is similar to (1).
\end{proof}

Next we can show that, under certain conditions, the glued  torsion class (resp.  torsionfree class) is  functorially finite if the original  torsion classes (resp.  torsionfree classes) are functorially finite.

\begin{theorem}\label{rec-func}
Let $(\mathcal{A},\mathcal{B},\mathcal{C})$ be a recollement of abelian categories, and let $(\mathcal{T}',\mathcal{F}')$ and $(\mathcal{T}'',\mathcal{F}'')$ be torsion pairs in $\mathcal{A}$ and $\mathcal{C}$
respectively, and $(\mathcal{T},\mathcal{F})$ a glued torsion pair in $\mathcal{B}$ with respect to $(\mathcal{T}',\mathcal{F}')$ and $(\mathcal{T}'',\mathcal{F}'')$.
\begin{itemize}
\item[(1)] Assume that $i^{!}$ is exact and $i_{*}i^{!}(\mathcal{T})\subseteq\mathcal{T}$. If $\mathcal{T'}$ and $\mathcal{T''}$ are functorially finite, then $\mathcal{T}$ is functorially finite.
    \item[(2)] Assume that $i^{*}$ is exact and $i_{*}i^{*}(\mathcal{F})\subseteq\mathcal{F}$. If $\mathcal{F}'$ and $\mathcal{F''}$ are functorially finite, then $\mathcal{F}$ is functorially finite.
        \end{itemize}
\end{theorem}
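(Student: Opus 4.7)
The two parts are dual, so I focus on (1); part (2) will follow by the symmetric argument applied to the torsionfree class $\mathcal{F}$, using instead the exact sequence $0\to j_!j^*(T)\to T\to i_*i^*(T)\to 0$ from Lemma \ref{lem-rec}(5) (available when $i^*$ is exact) together with Lemma \ref{lem-3.1}(2). Since $(\mathcal{T},\mathcal{F})$ is a torsion pair, $\mathcal{T}$ is automatically contravariantly finite in $\mathcal{B}$: for any $B\in\mathcal{B}$, the inclusion $t(B)\hookrightarrow B$ of the torsion subobject is a right $\mathcal{T}$-approximation, since any morphism from an object of $\mathcal{T}$ into $B$ has image in $\mathcal{T}$ and hence inside $t(B)$. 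So the content of (1) is to produce left $\mathcal{T}$-approximations.

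My plan is to glue left approximations through the recollement. Given $B\in\mathcal{B}$, pick a left $\mathcal{T}'$-approximation $\alpha:i^*(B)\to M'$ and a left $\mathcal{T}''$-approximation $\beta:j^*(B)\to M''$, which exist by the functorial finiteness of $\mathcal{T}'$ and $\mathcal{T}''$. Under the adjunctions $(i^*,i_*)$ and $(j^*,j_*)$ they correspond to morphisms $\tilde\alpha:B\to i_*(M')$ and $\tilde\beta:B\to j_*(M'')$ in $\mathcal{B}$, which I assemble into a morphism $\phi:B\to i_*(M')\oplus j_*(M'')$ with components $\tilde\alpha$ and $\tilde\beta$. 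Using $i^*i_*\cong 1$, $j^*j_*\cong 1$, $j^*i_*=0$ from Lemma \ref{lem-rec}(3), together with $i^*j_*=0$ from Lemma \ref{lem-rec}(6) (valid because $i^!$ is exact), one computes that $i^*$ and $j^*$ of the target are isomorphic to $M'\in\mathcal{T}'$ and $M''\in\mathcal{T}''$ respectively, placing the target in $\mathcal{T}$.

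To verify that $\phi$ is a left $\mathcal{T}$-approximation, take $f:B\to T$ with $T\in\mathcal{T}$. By Lemma \ref{lem-3.1}(1) combined with the hypothesis $i_*i^!(\mathcal{T})\subseteq\mathcal{T}$ one has $i^!(T)\in\mathcal{T}'$, so Lemma \ref{lem-rec}(5) supplies the short exact sequence
$$0\longrightarrow i_*i^!(T)\xrightarrow{\lambda_T} T\xrightarrow{\eta_T} j_*j^*(T)\longrightarrow 0$$
whose outer terms both lie in $\mathcal{T}$. Factor $j^*(f)=g''\beta$ via the approximation property of $\beta$; naturality of the $(j^*,j_*)$ adjunction then yields $\eta_T f=j_*(g'')\tilde\beta$. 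Granting a lift $\hat h_2:j_*(M'')\to T$ of $j_*(g'')$ through the epimorphism $\eta_T$, the difference $f-\hat h_2\tilde\beta$ is annihilated by $\eta_T$ and hence factors as $\lambda_T f'$ for some $f':B\to i_*i^!(T)$. Since $i_*i^!(T)\in i_*(\mathcal{T}')$ and $\tilde\alpha$ is, by adjunction, a left approximation with respect to the class $i_*(\mathcal{T}')$, one obtains $f'=\hat h_1'\tilde\alpha$, so $f=(\lambda_T\hat h_1',\,\hat h_2)\circ\phi$.

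The hardest step is the lifting of $j_*(g'')$ through $\eta_T$, whose obstruction lies in $\Ext^1_{\mathcal{B}}(j_*(M''),i_*i^!(T))$. I would attack it through the pullback $P$ of $\eta_T$ along $j_*(g'')$: exactness of $j^*$ together with $i^*j_*=0$ yields $j^*(P)\cong M''\in\mathcal{T}''$ and shows $i^*(P)$ is a quotient of $i^!(T)\in\mathcal{T}'$, so $P\in\mathcal{T}$. Combining this with the Ext-isomorphisms of Proposition \ref{prop-adjoint} and with $i^!j_*=0$ (Lemma \ref{lem-rec}(6)), one argues that the pulled-back sequence $0\to i_*i^!(T)\to P\to j_*(M'')\to 0$ splits, producing the desired lift. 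This vanishing is the main technical obstacle; in the module-category applications, the enough-projectives hypothesis makes Proposition \ref{prop-adjoint}(2) directly applicable and streamlines the argument.
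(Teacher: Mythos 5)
Your strategy breaks down at its core, not merely at the lifting step you flag. The candidate map $\phi\colon B\to i_{*}(M')\oplus j_{*}(M'')$ is in general not a left $\mathcal{T}$-approximation, and the obstruction you hope to kill does not vanish: none of the isomorphisms in Proposition \ref{prop-adjoint} applies to $\Ext^{1}_{\mathcal{B}}(j_{*}(M''),i_{*}i^{!}(T))$ (they all have $i_{*}$, $j_{!}$ or $j^{*}$ in the first variable, never $j_{*}$), and knowing $P\in\mathcal{T}$ gives nothing towards splitting, since $\mathcal{T}$ is closed under extensions and so contains plenty of non-split ones. The paper's own Example 5.1(4) is a counterexample to your argument: there $i^{!}$ is exact, $i_{*}i^{!}(\mathcal{T})\subseteq\mathcal{T}$, and the glued torsion class contains the indecomposable module ${S(1)\choose S(3)}$, which is a non-split extension of $j_{*}(S(3))$ by $i_{*}(S(1))$ (so already $\Ext^{1}_{\Lambda}(j_{*}(S(3)),i_{*}(S(1)))\neq 0$). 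Take $B=T={S(1)\choose S(3)}$ and $f=\mathrm{id}_{B}$; with $g''=\mathrm{id}$ your required lift of $j_{*}(g'')$ through $\eta_{T}$ would be a splitting of $\eta_{T}$, which does not exist. Moreover no choice of $M'$, $M''$ repairs this: if $\mathrm{id}_{B}$ factored through $i_{*}(M')\oplus j_{*}(M'')$, then by Krull--Schmidt $B$ would be a direct summand of $i_{*}(M')$ or of $j_{*}(M'')$, contradicting $j^{*}(B)\neq 0\neq i^{!}(B)$ together with $j^{*}i_{*}=0=i^{!}j_{*}$. The defect is structural: objects of the glued class $\mathcal{T}$ carry non-split gluing data between their $i_{*}$- and $j_{*}$-parts, and a split target of the form $i_{*}(M')\oplus j_{*}(M'')$ cannot absorb it.

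This is exactly what the paper's proof avoids. Instead of mapping $B$ into a split object, it transports the approximations $f\colon j^{*}(B)\to X''$ and $g\colon i^{!}(B)\to X'$ back to $\mathcal{B}$ via $j_{!}$ and $i_{*}$, and forms successive pushouts of $B$ itself along $j_{!}(f)$ and $i_{*}(g)$, using the canonical sequences $j_{!}j^{*}(B)\to B\to i_{*}i^{*}(B)\to 0$ and $0\to i_{*}i^{!}(B)\to B\to j_{*}j^{*}(B)\to 0$ (the latter available since $i^{!}$ is exact). The resulting object $X$ inherits the possibly non-split extension structure of $B$; one checks $i^{*}(X)\in\mathcal{T}'$ and $j^{*}(X)\in\mathcal{T}''$ by right-exactness and the vanishing relations, and the universal properties of the pushouts, together with the adjunction isomorphisms and Lemma \ref{lem-3.1}(1), give the factorization of an arbitrary $f\colon B\to T$. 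Your correct observations (contravariant finiteness of $\mathcal{T}$ is automatic, $\widetilde{\alpha}$ is a left $i_{*}(\mathcal{T}')$-approximation, $i^{!}(T)\in\mathcal{T}'$ via Lemma \ref{lem-3.1}) fit into that scheme, but the approximation must be built by pushout with $B$ as the source of the gluing, not as a direct sum of adjoint transposes.
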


\begin{proof} We only prove the assertion (1), the proof of (2) is similar. First of all, since $(\mathcal{T},\mathcal{F})$ is a torsion pair in $\mathcal{B}$, $\mathcal{T}$ is contravariantly finite, so we only need to prove that $\mathcal{T}$ is covariantly finite.
Assume that $\mathcal{T}'$ and $\mathcal{T}''$ are functorially finite, and
let $B\in\mathcal{B}$. By Lemma \ref{lem-rec}(4) and (5),
there exist exact sequences
$$\xymatrix@C=15pt{0\ar[rr]&&i_{*}(A)\ar[rr]&&j_{!}j^{*}(B)
\ar[rr]^{\epsilon_{B}}\ar@{>>}[rd]_-{\epsilon_{B}'}&&
B\ar[rr]&&i_{*}i^{*}(B)\ar[rr]&&0,\\
&&&&&\Ima \epsilon_{B}\ar@{>->}[ru]_{i_{B}}}$$
$$\xymatrix@C=15pt{0\ar[r]&i_{*}i^{!}(B)\ar[r]^-{\lambda_{B}}
&B\ar[r]^-{\eta_{B}}&j_{*}j^{*}(B)\ar[r]&0}$$
in $\mathcal{B}$ with $A\in\mathcal{A}$.

Since $\mathcal{T}''$ is covariantly finite and $j^{*}(B)\in \mathcal{C}$,
there exists a left $\mathcal{T''}$-approximation of $j^{*}(B)$ in $\mathcal{C}$ as follows: $\xymatrix@C=15pt{j^{*}(B)\ar[r]^-{f}&X''}$ with $X''\in\mathcal{T}''$.
Since $j_{!}$ is right exact by Lemma \ref{lem-rec}(2), we get the following exact sequence
\begin{align}\label{j_{!}}
\xymatrix@C=15pt{j_{!}j^{*}(B)\ar[r]^{j_{!}(f)}&j_{!}(X'')\ar[r]&j_{!}(\Coker f)\ar[r]&0}
\end{align}
in $\mathcal{B}$.
Consider the following pushout diagram
\begin{align}\label{3.6}
\xymatrix@C=15pt{
j_{!}j^{*}(B)\ar[r]^{\epsilon_{B}'}\ar[d]^{j_{!}(f)}
&\Ima \epsilon_{B}\ar@{-->}[d]^{f'}\ar[r]&0\\
j_{!}(X'')\ar[d]\ar@{-->}[r]&U\ar@{-->}[d]\ar@{-->}[r]&0\\
j_{!}(\Coker f)\ar[d]\ar@{==}[r]&\ar@{-->}[d]j_{!}(\Coker f)\\
0&0.}\end{align}
Then we get the following pushout diagram
\begin{align}\label{3.7}
\xymatrix@C=15pt{
0\ar[r]&\Ima \epsilon_{B}\ar[r]^{i_{B}}\ar[d]^{f'}&\ar[r]B\ar@{-->}[d]^{f''}&i_{*}i^{*}(B)\ar@{==}[d]\ar[r]&0\\
0\ar@{-->}[r]&U\ar[d]\ar@{-->}[r]&V''\ar@{-->}[r]\ar@{-->}[d]&i_{*}i^{*}(B)\ar@{-->}[r]&0\\
&j_{!}(\Coker f)\ar[d]\ar@{==}[r]&\ar@{-->}[d]j_{!}(\Coker f)\\
&0&0.}\end{align}

On the other hand, since $\mathcal{T}'$ is covariantly finite and $i^{!}(B)\in \mathcal{A}$, there exists a left $\mathcal{T'}$-approximation of $i^{!}(B)$ in $\mathcal{A}$ as follows:
$\xymatrix@C=15pt{i^{!}(B)\ar[r]^-{g}&X'}$ with $X'\in\mathcal{T}'$. Since $i_{*}$ is exact by Lemma \ref{lem-rec}(2),
we get the following exact sequence
\begin{align}\label{i_{*}}
\xymatrix@C=15pt{i_{*}i^{!}(B)\ar[r]^{i_{*}(g)}&i_{*}(X')\ar[r]&i_{*}(\Coker g)\ar[r]&0}
\end{align}
in $\mathcal{B}$.
Consider the following pushout diagram
\begin{align}\label{-2}
\xymatrix@C=15pt{
0\ar[r]&i_{*}i^{!}(B)\ar[r]^{\lambda_{B}}\ar[d]^{i_{*}(g)}&\ar[r]B\ar@{-->}[d]^{g'}&j_{*}j^{*}(B)\ar@{==}[d]\ar[r]&0\\
0\ar@{-->}[r]&i_{*}(X')\ar[d]\ar@{-->}[r]&V'\ar@{-->}[r]\ar@{-->}[d]&j_{*}j^{*}(B)\ar@{-->}[r]&0\\
&i_{*}(\Coker g)\ar[d]\ar@{==}[r]&\ar@{-->}[d]i_{*}(\Coker g)\\
&0&0.}
\end{align}
Then we get the following pushout diagram
\begin{align}\label{-1}
\xymatrix@C=15pt{
B\ar[r]^{f''}\ar[d]^{g'}&\ar[r]V''\ar@{-->}[d]^{g''}&j_{!}(\Coker f)\ar@{==}[d]\ar[r]&0\\
V'\ar[d]\ar@{-->}[r]&X\ar@{-->}[r]\ar@{-->}[d]&j_{!}(\Coker f)\ar@{-->}[r]&0\\
i_{*}(\Coker g)\ar[d]\ar@{==}[r]&\ar@{-->}[d]i_{*}(\Coker g)\\
0&0.}
\end{align}

Since $j^{*}$ is exact (by Lemma \ref{lem-rec}(2)) and $\Ima i_{*}=\Ker j^{*}$, applying $j^{*}$ to the middle column in the diagram (\ref{-1}) yields an exact sequence $j^{*}(V'')\to j^{*}(X)\to 0$; applying $j^{*}$ to the middle row in diagram (\ref{3.7}) yields an exact sequence
$j^{*}(U)\to j^{*}(V'')\to 0$; applying $ j^{*}$ to the middle row in the diagram (\ref{3.6}) yields an exact sequence
$\xymatrix@C=15pt{j^{*}j_{!}(X'')\ar[r]&j^{*}(U)\ar[r]&0}$.
So we have an exact sequence $\xymatrix@C=15pt{j^{*}j_{!}(X'')\ar[r]&j^{*}(X)\ar[r]&0}$.
Notice that $\mathcal{T''}$ is closed under quotient objects and $j^{*}j_{!}(X'')\cong X''\in\mathcal{T''}$, thus $j^{*}(X)\in\mathcal{T''}$.

Since $i^{!}$ is exact by assumption, $i^{*}j_{*}=0$ by Lemma \ref{lem-rec}(6).
Since $i^{*}$ is right exact by Lemma \ref{lem-rec}(2),
applying $i^{*}$ to the middle row in the diagram (\ref{-2}) yields that $\xymatrix@C=15pt{i^{*}i_{*}(X')\ar[r]&i^{*}(V')\ar[r]&0}$ is exact.
Since $i^{*}j_{!}=0$ by Lemma \ref{lem-rec}(1),
applying $i^{*}$ to the middle row in the diagram (\ref{-1}) yields that
$\xymatrix@C=15pt{i^{*}(V')\ar[r]&i^{*}(X)\ar[r]&0}$ is exact.
Thus we have an exact sequence $\xymatrix@C=15pt{i^{*}i_{*}(X')\ar[r]&i^{*}(X)\ar[r]&0}$.
Notice that $\mathcal{T'}$ is closed under quotient objects and $i^{*}i_{*}(X')\cong X'\in \mathcal{T'}$, so $i^{*}(X)\in \mathcal{T'}$. Thus
$X\in \mathcal{T}$.

Now we claim that $\xymatrix@C=15pt{g''f'': B\ar[r]&X}$ is a left $\mathcal{T}$-approximation of $B$ in $\mathcal{B}$.

Let $\widetilde{X}\in\mathcal{T}$ and $\xymatrix@C=15pt{h: B\ar[r]&\widetilde{X}}$ be any morphism in $\mathcal{B}$.
Applying the functor $\Hom_{\mathcal{B}}(-,\widetilde{X})$ to the exact sequence (\ref{j_{!}}) yields the following commutative diagram
$${\small\xymatrix@C=18pt{
0\ar[r]&\Hom_{\mathcal{B}}(j_{!}(\Coker f),\widetilde{X})
\ar[r]\ar[d]^{\cong}&\Hom_{\mathcal{B}}(j_{!}(X''),\widetilde{X})
\ar[rr]^-{\Hom_{\mathcal{B}}(j_{!}(f),\widetilde{X})}\ar[d]^{\cong}&&\Hom_{\mathcal{B}}(j_{!}j^{*}(B),\widetilde{X})
\ar[d]^{\cong}\\
0\ar[r]&\Hom_{\mathcal{C}}(\Coker f,j^{*}(\widetilde{X}))\ar[r]&\Hom_{\mathcal{C}}(X'',j^{*}(\widetilde{X}))
\ar[rr]^-{\Hom_{\mathcal{C}}(f,j^{*}(\widetilde{X}))}&&\Hom_{\mathcal{C}}(j^{*}(B),j^{*}(\widetilde{X})).
}}$$
Notice that $\Hom_{\mathcal{C}}(f,j^{*}(\widetilde{X}))$ is epic since $f$ is a left $\mathcal{T''}$-approximation of $j^{*}(B)$ and $j^{*}(\widetilde{X})\in\mathcal{T''}$,
so $\Hom_{\mathcal{B}}(j_{!}(f),\widetilde{X})$ is epic. It follows that there exists a morphism $\xymatrix@C=15pt{\alpha :j_{!}(X'')\ar[r]&\widetilde{X}}$ such that $hi_{B}\epsilon_{B}'=\alpha j_{!}(f)$. Since the diagram (\ref{3.6}) is a pushout, there exists a morphism $\xymatrix@C=15pt{\beta: U\ar[r]&\widetilde{X}}$ such that $hi_{B}=\beta f'$. Since the diagram (\ref{3.7}) is also a pushout, there is a morphism $\xymatrix@C=15pt{\beta'': V''\ar[r]&\widetilde{X}}$ such that $h=\beta'' f''$.

Applying $\Hom_{\mathcal{B}}(-,\widetilde{X})$ to the exact sequence (\ref{i_{*}}) yields the following commutative diagram
$${\small\xymatrix@C=18pt{
0\ar[r]&\Hom_{\mathcal{B}}(i_{*}(\Coker g),\widetilde{X})
\ar[r]\ar[d]^{\cong}&\Hom_{\mathcal{B}}(i_{*}(X'),\widetilde{X})
\ar[rr]^-{\Hom_{\mathcal{B}}(i_{*}(g),\widetilde{X})}\ar[d]^{\cong}&&
\Hom_{\mathcal{B}}(i_{*}i^{!}(B),\widetilde{X})\ar[d]^{\cong}\\
0\ar[r]&\Hom_{\mathcal{A}}(\Coker g,i^{!}(\widetilde{X}))
\ar[r]&\Hom_{\mathcal{A}}(X',i^{!}(\widetilde{X}))
\ar[rr]^-{\Hom_{\mathcal{A}}(g,i^{!}(\widetilde{X}))}&&\Hom_{\mathcal{A}}(i^{!}(B),i^{!}(\widetilde{X})).
}}$$
Since $i_{*}i^{!}(\mathcal{T})\subseteq\mathcal{T}$ by assumption, $i^{*}(\mathcal{T})=i^{!}(\mathcal{T})$ by Lemma \ref{lem-3.1}(1). Notice that ${\Hom_{\mathcal{A}}(g,i^{!}(\widetilde{X}))}$ is epic since $g$ is a left $\mathcal{X'}$-approximation and $i^{!}(\widetilde{X})\in i^{!}(\mathcal{T})=i^{*}(\mathcal{T})\in \mathcal{T'}$,
so $\Hom_{\mathcal{B}}(i_{*}(g),\widetilde{X})$ is epic and there exists a morphism $\xymatrix@C=15pt{\alpha': i_{*}(X')\ar[r]&\widetilde{X}}$ such that $h\lambda_{B}=\alpha'i_{*}(g)$. Notice that the diagram (\ref{-2}) is a pushout, so there exists a morphism $\xymatrix@C=15pt{\beta': V'\ar[r]&\widetilde{X}}$ such that $h=\beta'g'$. Thus $\beta''f''=h=\beta'g'$. Since the diagram (\ref{-1}) is a pushout, there exists a morphism $\xymatrix@C=15pt{h': X\ar[r]&\widetilde{X}}$ such that $\beta''=h'g''$.
Thus $h=\beta''f''=h'g''f''$, and so $g''f''$ is a left $\mathcal{T}$-approximation of $B$. Therefore, $\mathcal{T}$ is functorially finite.
\end{proof}

Following this theorem and  Adachi-Iyama-Reiten's correspondence, we obtain

\begin{corollary}\label{cor-st}
 Let $(\modu \Lambda', \modu \Lambda, \modu \Lambda'')$ be a recollement of abelian categories, and let $T'$ and $T''$ be support $\tau$-tilting modules in $\modu \Lambda'$ and $\modu \Lambda''$ respectively, and $(\mathcal{T},\mathcal{F})$ a glued torsion pair in $\modu \Lambda$ with respect to $(\Gen T', T'^{\perp_{0}})$ and $(\Gen T'', T''^{\perp_{0}})$.
  \begin{itemize}
\item[(1)] If $i^{!}$ is exact and $i_{*}i^{!}(\mathcal{T})\subseteq\mathcal{T}$, then there is a support $\tau$-tilting $\Lambda$-module $T$ such that $(\mathcal{T},\mathcal{F})=(\Gen T, T^{\perp_{0}})$.
    \item[(2)] If $i^{*}$ is exact and $i_{*}i^{*}(\mathcal{F})\subseteq\mathcal{F}$, then there is a support $\tau$-tilting $\Lambda$-module $T$ such that $(\mathcal{T},\mathcal{F})=(\Gen T, T^{\perp_{0}})$.
  \end{itemize}
\end{corollary}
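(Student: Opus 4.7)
The plan is to reduce both parts to Theorem 3.2 combined with the Adachi--Iyama--Reiten bijection recorded in Remark 2.13. Since $T'$ and $T''$ are assumed to be support $\tau$-tilting modules, that bijection tells us immediately that $\Gen T'$ and $\Gen T''$ are functorially finite torsion classes in $\modu \Lambda'$ and $\modu \Lambda''$ respectively, with corresponding functorially finite torsionfree classes $\mathcal{F}(T')$ and $\mathcal{F}(T'')$. So all the ``functorially finite'' hypotheses needed to feed Theorem 3.2 are already in place.

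For part (1), I would apply Theorem 3.2(1) directly: the assumptions that $i^{!}$ is exact and $i_{*}i^{!}(\mathcal{T})\subseteq\mathcal{T}$, together with the functorial finiteness of $\Gen T'$ and $\Gen T''$ just noted, yield that the glued torsion class $\mathcal{T}$ is functorially finite in $\modu \Lambda$. Then I invoke the bijection between $s\tau$-tilt$\,\Lambda$ and $f$-tors$\,\Lambda$ in Remark 2.13(1): setting $T:=P(\mathcal{T})$ produces a basic support $\tau$-tilting $\Lambda$-module with $\Gen T=\mathcal{T}$. Finally, since the torsion class in a torsion pair determines the torsionfree class via $\mathcal{F}=\mathcal{T}^{\perp_{0}}$, we get $\mathcal{F}(T)=\mathcal{F}$ for free, and (1) is done.

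For part (2), I would argue symmetrically using Theorem 3.2(2): the assumptions $i^{*}$ exact and $i_{*}i^{*}(\mathcal{F})\subseteq\mathcal{F}$, combined with the functorial finiteness of $\mathcal{F}(T')$ and $\mathcal{F}(T'')$, show that $\mathcal{F}$ is a functorially finite torsionfree class in $\modu \Lambda$. Going across the bijection between $f$-torsf$\,\Lambda$ and $s\tau$-tilt$\,\Lambda$ (again from Remark 2.13(1)), we obtain a basic support $\tau$-tilting module $T$ whose induced torsion pair satisfies $\mathcal{F}(T)=\mathcal{F}$, and therefore $\Gen T={}^{\perp_{0}}\mathcal{F}=\mathcal{T}$.

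There is no real obstacle: all the genuine work sits in Theorem 3.2, and the corollary is just its translation through the Adachi--Iyama--Reiten correspondence. The only minor subtlety worth flagging is the uniqueness observation that the torsion pair produced by the bijection must coincide with the input glued torsion pair $(\mathcal{T},\mathcal{F})$ on both components, which as noted is automatic from the fact that in a torsion pair either class determines the other.
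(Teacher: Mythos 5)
Your proposal is correct and follows exactly the paper's own route: the paper proves this corollary by citing Remark \ref{bijection}(1) together with Theorem \ref{rec-func}, which is precisely your reduction (glued class is functorially finite, then pass through the Adachi--Iyama--Reiten bijection and use that either class of a torsion pair determines the other). No gaps; only your numbering of the cited results differs slightly from the paper's.
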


\begin{proof}
Apply Remark \ref{bijection}(1) and Theorem \ref{rec-func}.
\end{proof}

In the above, we have shown how to glue functorially finite torsion classes (resp. functorially finite torsionfree classes) from the edges to the middle in a recollement. Next, we show how to construct functorially finite torsion classes from the middle to the edges in a recollement.

We need the following two results.

\begin{lemma}\label{covariant}
Let $\xymatrix@C=15pt{j^{*}:\mathcal{B}\ar[r]&\mathcal{C}}$ be an additive functor between abelian categories $\mathcal{B}$ and $\mathcal{C}$. If $j^{*}$ has a left adjoint $j_{!}$ and $\mathcal{T}$ is a covariantly finite subcategory of $\mathcal{B}$, then $j^{*}(\mathcal{T})$ is covariantly finite in $\mathcal{C}$.
\end{lemma}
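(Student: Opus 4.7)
The plan is to produce, for every object $C\in\mathcal{C}$, an explicit left $j^{*}(\mathcal{T})$-approximation built from a chosen left $\mathcal{T}$-approximation of $j_{!}(C)$ in $\mathcal{B}$ and the unit of the adjunction $(j_{!},j^{*})$.

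First, I would fix $C\in\mathcal{C}$ and pass to $\mathcal{B}$ by applying $j_{!}$. Since $\mathcal{T}$ is covariantly finite in $\mathcal{B}$, there exists a left $\mathcal{T}$-approximation
$$\xymatrix@C=15pt{f: j_{!}(C)\ar[r]& T}$$
with $T\in\mathcal{T}$. Let $\eta_{C}\colon C\to j^{*}j_{!}(C)$ denote the unit of the adjunction $(j_{!},j^{*})$. My candidate approximation of $C$ into $j^{*}(\mathcal{T})$ is the composite
$$\xymatrix@C=15pt{C\ar[r]^-{\eta_{C}}& j^{*}j_{!}(C)\ar[r]^-{j^{*}(f)}& j^{*}(T).}$$

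To verify the universal property, I would take an arbitrary $T'\in\mathcal{T}$ together with a morphism $g\colon C\to j^{*}(T')$. Under the adjunction bijection, $g$ corresponds to a unique morphism $\bar{g}\colon j_{!}(C)\to T'$ satisfying $g=j^{*}(\bar{g})\circ\eta_{C}$. Because $f$ is a left $\mathcal{T}$-approximation and $T'\in\mathcal{T}$, there exists $h'\colon T\to T'$ with $h'\circ f=\bar{g}$. Applying $j^{*}$ and composing with $\eta_{C}$ yields
$$g\;=\;j^{*}(\bar{g})\circ\eta_{C}\;=\;j^{*}(h')\circ j^{*}(f)\circ\eta_{C},$$
so $h:=j^{*}(h')$ is the required factorization through $j^{*}(f)\circ\eta_{C}$. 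This shows $j^{*}(f)\circ\eta_{C}$ is a left $j^{*}(\mathcal{T})$-approximation, and hence $j^{*}(\mathcal{T})$ is covariantly finite in $\mathcal{C}$.

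I do not anticipate a genuine obstacle; the argument is a direct exercise in the hom-set adjunction. The only point requiring some care is the bookkeeping of the unit-counit triangle identity, namely the identification $g=j^{*}(\bar{g})\circ\eta_{C}$, which is precisely the naturality statement defining the adjunction and requires nothing beyond the hypotheses available.
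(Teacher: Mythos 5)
Your proposal is correct and matches the paper's argument: the paper constructs the same candidate approximation $j^{*}(f)\circ\eta_{C}$ (writing $\gamma_{C}$ for the unit) and verifies it by factoring the adjoint morphism $\epsilon_{T_{0}}\circ j_{!}(g)$ through the left $\mathcal{T}$-approximation $f$, using naturality of the unit and the triangle identity — which is exactly the hom-set adjunction bijection you invoke. The only difference is presentational: you package the unit--counit computation as the abstract bijection $\Hom_{\mathcal{B}}(j_{!}(C),T')\cong\Hom_{\mathcal{C}}(C,j^{*}(T'))$, while the paper spells it out diagrammatically.
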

\begin{proof}
Let $C\in \mathcal{C}$, then $j_{!}(C)\in \mathcal{B}$. Since $\mathcal{T}$ is covariantly finite in $\mathcal{B}$, there is a left $\mathcal{T}$-approximation $\xymatrix@C=15pt{j_{!}(C)\ar[r]^-{f}&T}$ of $j_{!}(C)$ in $\mathcal{B}$ with $T\in\mathcal{T}$.

Let $T_{0}$ be any object in $\mathcal{T}$ and $\xymatrix@C=15pt{C\ar[r]^-{g}&j^{*}(T_{0})}$ be any morphism in $\mathcal{B}$.
Since $f$ is a left $\mathcal{T}$-approximation of $j_{!}(C)$, there exists a morphism $\xymatrix@C=15pt{\beta: T\ar[r]&T_{0}}$ such that the following diagram is commutative
$$\xymatrix@C=20pt{j_{!}(C)\ar[d]_{j_{!}(g)}\ar[r]^{f}&T\ar@{.>}[d]^{\beta}\\
j_{!}j^{*}(T_{0})\ar[r]^-{\epsilon_{T_{0}}}&T_{0}.}$$

Consider the following commutative diagram
$$\xymatrix@C=15pt{C\ar[d]_{g}\ar[rr]^-{\gamma_{C}}&&
j^{*}j_{!}(C)
\ar[d]^{j^{*}j_{!}(g)}
\\
j^{*}(T_{0})\ar[rr]^-{\gamma_{j^{*}(T_{0})}}&&j^{*}j_{!}j^{*}(T_{0})
,}$$
where $\xymatrix@C=15pt{\gamma: 1_{\mathcal{C}}\ar[r]&j^{*}j_{!}}$ is the unit of the adjoint pair $(j_{!},j^{*})$.

Then $j^{*}(\beta)j^{*}(f)\gamma_{C}=j^{*}(\epsilon_{T_{0}})
j^{*}j_{!}(g)\gamma_{C}=j^{*}(\epsilon_{T_{0}})\gamma_{j^{*}(T_{0})}g$. Since $(j_{!},j^{*})$ is an adjoint pair,
$1_{j^{*}(T_{0})}=j^{*}(\epsilon_{T_{0}})\gamma_{j^{*}(T_{0})}$, and hence $j^{*}(\beta)j^{*}(f)\gamma_{C}=g$, that is,
the following diagram is commutative
$$\xymatrix@C=40pt{C\ar[d]_{g}\ar[r]^{j^{*}(f)\gamma_{C}}&j^{*}(T)\ar@{.>}[ld]^{j^{*}(\beta)}\\
j^{*}(T_{0}).}$$
Thus $j^{*}(f)\gamma_{C}$ is a left $j^{*}(\mathcal{T})$-approximation of $C$ and $j^{*}(\mathcal{T})$ is covariantly finite in $\mathcal{C}$.

\end{proof}

Dually, we have
\begin{lemma}\label{contravariant}
Let $\xymatrix@C=15pt{j^{*}:\mathcal{B}\ar[r]&\mathcal{C}}$ be an additive functor between abelian categories $\mathcal{B}$ and $\mathcal{C}$. If $j^{*}$ has a right adjoint $j_{*}$ and $\mathcal{T}$ is a contravariantly finite subcategory of $\mathcal{B}$, then $j^{*}(\mathcal{T})$ is contravariantly finite in $\mathcal{C}$.
\end{lemma}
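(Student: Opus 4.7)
The plan is to dualize the argument given for Lemma \ref{covariant}: whereas that proof exploited the unit $\gamma: 1_{\mathcal{C}} \to j^{*}j_{!}$ of the adjoint pair $(j_{!}, j^{*})$, here I would work with the counit $\xi: j^{*}j_{*} \to 1_{\mathcal{C}}$ of the adjoint pair $(j^{*}, j_{*})$. Concretely, given $C \in \mathcal{C}$, I would apply $j_{*}$ to land in $\mathcal{B}$, extract a right $\mathcal{T}$-approximation, push it forward by $j^{*}$, and then compose with $\xi_{C}$ to obtain a candidate right $j^{*}(\mathcal{T})$-approximation of $C$.

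In detail, the steps are as follows. First, for $C \in \mathcal{C}$ consider $j_{*}(C) \in \mathcal{B}$; since $\mathcal{T}$ is contravariantly finite in $\mathcal{B}$, pick a right $\mathcal{T}$-approximation $\xymatrix@C=15pt{T \ar[r]^-{f} & j_{*}(C)}$ with $T \in \mathcal{T}$. Second, form the morphism
$$\xymatrix@C=20pt{j^{*}(T) \ar[r]^-{j^{*}(f)} & j^{*}j_{*}(C) \ar[r]^-{\xi_{C}} & C}$$
in $\mathcal{C}$, with $j^{*}(T) \in j^{*}(\mathcal{T})$. Third, I would verify this is a right $j^{*}(\mathcal{T})$-approximation: given any $T_{0} \in \mathcal{T}$ and any morphism $\xymatrix@C=15pt{g: j^{*}(T_{0}) \ar[r] & C}$, use the $(j^{*}, j_{*})$-adjunction to transpose $g$ to a morphism $\xymatrix@C=15pt{\widetilde{g}: T_{0} \ar[r] & j_{*}(C)}$. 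By the approximation property of $f$, there is $\xymatrix@C=15pt{\beta: T_{0} \ar[r] & T}$ with $f\beta = \widetilde{g}$. Applying $j^{*}$ and composing with $\xi_{C}$, the triangle identity $g = \xi_{C} \circ j^{*}(\widetilde{g})$ for the counit yields
$$\xi_{C} \circ j^{*}(f) \circ j^{*}(\beta) = \xi_{C} \circ j^{*}(f\beta) = \xi_{C} \circ j^{*}(\widetilde{g}) = g,$$
which is exactly the factorization required.

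There is no serious obstacle here; the only care needed is bookkeeping of adjunctions to ensure that the counit triangle identity $1_{C} = \xi_{C} \circ j^{*}(\eta_{C})$ (applied to the transpose of $g$) is invoked correctly, playing the symmetric role to the identity $1_{j^{*}(T_{0})} = j^{*}(\epsilon_{T_{0}}) \gamma_{j^{*}(T_{0})}$ used in Lemma \ref{covariant}. Once this identity is in hand, the diagram chase is routine and the proof is complete.
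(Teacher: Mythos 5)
Your proof is correct and is exactly the dual of the paper's argument for Lemma \ref{covariant}, which is all the paper itself offers (it states the contravariant case with only the remark ``Dually''). The only nitpick is notational: the identity you need is the triangle identity $\xi_{j^{*}(T_{0})}\circ j^{*}(\eta_{T_{0}})=1_{j^{*}(T_{0})}$ (with $\eta$ the unit $1_{\mathcal{B}}\to j_{*}j^{*}$), used via naturality of $\xi$ to get $g=\xi_{C}\circ j^{*}(\widetilde{g})$, rather than ``$1_{C}=\xi_{C}\circ j^{*}(\eta_{C})$'' as written, since $\eta$ is only defined on objects of $\mathcal{B}$.
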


Now we can obtain the following result.

\begin{theorem}\label{con-func}
Let $(\mathcal{A},\mathcal{B},\mathcal{C})$ be a recollement of abelian categories, and let $(\mathcal{T},\mathcal{F})$ be a torsion pair in $\mathcal{B}$.
\begin{itemize}
\item[(1)] Assume that $\mathcal{T}$ is functorially finite.
\begin{itemize}
\item[(a)] If $i^{*}$ has a left adjoint, then $(i^{*}(\mathcal{T}),i^{!}(\mathcal{F}))$ is a torsion pair with $i^{*}(\mathcal{T})$ functorially finite in $\mathcal{A}$.
    \item[(b)] If $j_{*}j^{*}(\mathcal{F})\subseteq\mathcal{F}$, then $(j^{*}(\mathcal{T}),j^{*}(\mathcal{F}))$
is a torsion pair with $j^{*}(\mathcal{T})$ functorially finite in $\mathcal{C}$.
    \end{itemize}
\item[(2)] Assume that $\mathcal{F}$ is functorially finite.
\begin{itemize}
\item[(a)] If $i^{!}$ has a right adjoint, then $(i^{*}(\mathcal{T}),i^{!}(\mathcal{F}))$ is a torsion pair with $i^{!}(\mathcal{F})$ functorially finite in $\mathcal{A}$.
\item[(b)] If $j_{*}j^{*}(\mathcal{F})\subseteq\mathcal{F}$, then $(j^{*}(\mathcal{T}),j^{*}(\mathcal{F}))$
is a torsion pair with $j^{*}(\mathcal{F})$ functorially finite in $\mathcal{C}$.
\end{itemize}
\end{itemize}
\end{theorem}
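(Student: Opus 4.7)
The plan is to reduce the theorem entirely to the combination of three inputs: the Ma--Huang construction of torsion pairs in $\mathcal{A}$ and $\mathcal{C}$ induced from a torsion pair in $\mathcal{B}$; the general fact that in any torsion pair $(\mathcal{X},\mathcal{Y})$, $\mathcal{X}$ is automatically contravariantly finite and $\mathcal{Y}$ is automatically covariantly finite; and Lemmas \ref{covariant} and \ref{contravariant} (applied with the role of $j^{*}$ replaced by $i^{*}$, $i^{!}$ or $j^{*}$ depending on the case). So in every item half of the finiteness requirement is free from the torsion pair property, and the remaining half is obtained by pushing covariant/contravariant finiteness through the relevant functor using its adjoints.

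For part (1a), the Ma--Huang lemma yields that $(i^{*}(\mathcal{T}),i^{!}(\mathcal{F}))$ is a torsion pair in $\mathcal{A}$; in particular $i^{*}(\mathcal{T})$ is already contravariantly finite. Since $\mathcal{T}$ is by hypothesis covariantly finite and $i^{*}$ admits a left adjoint by assumption, I apply Lemma \ref{covariant} to $i^{*}$ with that left adjoint and conclude that $i^{*}(\mathcal{T})$ is covariantly finite, hence functorially finite. Part (1b) follows in the same way: the hypothesis $j_{*}j^{*}(\mathcal{F})\subseteq\mathcal{F}$ makes $(j^{*}(\mathcal{T}),j^{*}(\mathcal{F}))$ a torsion pair in $\mathcal{C}$, so $j^{*}(\mathcal{T})$ is contravariantly finite for free; since $j^{*}$ always has the left adjoint $j_{!}$ in a recollement, Lemma \ref{covariant} pushes covariant finiteness of $\mathcal{T}$ through $j^{*}$.

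Parts (2a) and (2b) are the dual arguments. For (2a) the torsion pair $(i^{*}(\mathcal{T}),i^{!}(\mathcal{F}))$ already gives covariant finiteness of the torsionfree class $i^{!}(\mathcal{F})$; the assumed right adjoint of $i^{!}$ together with Lemma \ref{contravariant} then transfers contravariant finiteness of $\mathcal{F}$ across $i^{!}$. For (2b), again $(j^{*}(\mathcal{T}),j^{*}(\mathcal{F}))$ is a torsion pair by Ma--Huang, giving covariant finiteness of $j^{*}(\mathcal{F})$, and Lemma \ref{contravariant} applied to $j^{*}$ with its built-in right adjoint $j_{*}$ yields the contravariant finiteness.

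There is essentially no serious obstacle; the only point that requires minor care is verifying the hypotheses of Lemmas \ref{covariant} and \ref{contravariant} in each case, namely that the functor we wish to push through really does have the relevant one-sided adjoint. For $j^{*}$ this is automatic from the recollement (it has both $j_{!}$ and $j_{*}$), whereas for $i^{*}$ or $i^{!}$ it is imposed as part of the hypothesis of the corresponding item. No further structural use of the recollement is needed beyond what is already packaged in the Ma--Huang lemma and Lemma \ref{lem-rec}, so the entire argument should be short and essentially a bookkeeping combination of previously established results.
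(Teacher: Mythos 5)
Your proposal is correct and follows essentially the same route as the paper, whose proof is precisely the combination of the Ma--Huang torsion-pair results with Lemmas \ref{covariant} and \ref{contravariant}, together with the standing fact that in any torsion pair the torsion class is contravariantly finite and the torsionfree class is covariantly finite. You have simply spelled out explicitly the bookkeeping that the paper leaves implicit.
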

\begin{proof}
Apply Lemmas \ref{3.1}, \ref{covariant} and \ref{contravariant}.
\end{proof}

Further, we have

\begin{corollary}\label{con-cor-st}
Let $(\modu \Lambda', \modu \Lambda, \modu \Lambda'')$ be a recollement of module categories, and let $T$ be a support $\tau$-tilting $\Lambda$-module and $(\mathcal{T},\mathcal{F}):=(\Gen T,T^{\perp_{0}})$ a torsion pair induced by $T$ in $\modu \Lambda$.
\begin{itemize}
\item[(1)] If $i^{*}$ has a left adjoint  or $i^{!}$ has a right adjoint, then there is a support $\tau$-tilting module $T'$ in $\modu \Lambda'$ such that $(\Gen T',T'^{\perp_{0}})=(i^{*}(\mathcal{T}),i^{!}(\mathcal{F}))$.
\item[(2)]
\begin{itemize}
\item[(a)] If $j_{*}j^{*}(\mathcal{F})\subseteq\mathcal{F}$, then there is a support $\tau$-tilting module $T''$ in $\modu \Lambda''$ such that $(\Gen T'',T''^{\perp_{0}})=(j^{*}(\mathcal{T}),j^{*}(\mathcal{F}))$.
\item[(b)] If $j_{*}$ is exact and $j_{*}j^{*}(\mathcal{T})\subseteq\mathcal{T}$, then there is a support $\tau$-tilting module $T''$ in $\modu \Lambda''$ with respect to $j^{*}(\mathcal{T})$.
     Moreover,  $j_{*}j^{*}(\mathcal{F})\subseteq\mathcal{F}$ if and only if $\Gen T''=j^{*}(\mathcal{T})$ and $T''^{\perp_{0}}=j^{*}(\mathcal{F})$.
\item[(c)] If $j_{!}$ is exact and $j_{!}j^{*}(\mathcal{F})\subseteq\mathcal{F}$, then there is a support $\tau$-tilting module $T''$ in $\modu \Lambda''$ with respect to $j^{*}(\mathcal{F})$. Moreover,  $j_{*}j^{*}(\mathcal{F})\subseteq\mathcal{F}$ if and only if $\Gen T''=j^{*}(\mathcal{T})$ and $T''^{\perp_{0}}=j^{*}(\mathcal{F})$.
\end{itemize}
\end{itemize}
\end{corollary}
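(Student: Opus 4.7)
My plan is to reduce the whole statement to Theorem~\ref{con-func} together with the bijections of Remark~\ref{bijection}. Since $(\mathcal{T},\mathcal{F})=(\Gen T,\mathcal{F}(T))$ comes from a support $\tau$-tilting module, Remark~\ref{bijection}(1) tells us $\mathcal{T}\in f\text{-tors}\,\Lambda$ and $\mathcal{F}\in f\text{-torsf}\,\Lambda$; the task in each clause is then to transport functorial finiteness across the six functors of the recollement and then read off $T'$ or $T''$ from the inverse bijection $P(-)$.

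For part~(1), if $i^{*}$ has a left adjoint then Theorem~\ref{con-func}(1)(a) gives a torsion pair $(i^{*}(\mathcal{T}),i^{!}(\mathcal{F}))$ in $\modu\Lambda'$ whose torsion class is functorially finite; Remark~\ref{bijection}(1) produces a basic support $\tau$-tilting module $T'=P(i^{*}(\mathcal{T}))$ with $\Gen T'=i^{*}(\mathcal{T})$, forcing $\mathcal{F}(T')=i^{!}(\mathcal{F})$ by uniqueness of the torsionfree partner. If instead $i^{!}$ has a right adjoint, I run the dual argument via Theorem~\ref{con-func}(2)(a), using the bijection between $s\tau$-tilt and $f$-torsf. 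Part~(2)(a) is immediate from Theorem~\ref{con-func}(1)(b) together with the same bijection.

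For parts~(2)(b) and~(2)(c) the hypotheses do not directly fit Theorem~\ref{con-func}, so the main work is to verify by hand that $j^{*}(\mathcal{T})$ (respectively $j^{*}(\mathcal{F})$) is a functorially finite torsion (respectively torsionfree) class in $\modu\Lambda''$. For (2)(b), assume $j_{*}$ is exact and $j_{*}j^{*}(\mathcal{T})\subseteq\mathcal{T}$. Given any quotient $j^{*}(T_{1})\twoheadrightarrow Y$ with $T_{1}\in\mathcal{T}$, apply the exact functor $j_{*}$ to obtain an epimorphism $j_{*}j^{*}(T_{1})\twoheadrightarrow j_{*}(Y)$ whose source lies in $\mathcal{T}$; closure of $\mathcal{T}$ under quotients gives $j_{*}(Y)\in\mathcal{T}$, and then $Y\cong j^{*}j_{*}(Y)\in j^{*}(\mathcal{T})$. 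The same recipe, now using exactness of $j_{*}$ on a short exact sequence and closure of $\mathcal{T}$ under extensions, shows $j^{*}(\mathcal{T})$ is extension-closed, so it is a torsion class. Functorial finiteness then follows from Lemma~\ref{covariant} applied to $(j_{!},j^{*})$ and Lemma~\ref{contravariant} applied to $(j^{*},j_{*})$, since $\mathcal{T}$ is already functorially finite. Remark~\ref{bijection}(1) now yields $T''=P(j^{*}(\mathcal{T}))$ with $\Gen T''=j^{*}(\mathcal{T})$. Part~(2)(c) is the exact dual: exactness of $j_{!}$ together with $j_{!}j^{*}(\mathcal{F})\subseteq\mathcal{F}$ and the isomorphism $j^{*}j_{!}\cong 1_{\mathcal{C}}$ show that $j^{*}(\mathcal{F})$ is closed under subobjects and extensions, and the same two approximation lemmas give functorial finiteness; the bijection produces $T''$ with $\mathcal{F}(T'')=j^{*}(\mathcal{F})$.

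For the ``moreover'' clauses, I invoke the Ma--Huang criterion quoted at the start of Section~\ref{tor}: $(j^{*}(\mathcal{T}),j^{*}(\mathcal{F}))$ is a torsion pair in $\modu\Lambda''$ if and only if $j_{*}j^{*}(\mathcal{F})\subseteq\mathcal{F}$. In (2)(b), if this condition holds, the torsion pair it produces must coincide with $(\Gen T'',\mathcal{F}(T''))$ by uniqueness (since $\Gen T''=j^{*}(\mathcal{T})$ already), giving $\mathcal{F}(T'')=j^{*}(\mathcal{F})$; conversely, $\mathcal{F}(T'')=j^{*}(\mathcal{F})$ exhibits $(j^{*}(\mathcal{T}),j^{*}(\mathcal{F}))$ as a torsion pair, triggering the Ma--Huang criterion. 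The case~(2)(c) is symmetric, pivoting on the equality $\Gen T''=j^{*}(\mathcal{T})$ rather than on the torsionfree side. The main obstacle I anticipate is the verification that $j^{*}(\mathcal{T})$ is closed under extensions in~(2)(b) (and the dual subobject closure in~(2)(c)): this is the unique step where both the exactness of $j_{*}$ (resp.\ $j_{!}$) and the invariance hypothesis are simultaneously essential, and one has to remember that $j^{*}(\mathcal{T})$ is not handed to us as the torsion class of some prescribed torsion pair but has to be independently recognised as one.
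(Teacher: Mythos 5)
Your proposal is correct and follows essentially the same route as the paper: parts (1) and (2)(a) via Theorem \ref{con-func} and the bijections of Remark \ref{bijection}, part (2)(b) by hand-checking that $j^{*}(\mathcal{T})$ is closed under quotients and extensions using exactness of $j_{*}$, $j_{*}j^{*}(\mathcal{T})\subseteq\mathcal{T}$ and $j^{*}j_{*}\cong 1$, then Lemmas \ref{covariant} and \ref{contravariant} for functorial finiteness, with (2)(c) dual and the ``moreover'' clauses settled by the Ma--Huang torsion-pair criterion. No gaps worth noting.
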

\begin{proof}
By assumption and Remark \ref{bijection}(1),  $\mathcal{T}$ and $\mathcal{F}$ are functorially finite in $\modu \Lambda$.
The assertions (1) and (2)(a) follow from Remark \ref{bijection}(1) and Theorem \ref{con-func}.

(2)(b) Let
$$\xymatrix@C=15pt{0\ar[r]&X\ar[r]&Y\ar[r]&Z\ar[r]&0}$$
be an exact sequence in $\modu \Lambda''$.
Since $j_{*}$ is exact by assumption, applying $j_{*}$ to the above exact sequence yields an exact sequence
$$\xymatrix@C=15pt{0\ar[r]&j_{*}(X)\ar[r]&j_{*}(Y)\ar[r]&j_{*}(Z)\ar[r]&0}$$
in $\modu \Lambda$.
Assume that $X$, $Z\in j^{*}(\mathcal{T})$, since $j_{*}j^{*}(\mathcal{T})\subseteq\mathcal{T}$, $j_{*}(X)$ and $j_{*}(Z)$ are in $\mathcal{T}$. It follows that $j_{*}(Y)\in \mathcal{T}$ since $\mathcal{T}$ is closed under extensions.
Since $j^{*}j_{*}\cong 1_{\modu \Lambda''}$ by Lemma \ref{lem-rec}(3), $Y\cong j^{*}j_{*}(Y)\in j^{*}(\mathcal{T})$. So $j^{*}(\mathcal{T})$ is closed under extensions.
Assume that $Y\in j^{*}(\mathcal{T})$, since $j_{*}j^{*}(\mathcal{T})\subseteq\mathcal{T}$,  $j_{*}(Y)\in \mathcal{T}$.
It follows that $j_{*}(Z)\in\mathcal{T}$ since $\mathcal{T}$ is closed under quotient modules, and since $j^{*}j_{*}\cong 1_{\modu \Lambda''}$ by Lemma \ref{lem-rec}(3), $Z\cong j^{*}j_{*}(Z)\in j^{*}(\mathcal{T})$. So $j^{*}(\mathcal{T})$ is closed under quotient modules. Thus $j^{*}(\mathcal{T})$ is a torsion class in $\modu \Lambda''$.
Moreover, notice that $\mathcal{T}$ is functorially finite, so by Lemmas \ref{covariant} and \ref{contravariant}, $j^{*}(\mathcal{T})$ is a functorially finite torsion class in $\modu \Lambda''$.
By Remark \ref{bijection}(1), there is a support $\tau$-tilting module $T''$ in $\modu \Lambda''$ with respect to $j^{*}(\mathcal{T})$.

Moreover, $T''$ induces a torsion pair $(\Gen T'',T''^{\perp_{0}})$ in $\modu \Lambda''$.
Again by Remark \ref{bijection}(1), $\Gen T''=j^{*}(T)$.
Furthermore, since $j_{*}j^{*}(\mathcal{F})\subseteq \mathcal{F}$ by assumption,
$(j^{*}(\mathcal{T}),j^{*}(\mathcal{F}))$ is a torsion pair in $\modu \Lambda''$ by Lemma \ref{3.1}, and hence $T''^{\perp_{0}}= j^{*}(\mathcal{F})$.

Conversely, assume that $\Gen T''= j^{*}(\mathcal{T})$ and $T''^{\perp_{0}}= j^{*}(\mathcal{F})$.
Since $(\Gen T'',T''^{\perp_{0}})$ is a torsion pair induced by $T''$,  $(j^{*}(\mathcal{T}),j^{*}(\mathcal{F}))$ is a torsion pair.
The assertion follows from Lemma \ref{3.1}.

(2)(c) It is similar to (2)(b).
\end{proof}

In particular, we can give an explicit construction for these support $\tau$-tilting modules in $\modu \Lambda''$ respectively under certain conditions.
\begin{proposition}\label{con-prop-st}
Let $(\modu \Lambda', \modu \Lambda, \modu \Lambda'')$ be a recollement of module categories, and let $T$ be a support $\tau$-tilting $\Lambda$-module and $(\mathcal{T},\mathcal{F}):=(\Gen T,T^{\perp_{0}})$ a torsion pair induced by $T$ in $\modu \Lambda$.
If $j_{!}$, $j_{*}$ are exact, and $j_{*}j^{*}(\mathcal{T})\subseteq\mathcal{T}$, then $j^{*}(T)$ is a support $\tau$-tilting $\Lambda''$-module. Moreover,  $j_{*}j^{*}(\mathcal{F})\subseteq\mathcal{F}$ if and only if $\Gen j^{*}(T)=j^{*}(\mathcal{T})$ and $(j^{*}(T))^{\perp_{0}}=j^{*}(\mathcal{F})$.
\end{proposition}
\begin{proof}
%(1) Since $i^{*}$ and $i^{!}$ are exact by assumption, by \cite[Lemma 2]{MXHZY}, we have $i^{*}\cong i^{!}$, so $i_{*}$ is the left adjoint of $i^{*}$.
%By Corollary \ref{con-cor-st}(1), we have that there is a support $\tau$-tilting module in $\modu \Lambda'$ with respect to $(i^{*}(\mathcal{T}),i^{!}(\mathcal{F}))$.
%It is easy to check that $i_{*}i^{*}(\mathcal{T})\subseteq \mathcal{T}$. Note that $i^{*}$ is exact by assumption, so by Proposition \ref{prop-adjoint}(1), we have $\Ext_{\Lambda'}^{1}(i^{*}(T),i^{*}(\mathcal{T}))\cong \Ext_{\Lambda}^{1}(T,i_{*}i^{*}(\mathcal{T}))=0$, and so $i^{*}(T)$ is $\Ext$-projective in $i^{*}(\mathcal{T})$.
%On the other hand, let $T'$ be an indecomposable $\Lambda'$-module such that $\Ext_{\Lambda'}^{1}(T',i^{*}(\mathcal{T}))=0$. It follows that $\Ext_{\Lambda}^{1}(i_{*}(T'),\mathcal{T})\cong \Ext_{\Lambda'}^{1}(T',i^{!}(\mathcal{T}))\cong \Ext_{\Lambda'}^{1}(T',i^{*}(\mathcal{T}))=0$ by Proposition \ref{prop-adjoint}(2).
%So $i_{*}(T')\in\add P(\mathcal{T})=\add T$ and $T'\cong i^{*}i_{*}(T')\in \add i^{*}(T)$.
 %Thus $i^{*}(T)$ is a support $\tau$-tilting module in $\modu \Lambda'$.
%It induces a torsion pair $(\Gen i^{*}(T),\mathcal{F}(i^{*}(T)))$. Notice that $\Gen i^{*}(T)=i^{*}(\mathcal{T})$, so $(\Gen i^{*}(T),\mathcal{F}(i^{*}(T)))=(i^{*}(\mathcal{T}),i^{!}(\mathcal{F}))$.
%(2)
By assumption and Corollary \ref{con-cor-st}(2)(b), there is a support $\tau$-tilting module in $\modu \Lambda''$ with respect to $j^{*}(\mathcal{T})$.
Since $j_{*}$ is exact and $j_{*}j^{*}(\mathcal{T})\subseteq\mathcal{T}$ by assumption, $\Ext_{\Lambda''}^{1}(j^{*}(T),j^{*}(\mathcal{T}))\cong \Ext_{\Lambda}^{1}(T,j_{*}j^{*}(\mathcal{T}))=0$ by Proposition \ref{prop-adjoint}(4), so $j^{*}(T)$ is $\Ext$-projective in $j^{*}(\mathcal{T})$. Let $T''$ be an indecomposable $\Ext$-projective object in $j^{*}(\mathcal{T})$. Notice that $j_{!}$ is exact by assumption, so $\Ext_{\Lambda}^{1}(j_{!}(T''),\mathcal{T})\cong \Ext_{\Lambda''}^{1}(T'',j^{*}(\mathcal{T}))=0$ by Proposition \ref{prop-adjoint}(3), and hence $j_{!}(T'')$ is $\Ext$-projective in $\mathcal{T}$. It follows that $j_{!}(T'')\in\add P(\mathcal{T})=\add T$ and $T''\cong j^{*}j_{!}(T'')\in \add j^{*}(T)$. Thus $j^{*}(T)$ is a support $\tau$-tilting $\Lambda$-module by Remark \ref{bijection}(1).

The second assertion follows from Corollary \ref{con-cor-st}(2)(b).
\end{proof}

\section{$\tau$-tilting modules in a recollement}

In Section 3, we have discussed the construction of support $\tau$-tilting modules in a recollement.  In this section, we further study how to construct $\tau$-tilting modules in a recollement. By  Adachi-Iyama-Reiten's correspondence given in Remark \ref{bijection}, we only need to discuss the case of sincere functorially finite torsion classes.

We need  an easy property of the composition series.

\begin{lemma}\label{lem-composition}
Let $\Lambda'$ and $\Lambda$ be artin algebras and $\xymatrix@C=15pt{F: \modu \Lambda'\ar[r]&\modu \Lambda}$ an exact functor, and let $M$ be a $\Lambda'$-module. If $F(S)$ is either simple or zero in $\modu \Lambda$ for any simple $\Lambda'$-module $S$, and if
$$\xymatrix@C+15pt{0= M_{0}\subseteq M_{1}\subseteq \cdots \subseteq M_{n-2}\subseteq M_{n-1}\subseteq M_{n}=M}$$
is a composition series of $M$,
then
$$\xymatrix@C+15pt{0= F(M_{0})\subseteq F(M_{1})\subseteq \cdots \subseteq F(M_{n-2})\subseteq F(M_{n-1})\subseteq F(M_{n})=F(M)}$$
is a submodule chain of $F(M)$ with $F(M_{i+1})/F(M_{i})$ either simple or zero in $\modu \Lambda$ for $0\leq i\leq n-1$.
\end{lemma}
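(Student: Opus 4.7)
The plan is to exploit the exactness of $F$ term by term along the composition series. For each $0 \le i \le n-1$ of the given composition series of $M$, I start from the short exact sequence
\[
\xymatrix@C=15pt{0\ar[r]&M_{i}\ar[r]&M_{i+1}\ar[r]&M_{i+1}/M_{i}\ar[r]&0}
\]
in $\modu \Lambda'$, where the quotient $S_{i}:=M_{i+1}/M_{i}$ is a simple $\Lambda'$-module by the definition of a composition series.

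Applying the exact functor $F$ yields a short exact sequence
\[
\xymatrix@C=15pt{0\ar[r]&F(M_{i})\ar[r]&F(M_{i+1})\ar[r]&F(S_{i})\ar[r]&0}
\]
in $\modu \Lambda$. In particular, the map $F(M_{i})\to F(M_{i+1})$ is a monomorphism, so $F(M_{i})$ is naturally a submodule of $F(M_{i+1})$. Concatenating these inclusions for $i=0,1,\dots,n-1$ (and noting $F(M_{0})=F(0)=0$ since $F$ is additive) produces the desired submodule chain
\[
\xymatrix@C+15pt{0= F(M_{0})\subseteq F(M_{1})\subseteq \cdots \subseteq F(M_{n-1})\subseteq F(M_{n})=F(M)}
\]
of $F(M)$ in $\modu \Lambda$.

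Finally, the exact sequence above identifies the successive quotients as $F(M_{i+1})/F(M_{i})\cong F(S_{i})$. By the standing hypothesis on $F$, each $F(S_{i})$ is either a simple $\Lambda$-module or zero, which is exactly the conclusion. The proof is essentially a mechanical unwinding of exactness, so no genuine obstacle is expected; the only small point to be careful about is to record that $F$ being additive sends the zero module $M_{0}=0$ to $F(M_{0})=0$, so that the chain indeed starts at zero.
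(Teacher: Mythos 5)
Your proof is correct and follows essentially the same route as the paper: apply the exact functor $F$ to the short exact sequences $0\to M_{i}\to M_{i+1}\to M_{i+1}/M_{i}\to 0$ coming from the composition series, and read off that $F(M_{i+1})/F(M_{i})\cong F(M_{i+1}/M_{i})$ is simple or zero by hypothesis. The added remark that additivity gives $F(M_{0})=0$ is a harmless (and fine) extra detail.
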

\begin{proof}
By assumption, there are exact sequences
$$\xymatrix@C=15pt{0\ar[r]&M_{i}\ar[r]&M_{i+1}\ar[r]&S_{i+1}\ar[r]&0}$$
in $\modu \Lambda'$ with $S_{i+1}$ simple $\Lambda'$-modules, where $0\leq i\leq n-1$.
Since $F$ is exact, applying $F$ to the above exact sequences
yields the following exact sequences
$$\xymatrix@C=15pt{0\ar[r]&F(M_{i})\ar[r]&
F(M_{i+1})\ar[r]&F(S_{i+1})\ar[r]&0}.$$
Thus the assertion follows from the assumption that $F(S_{i+1})$ is either simple or zero in $\modu \Lambda$.
\end{proof}

From now on, assume that $(\modu \Lambda',\modu \Lambda,\modu \Lambda'')$ is a recollement of module categories:
$$\xymatrix{\modu \Lambda'\ar[rr]!R|-{i_{*}}&&\ar@<-2ex>[ll]!R|-{i^{*}}
\ar@<2ex>[ll]!R|-{i^{!}}\modu \Lambda
\ar[rr]!L|-{j^{*}}&&\ar@<-2ex>[ll]!L|-{j_{!}}\ar@<2ex>[ll]!L|-{j_{*}}
\modu \Lambda''.}$$
We can show that, under certain conditions, the glued  torsion class is sincere functorially finite if the original  torsion classes  are sincere functorially finite.

\begin{theorem}\label{sincere}
Let $(\mathcal{T}',\mathcal{F}')$ and $(\mathcal{T}'',\mathcal{F}'')$ be torsion pairs in $\modu \Lambda'$ and $\modu \Lambda''$
respectively, and $(\mathcal{T},\mathcal{F})$ a glued torsion pair with respect to $(\mathcal{T}',\mathcal{F}')$ and $(\mathcal{T}'',\mathcal{F}'')$. Assume that $\mathcal{T'}$ and $\mathcal{T''}$ are sincere functorially finite. If one of the following conditions is satisfied
\begin{itemize}
\item[(1)] $i^{!}$ is exact and $i_{*}i^{!}(\mathcal{T})\subseteq\mathcal{T}$,
    \item[(2)] $i^{*}$ is exact and $i_{*}i^{*}(\mathcal{F})\subseteq\mathcal{F}$,
    \end{itemize}
    then $\mathcal{T}$ is sincere functorially finite.
\end{theorem}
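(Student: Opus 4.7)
The plan is to split the claim into two independent assertions: functorial finiteness and sincerity. The functorial finiteness of $\mathcal{T}$ in each case is already settled by Theorem \ref{rec-func}, so the whole substance of the proof is to produce a sincere module $M\in\mathcal{T}$. By hypothesis, pick sincere modules $M'\in\mathcal{T}'$ and $M''\in\mathcal{T}''$.

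In case (1), where $i^{!}$ is exact, I would take $M:=i_{*}(M')\oplus j_{*}(M'')$. First I must verify $M\in\mathcal{T}$. For the summand $i_{*}(M')$ this is automatic: $i^{*}i_{*}(M')\cong M'\in\mathcal{T}'$ and $j^{*}i_{*}(M')=0\in\mathcal{T}''$ by Lemma \ref{lem-rec}(1),(3). For $j_{*}(M'')$, exactness of $i^{!}$ gives $i^{*}j_{*}=0$ by Lemma \ref{lem-rec}(6), so $i^{*}j_{*}(M'')=0\in\mathcal{T}'$, while $j^{*}j_{*}(M'')\cong M''\in\mathcal{T}''$; hence $j_{*}(M'')\in\mathcal{T}$. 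Symmetrically, in case (2) where $i^{*}$ is exact, I would take $M:=i_{*}(M')\oplus j_{!}(M'')$. The inclusion $i_{*}(M')\in\mathcal{T}$ is as before, and for $j_{!}(M'')$ we use $i^{*}j_{!}=0$ (Lemma \ref{lem-rec}(1)) and $j^{*}j_{!}\cong 1$ to conclude $j_{!}(M'')\in\mathcal{T}$.

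Second, I would establish sincerity via Lemma \ref{lem-composition}. In case (1), Lemma \ref{lem-simple} tells us that the simple $\Lambda$-modules are precisely the $i_{*}(S')$ with $S'\in\modu\Lambda'$ simple and the $j_{*}(S'')$ with $S''\in\modu\Lambda''$ simple. Both $i_{*}$ and $j_{*}$ are exact (the latter by Lemma \ref{lem-2.6}(2)), and both send simples to simples by Lemma \ref{lem-simple}. Therefore, applying these functors to composition series of $M'$ and $M''$ produces, by Lemma \ref{lem-composition}, submodule chains of $i_{*}(M')$ and $j_{*}(M'')$ whose successive quotients are exactly the simples $i_{*}(S')$ and $j_{*}(S'')$ (or zero). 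Sincerity of $M'$ and $M''$ therefore implies that every simple $\Lambda$-module appears as a composition factor of $M$. Case (2) is identical with $j_{!}$ replacing $j_{*}$: exactness of $j_{!}$ is Lemma \ref{lem-2.6}(1), and Lemma \ref{lem-simple} again guarantees that $j_{!}$ sends simples to simples.

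The only potential pitfall, and hence the step I would be most careful about, is the bookkeeping in Lemma \ref{lem-simple} and Lemma \ref{lem-composition}: one must apply the correct version of each lemma matched to which of $i^{*}$ or $i^{!}$ is exact, and one must verify that every simple $\Lambda$-module is exhausted by the two families arising from $\modu\Lambda'$ and $\modu\Lambda''$. Once this is done, combining the two halves gives a sincere $M\in\mathcal{T}$, and together with Theorem \ref{rec-func} this completes the proof.
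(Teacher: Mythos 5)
Your proposal is correct and follows essentially the same route as the paper: the same sincere object $i_{*}(M')\oplus j_{*}(M'')$ in case (1) (resp. $i_{*}(M')\oplus j_{!}(M'')$ in case (2)), verified to lie in $\mathcal{T}$ by the same adjunction identities, with sincerity obtained from Lemmas \ref{lem-simple} and \ref{lem-composition} together with exactness of $i_{*}$ and of $j_{*}$ (resp. $j_{!}$) from Lemma \ref{lem-2.6}. The one nuance to tighten: in case (2), Theorem \ref{rec-func}(2) only yields functorial finiteness of $\mathcal{F}$ (and its hypotheses are on $\mathcal{F}'$, $\mathcal{F}''$), so you need Remark \ref{bijection}(1) to transfer functorial finiteness between torsionfree and torsion classes, which is exactly how the paper handles that case.
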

\begin{proof}
Since $\mathcal{T'}$ and $\mathcal{T''}$ are sincere by assumption, there exist objects $A'\in \mathcal{T'}$ and $A''\in \mathcal{T''}$ such that all simple modules appear as  composition factors of $A'$ and $A''$ in $\modu \Lambda'$ and $\modu \Lambda''$ respectively.

(1) By Theorem \ref{rec-func}(1),  $\mathcal{T}$ is functorially finite.
 Since $\Ima i_{*}=\Ker j^{*}$ and $i^{*}i_{*}\cong 1_{\modu \Lambda'}$ (by Lemma \ref{lem-rec}(3)), $j^{*}i_{*}(A')=0$ and $i^{*}i_{*}(A')\cong A'\in\mathcal{T'}$, so $i_{*}(A')\in\mathcal{T}$. On the other hand, since $i^{!}$ is exact,  $i^{*}j_{*}=0$ by Lemma \ref{lem-rec}(6), it follows that $i^{*}j_{*}(A'')=0$.
Notice that $j^{*}j_{*}\cong 1_{\modu \Lambda''}$ by Lemma \ref{lem-rec}(3), so $j^{*}j_{*}(A'')\cong A''\in\mathcal{T''}$, and hence $j_{*}(A'')\in \mathcal{T}$. Set $A:=i_{*}(A')\oplus j_{*}(A'')$. Then $A\in\mathcal{T}$.

Since $i^{!}$ is exact by assumption, $j_{*}$ is exact by Lemma \ref{lem-2.6}. Note that $i_{*}$ is also exact, so by Lemmas \ref{lem-composition} and \ref{lem-simple} all simple modules appear as a composition factor of $A$ in $\modu \Lambda$. Thus $\mathcal{T}$ is sincere.

(2) By Theorem \ref{rec-func}(2),  $\mathcal{F}$ is functorially finite. So by Remark \ref{bijection}(1), $\mathcal{T}$ is functorially finite. As a similar argument to (1), $A:=i_{*}(A')\oplus j_{!}(A'')\in \mathcal{T}$. Since $i^{*}$ is exact by assumption, by Lemma \ref{lem-2.6}, $j_{!}$ is exact. Note that $i_{*}$ is also exact, so by Lemmas \ref{lem-composition} and \ref{lem-simple} all simple modules appear as a composition factor of $A$ in $\modu \Lambda$. Thus $\mathcal{T}$ is sincere.
\end{proof}

Following this theorem and  Adachi-Iyama-Reiten's correspondence, we obtain the following corollary.

\begin{corollary}\label{cor-tau}
 Let $T'$ and $T''$ be $\tau$-tilting modules in $\modu \Lambda'$ and $\modu \Lambda''$ respectively. If one of the following conditions is satisfied
  \begin{itemize}
  \item[(1)] $i^{!}$ is exact and $i_{*}i^{!}(\mathcal{T})\subseteq\mathcal{T}$,
       \item[(2)] $i^{*}$ is exact and $i_{*}i^{*}(\mathcal{F})\subseteq\mathcal{F}$,
       \end{itemize}
       then there is a $\tau$-tilting module $T$ such that $(\mathcal{T},\mathcal{F})=(\Gen T,T^{\perp_{0}})$, where  $(\mathcal{T},\mathcal{F})$ is a glued torsion pair in $\modu \Lambda$ with respect to $(\Gen T', T'^{\perp_{0}})$ and $(\Gen T'', T''^{\perp_{0}})$.
 \end{corollary}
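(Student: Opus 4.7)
The plan is to package Theorem 4.1 (on sincere glued torsion classes) together with Corollary 3.3 (existence of a support $\tau$-tilting module realizing the glued torsion pair) via the Adachi--Iyama--Reiten correspondence recorded in Remark \ref{bijection}(2).

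First, since $T'$ and $T''$ are $\tau$-tilting, Remark \ref{bijection}(2) tells us that $\Gen T' \in sf\text{-tors}\,\Lambda'$ and $\Gen T'' \in sf\text{-tors}\,\Lambda''$; in particular both torsion classes are sincere and functorially finite. Thus the pairs $(\Gen T',\mathcal{F}(T'))$ and $(\Gen T'',\mathcal{F}(T''))$ satisfy the hypotheses of Theorem \ref{sincere}. Under either condition (1) or (2) of the statement, Theorem \ref{sincere} applies directly and yields that the glued torsion class $\mathcal{T}$ is sincere functorially finite in $\modu \Lambda$.

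Next, I would invoke Corollary \ref{cor-st} (our first main theorem). The hypotheses of that corollary are identical to the hypotheses (1) and (2) of the present statement, so it produces a basic support $\tau$-tilting $\Lambda$-module $T$ with $(\mathcal{T},\mathcal{F}) = (\Gen T, \mathcal{F}(T))$. Combining with the previous paragraph, $\Gen T = \mathcal{T}$ is a sincere functorially finite torsion class in $\modu \Lambda$, i.e.\ $\Gen T \in sf\text{-tors}\,\Lambda$. By the bijection $\tau\text{-tilt}\,\Lambda \leftrightarrow sf\text{-tors}\,\Lambda$ of Remark \ref{bijection}(2), the module $T$ must then be $\tau$-tilting, which is exactly the conclusion sought.

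There is essentially no new obstacle: everything reduces to the two previously established pillars (sincerity of the glued torsion class, and existence of a support $\tau$-tilting module realizing it), glued together by the bijection between $\tau$-tilting modules and sincere functorially finite torsion classes. The only point demanding a moment's care is checking that the hypotheses of Theorem \ref{sincere} and Corollary \ref{cor-st} coincide with those stated here, which is immediate by inspection since both require exactly the same exactness condition on $i^{!}$ or $i^{*}$ together with the same stability condition on $\mathcal{T}$ or $\mathcal{F}$.
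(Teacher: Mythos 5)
Your proposal is correct and follows essentially the same route as the paper: the paper derives this corollary directly from Theorem \ref{sincere} together with the Adachi--Iyama--Reiten bijection of Remark \ref{bijection}(2), and your extra appeal to Corollary \ref{cor-st} for the existence of the support $\tau$-tilting module realizing $(\mathcal{T},\mathcal{F})$ is only a harmless repackaging of that same correspondence. The final step, that a basic support $\tau$-tilting $T$ with $\Gen T$ sincere functorially finite is $\tau$-tilting, is exactly what Remark \ref{bijection}(2) (via $\add T=\add P(\Gen T)$) provides, so no gap remains.
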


Now, we can give a construction of $\tau$-tilting modules in a recollement.

\begin{proposition}\label{rec-tautilting}
 Let $T'$ and $T''$ be $\tau$-tilting modules in $\modu \Lambda'$ and $\modu \Lambda''$ respectively. If $i^{!}$, $j_{!}$ are exact and $i_{*}i^{!}(\mathcal{T})\subseteq \mathcal{T}$, then $T:=i_{*}(T')\oplus j_{!}(T'')$ is a $\tau$-tilting $\Lambda$-module and $(\mathcal{T},\mathcal{F})=(\Gen T,T^{\perp_{0}})$, where  $(\mathcal{T},\mathcal{F})$ is a glued torsion pair in $\modu \Lambda$ with respect to $(\Gen T', T'^{\perp_{0}})$ and $(\Gen T'', T''^{\perp_{0}})$.
 \end{proposition}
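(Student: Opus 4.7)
The strategy is to apply Corollary~\ref{cor-tau}(1) to obtain some $\tau$-tilting module $\widetilde{T}$ with $(\Gen\widetilde{T},\mathcal{F}(\widetilde{T}))=(\mathcal{T},\mathcal{F})$, and then to identify $T=i_{*}(T')\oplus j_{!}(T'')$ with $\widetilde{T}$ up to additive equivalence via the bijection in Remark~\ref{bijection}(1), which tells us $\add\widetilde{T}=\add P(\mathcal{T})$ and $|\widetilde{T}|=|\Lambda|$. It therefore suffices to verify three things for $T$: membership in $\mathcal{T}$, $\Ext$-projectivity in $\mathcal{T}$, and the count $|T|=|\Lambda|$.

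The first is immediate from the description of the glued torsion class together with the identities $i^{*}i_{*}\cong 1$, $j^{*}j_{!}\cong 1$, $j^{*}i_{*}=0$, $i^{*}j_{!}=0$ of Lemma~\ref{lem-rec}(1),(3). For the second, I use that $j_{!}$ is exact by hypothesis and that $i^{!}$ is exact by hypothesis, which via Proposition~\ref{prop-adjoint}(2),(3) yields the adjunction isomorphisms $\Ext^{1}_{\Lambda}(i_{*}(T'),X)\cong\Ext^{1}_{\Lambda'}(T',i^{!}(X))$ and $\Ext^{1}_{\Lambda}(j_{!}(T''),X)\cong\Ext^{1}_{\Lambda''}(T'',j^{*}(X))$ for every $X\in\mathcal{T}$. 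The key input here is the hypothesis $i_{*}i^{!}(\mathcal{T})\subseteq\mathcal{T}$: via Lemma~\ref{lem-3.1}(1) it yields $i^{!}(\mathcal{T})=i^{*}(\mathcal{T})\subseteq\Gen T'$, while $j^{*}(\mathcal{T})\subseteq\Gen T''$ is built into the glued torsion class; both Ext groups then vanish by the $\tau$-rigidity of $T'$ and $T''$.

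For the count, Lemma~\ref{lem-simple} (available since $i^{!}$ is exact) gives $|\Lambda|=|\Lambda'|+|\Lambda''|$; by Lemma~\ref{lem-fullf} the fully faithful functors $i_{*}$ and $j_{!}$ preserve the number of indecomposable summands, so $|i_{*}(T')|=|T'|=|\Lambda'|$ and $|j_{!}(T'')|=|T''|=|\Lambda''|$ using that $T'$ and $T''$ are $\tau$-tilting. To rule out cross-identifications, if $i_{*}(T'_{1})\cong j_{!}(T''_{1})$ for indecomposable summands, then applying $i^{*}$ together with $i^{*}j_{!}=0$ and $i^{*}i_{*}\cong 1$ forces $T'_{1}=0$, a contradiction; hence $|T|=|\Lambda|$. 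Combining, every indecomposable summand of $T$ is an indecomposable $\Ext$-projective object in $\mathcal{T}$, so $\add T\subseteq\add P(\mathcal{T})=\add\widetilde{T}$, and the equal counts force $\add T=\add\widetilde{T}$, from which $(\Gen T,\mathcal{F}(T))=(\mathcal{T},\mathcal{F})$ and the $\tau$-tilting property of $T$ both follow. The delicate point is the simultaneous use of exactness of $i^{!}$ (for Lemma~\ref{lem-simple}, Proposition~\ref{prop-adjoint}(2), and Lemma~\ref{lem-3.1}) and of $j_{!}$ (for Proposition~\ref{prop-adjoint}(3)), since $i^{*}$ is not assumed exact and so Proposition~\ref{prop-st} cannot be invoked directly.
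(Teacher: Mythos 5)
Your proposal is correct and follows essentially the same route as the paper's own proof: obtain $\overline{T}=P(\mathcal{T})$ from Corollary~\ref{cor-tau}, prove $\Ext$-projectivity of $i_{*}(T')\oplus j_{!}(T'')$ in $\mathcal{T}$ via Proposition~\ref{prop-adjoint}(2),(3) and Lemma~\ref{lem-3.1}(1), and then match the counts using Lemmas~\ref{lem-fullf} and~\ref{lem-simple} to conclude $\add T=\add\overline{T}$. Your explicit verification that $T\in\mathcal{T}$ and that no cross-identification $i_{*}(T'_{1})\cong j_{!}(T''_{1})$ can occur only makes precise steps the paper leaves implicit.
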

\begin{proof}
By Corollary \ref{cor-tau}(1), there is a $\tau$-tilting module $\overline{T}$ such that $(\mathcal{T},\mathcal{F})=(\Gen \overline{T},\overline{T}^{\perp_{0}})$.
By Remark \ref{bijection}(1), we can take $\overline{T}=P(\mathcal{T})$.

Since $i_{*}i^{!}(\mathcal{T})\subseteq \mathcal{T}$ by assumption, by Lemma \ref{lem-3.1}(1), $i^{*}(\mathcal{T})=i^{!}(\mathcal{T})$.
Notice that $i^{!}$ and $j_{!}$ are exact by assumption, so by Proposition \ref{prop-adjoint}(2) and (3),
\begin{align*}
\Ext_{\Lambda}^{1}(T,\mathcal{T})&= \Ext_{\Lambda}^{1}(i_{*}(T')\oplus j_{!}(T''),\mathcal{T})\\
&= \Ext_{\Lambda}^{1}(i_{*}(T'),\mathcal{T})\oplus \Ext_{\Lambda}^{1}(j_{!}(T''),\mathcal{T})\\
&\cong \Ext_{\Lambda'}^{1}(T',i^{!}(\mathcal{T})) \oplus \Ext_{\Lambda''}^{1}(T'',j^{*}(\mathcal{T}))\\
&=0.
\end{align*}
It follows that $T$ is an $\Ext$-projective object in $\mathcal{T}$. Note that all indecomposable direct summands of $T$ are pairwise nonisomorphic, so $T$ is isomorphic to a direct summand of $\overline{T}$.
On the other hand,
\begin{align*}
|T|=&|i_{*}(T')|+|j_{!}(T'')|\\
=&|T''|+|T''|\text{\ \ \ \  (by Lemma \ref{lem-fullf})}\\
=&|\Lambda'|+|\Lambda''|\\
=&|\Lambda|\text{ \ \ \ \ (by Lemma \ref{lem-simple})}\\
=&|\overline{T}|.
\end{align*}
Thus $T\cong \overline{T}$ is a $\tau$-tilting module in $\modu \Lambda$.
\end{proof}

In the above, we have shown how to glue sincere functorially finite torsion classes from the edges to the middle in a recollement. Next, we will consider how to construct sincere functorially finite torsion classes from the middle to the edges in a recollement.

\begin{theorem}\label{sincere-fun}
Let $(\mathcal{T},\mathcal{F})$ be a torsion pair in $\modu \Lambda$. Assume that $\mathcal{T}$ is sincere functorially finite.
\begin{itemize}
\item[(1)] If $i^{*}$ has a left adjoint, then $(i^{*}(\mathcal{T}),i^{!}(\mathcal{F}))$ is a torsion pair with $i^{*}(\mathcal{T})$ sincere functorially finite in $\modu \Lambda'$.
    \item[(2)] Assume $i^{*}$ or $i^{!}$ is exact.
     \begin{itemize}
\item[(a)] If $j_{*}j^{*}(\mathcal{F})\subseteq \mathcal{F}$, then $(j^{*}(\mathcal{T}),j^{*}(\mathcal{F}))$ is a torsion pair with $j^{*}(\mathcal{T})$ sincere functorially finite in $\modu \Lambda''$.
\item[(b)] If $j_{*}$ is exact and $j_{*}j^{*}(\mathcal{T})\subseteq\mathcal{T}$, then $j^{*}(\mathcal{T})$ is a sincere functorially finite torsion class in $\modu \Lambda''$.
\end{itemize}
        \end{itemize}
\end{theorem}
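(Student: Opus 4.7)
The plan is to split each of the three assertions into two pieces: (i) the torsion-pair structure together with functorial finiteness, which is already covered by Theorem \ref{con-func} (plus, for (2)(b), the argument in Corollary \ref{con-cor-st}(2)(b)); and (ii) the sincerity of the image class, which is the genuinely new content.

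For (i) there is almost nothing to do. In part (1), $i^{*}$ having a left adjoint makes $i^{*}$ preserve limits, hence left exact; combined with its right exactness from Lemma \ref{lem-rec}(2) this forces $i^{*}$ to be exact, and Theorem \ref{con-func}(1)(a) delivers the torsion pair $(i^{*}(\mathcal{T}),i^{!}(\mathcal{F}))$ with $i^{*}(\mathcal{T})$ functorially finite. Part (2)(a) is directly Theorem \ref{con-func}(1)(b). For part (2)(b) I would repeat the argument in the proof of Corollary \ref{con-cor-st}(2)(b): lift short exact sequences in $\modu\Lambda''$ along the exact functor $j_{*}$, then exploit $j_{*}j^{*}(\mathcal{T})\subseteq\mathcal{T}$ together with $j^{*}j_{*}\cong 1_{\modu\Lambda''}$ and the fact that $\mathcal{T}$ is closed under extensions and quotients in $\modu\Lambda$ to conclude that $j^{*}(\mathcal{T})$ is a torsion class in $\modu\Lambda''$, with functorial finiteness coming from Lemmas \ref{covariant} and \ref{contravariant}.

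The heart of the proof is sincerity. Fix a sincere module $A\in\mathcal{T}$; I will show that $i^{*}(A)$, respectively $j^{*}(A)$, is sincere in the relevant image class by applying Lemma \ref{lem-composition} to a composition series of $A$. The key inputs are Lemma \ref{lem-simple}, which classifies simple $\Lambda$-modules as $i_{*}(S')\sqcup j_{!}(S'')$ when $i^{*}$ is exact, and as $i_{*}(S')\sqcup j_{*}(S'')$ when $i^{!}$ is exact, together with the standard identities $i^{*}i_{*}\cong\id$, $j^{*}j_{*}\cong\id$, $j^{*}j_{!}\cong\id$, $j^{*}i_{*}=0$, $i^{*}j_{!}=0$, $i^{!}j_{*}=0$. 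For part (1), $i^{*}$ is exact and sends each simple $\Lambda$-module to either a simple $\Lambda'$-module or $0$, so Lemma \ref{lem-composition} produces from a composition series of $A$ a filtration of $i^{*}(A)$ whose nonzero consecutive quotients exhaust the simples $S'=i^{*}i_{*}(S')$, because every $i_{*}(S')$ already appears as a composition factor of the sincere module $A$; thus $i^{*}(A)\in i^{*}(\mathcal{T})$ is sincere. For parts (2)(a) and (2)(b), $j^{*}$ is always exact, and whichever of $i^{*}$ or $i^{!}$ is exact Lemma \ref{lem-simple} ensures that $j^{*}$ sends the simples of $\modu\Lambda$ to either a simple of $\modu\Lambda''$ or $0$; the same composition-series argument then shows that every simple $\Lambda''$-module appears as a composition factor of $j^{*}(A)$, giving sincerity of $j^{*}(\mathcal{T})$.

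The main obstacle, I expect, is the bookkeeping in Lemma \ref{lem-composition} rather than anything conceptual: one must be sure that every simple $\Lambda'$-module (resp.\ $\Lambda''$-module) really is hit by $i^{*}$ (resp.\ $j^{*}$) applied to some composition factor of $A$, and this requires the exactness extracted from the adjoint hypothesis in (1), and the hypothesis "$i^{*}$ or $i^{!}$ exact" in (2). Once these are secured, the three cases run in parallel and reduce to standard identities in the recollement.
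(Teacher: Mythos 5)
Your proposal is correct and follows essentially the same route as the paper: torsion-pair structure and functorial finiteness via Theorem \ref{con-func} (and the Corollary \ref{con-cor-st}(2)(b) argument for part (2)(b)), exactness of $i^{*}$ extracted from the left-adjoint hypothesis in (1), and sincerity obtained by applying Lemma \ref{lem-composition} to a composition series of a sincere module $A\in\mathcal{T}$ together with the classification of simple $\Lambda$-modules in Lemma \ref{lem-simple} and the identities $i^{*}i_{*}\cong\id$, $j^{*}j_{!}\cong\id$, $j^{*}j_{*}\cong\id$, $j^{*}i_{*}=0$, $i^{*}j_{!}=0$. The only cosmetic difference is that you treat both exactness cases in (2) explicitly where the paper proves one and declares the other similar.
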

\begin{proof}
Since $\mathcal{T}$ is sincere, there exists a $\Lambda$-module $A$ in $\mathcal{T}$ such that all simple $\Lambda$-modules appear as a composition factor of $A$.

(1) By Lemma \ref{3.1},  $(i^{*}(\mathcal{T}),i^{!}(\mathcal{F}))$ is a torsion pair in $\modu \Lambda'$. Since $i^{*}$ has a left adjoint, by Theorem \ref{con-func}, $i^{*}(\mathcal{T})$ is functorially finite. Since $i^{*}$ has a left adjoint,  $i^{*}$ is left exact. Notice that $i^{*}$ is right exact by Lemma \ref{lem-rec}(2), so $i^{*}$ is exact. Then by Lemma \ref{lem-simple},  every simple $\Lambda$-module is of the form either $i_{*}(S')$ or $j_{!}(S'')$, where $S'$ is a simple $\Lambda'$-module and $S''$ is a simple $\Lambda''$-module. Then the classes of $\{i_{*}(S'): \text{the simple }\Lambda'\text{-module }S'\}$ and $\{j_{!}(S''): \text{the simple }\Lambda''\text{-module }S''\}$ appear as a composition factor of $A$. Since $i^{*}$ is exact, and $i^{*}j_{!}(S'')=0$ (by Lemma \ref{lem-rec}(1)) and $i^{*}i_{*}(S')\cong S'$ (by Lemma \ref{lem-rec}(3)), by Lemma \ref{lem-composition}, all simple $\Lambda'$-modules appear as a composition factor of $i^{*}(A)$.

(2)(a) We only need to prove the case that $i^{*}$ is exact; the case that $i^{!}$ is exact is similar.
By assumption and Lemma \ref{3.1},  $(j^{*}(\mathcal{T}),j^{*}(\mathcal{F}))$ is a torsion pair in $\modu \Lambda''$.
By assumption and Theorem \ref{con-func}(1)(b),  $j^{*}(\mathcal{T})$ is a functorially finite torsion class.
Since $i^{*}$ is exact by assumption, by Lemma \ref{lem-simple},  every simple $\Lambda$-module is of the form either $i_{*}(S')$ or $j_{!}(S'')$, where $S'$ is a simple $\Lambda'$-module and $S''$ is a simple $\Lambda''$-module.
Then the classes of $\{i_{*}(S'): \text{the simple }\Lambda'\text{-module }S'\}$ and $\{j_{!}(S''): \text{the simple }\Lambda''\text{-module }S''\}$ appear as a composition factor of $A$.
Since $\Ima i_{*}=\Ker j^{*}$ and $j^{*}j_{!}\cong 1_{\modu \Lambda''}$ (by Lemma \ref{lem-rec}(3)),  $j^{*}i_{*}(S')=0$ and $j^{*}j_{!}(S'')\cong S''$.
Moreover, since $j^{*}$ is exact (by Lemma \ref{lem-rec}(2)), by Lemma \ref{lem-composition}, all simple $\Lambda''$-module appear as a composition factor of $j^{*}(A)$. Thus $j^{*}(\mathcal{T})$ is sincere.

(2)(b) As a similar proof to that of Corollary \ref{con-cor-st}(2)(b),  $j^{*}(\mathcal{T})$ is a functorially finite torsion class in $\modu \Lambda''$. Moreover, as a similar argument to (2)(a), we get that $j^{*}(\mathcal{T})$ is sincere.
\end{proof}

Immediately, we have the following corollary.

\begin{corollary}\label{con-cor-tau}
Let $T$ be a $\tau$-tilting $\Lambda$-module, and $(\mathcal{T},\mathcal{F}):=(\Gen T,T^{\perp_{0}})$ a torsion pair induced by $T$ in $\modu \Lambda$.
\begin{itemize}
\item[(1)] If $i^{*}$ has a left adjoint, then there is a $\tau$-tilting module $T'$ in $\modu \Lambda'$ such that $(\Gen T',T'^{\perp_{0}})=(i^{*}(\mathcal{T}),i^{!}(\mathcal{F}))$.
    \item[(2)] Assume $i^{*}$ or $i^{!}$ is exact.
     \begin{itemize}
\item[(a)] If $j_{*}j^{*}(\mathcal{F})\subseteq \mathcal{F}$, then there is a $\tau$-tilting $\Lambda''$-module $T''$ such that $(\Gen T'',T''^{\perp_{0}})=(j^{*}(\mathcal{T}),j^{*}(\mathcal{F}))$.
\item[(b)] If $j_{*}$ is exact and $j_{*}j^{*}(\mathcal{T})\subseteq\mathcal{T}$, then there is a $\tau$-tilting $\Lambda''$-module $T''$ with respect to $j^{*}(\mathcal{T})$. Moreover,  $j_{*}j^{*}(\mathcal{F})\subseteq \mathcal{F}$ if and only if $\Gen T''=j^{*}(\mathcal{T})$ and $T''^{\perp_{0}}=j^{*}(\mathcal{F})$.
\end{itemize}
        \end{itemize}
\end{corollary}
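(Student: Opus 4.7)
The statement is set up so that it follows almost mechanically from the sincere-functorially-finite-torsion-class results established just before, combined with the Adachi--Iyama--Reiten bijection between $\tau$-tilting modules and sincere functorially finite torsion classes. My plan is therefore to apply Remark~\ref{bijection}(2) to convert the hypothesis on $T$ into a hypothesis on $\mathcal{T}$, feed it into Theorem~\ref{sincere-fun}, and then apply Remark~\ref{bijection}(2) in the reverse direction on the smaller categories.

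First I would note: since $T$ is a $\tau$-tilting $\Lambda$-module, Remark~\ref{bijection}(2) gives that $\mathcal{T}=\Gen T$ is a sincere functorially finite torsion class in $\modu\Lambda$. This puts us in the hypothesis of Theorem~\ref{sincere-fun}. For part~(1), the left adjoint of $i^{*}$ gives by Theorem~\ref{sincere-fun}(1) that $i^{*}(\mathcal{T})$ is a sincere functorially finite torsion class in $\modu\Lambda'$, paired with torsionfree class $i^{!}(\mathcal{F})$. Applying the bijection of Remark~\ref{bijection}(2) to $i^{*}(\mathcal{T})$ produces the desired $\tau$-tilting $\Lambda'$-module $T'$, and the induced torsion pair is exactly $(\Gen T',\mathcal{F}(T'))=(i^{*}(\mathcal{T}),i^{!}(\mathcal{F}))$ because the torsionfree class of a torsion pair is determined by its torsion class.

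For part~(2)(a), under the hypothesis that $i^{*}$ or $i^{!}$ is exact together with $j_{*}j^{*}(\mathcal{F})\subseteq\mathcal{F}$, Theorem~\ref{sincere-fun}(2)(a) yields that $(j^{*}(\mathcal{T}),j^{*}(\mathcal{F}))$ is a torsion pair in $\modu\Lambda''$ with $j^{*}(\mathcal{T})$ sincere functorially finite; Remark~\ref{bijection}(2) then produces the $\tau$-tilting $\Lambda''$-module $T''$ with $\Gen T''=j^{*}(\mathcal{T})$ and $\mathcal{F}(T'')=j^{*}(\mathcal{F})$. For part~(2)(b), the exactness of $j_{*}$ together with $j_{*}j^{*}(\mathcal{T})\subseteq\mathcal{T}$ gives by Theorem~\ref{sincere-fun}(2)(b) that $j^{*}(\mathcal{T})$ is a sincere functorially finite torsion class in $\modu\Lambda''$, so Remark~\ref{bijection}(2) again supplies the required $\tau$-tilting module $T''$ with $\Gen T''=j^{*}(\mathcal{T})$.

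The remaining ``moreover'' equivalence in (2)(b) is precisely the support-$\tau$-tilting version already proved in Corollary~\ref{con-cor-st}(2)(b); since a $\tau$-tilting module is a special case of a support $\tau$-tilting module, that statement applies verbatim here. There is essentially no new technical obstacle: the only thing one must check carefully is that the $\tau$-tilting property (as opposed to support $\tau$-tilting) is transferred correctly, and this is handled exactly by the bijection of Remark~\ref{bijection}(2) between $\tau$-tilt and $sf$-tors, because Theorem~\ref{sincere-fun} gives us sincerity of the induced torsion classes. So the proof is a short deduction with no hidden difficulty.
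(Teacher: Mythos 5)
Your proposal is correct and follows essentially the same route as the paper: translate the $\tau$-tilting hypothesis into a sincere functorially finite torsion class via Remark \ref{bijection}(2), apply Theorem \ref{sincere-fun}, and transfer back, with the ``moreover'' equivalence in (2)(b) obtained from Corollary \ref{con-cor-st}(2)(b). Your observation that the support-$\tau$-tilting version applies directly (since the module with respect to $j^{*}(\mathcal{T})$ is the same $P(j^{*}(\mathcal{T}))$ in both statements) is just a slightly more explicit phrasing of the paper's ``the proof is similar'' remark, not a different argument.
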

\begin{proof}
By Remark \ref{bijection}(2) and Theorem \ref{sincere-fun}, we only need to prove the last assertion.
But the proof of the last assertion is similar to that of Corollary \ref{con-cor-st}(2)(b).
\end{proof}

The following result is a special case of Proposition \ref{con-prop-st}.
\begin{proposition}\label{prop-t}
Let $T$ be a $\tau$-tilting $\Lambda$-module, and let $(\mathcal{T},\mathcal{F}):=(\Gen T,T^{\perp_{0}})$ be a torsion pair induced by $T$ in $\modu \Lambda$.
\begin{itemize}
\item[(a)] If $i^{!}$ and $j_{!}$ are exact, then $j^{*}(T)$ is a $\tau$-tilting module in $\modu \Lambda''$. Moreover,  $j_{*}j^{*}(\mathcal{F})\subseteq\mathcal{F}$ if and only if $\Gen j^{*}(T)=j^{*}(\mathcal{T})$ and $(j^{*}(T))^{\perp_{0}}=j^{*}(\mathcal{F})$.
\item[(b)] If $i^{*}$, $j_{*}$ are exact, and $j_{*}j^{*}(\mathcal{T})\subseteq\mathcal{T}$, then $j^{*}(T)$ is a $\tau$-tilting module in $\modu \Lambda''$. Moreover,  $j_{*}j^{*}(\mathcal{F})\subseteq\mathcal{F}$ if and only if $\Gen j^{*}(T)=j^{*}(\mathcal{T})$ and $(j^{*}(T))^{\perp_{0}}=j^{*}(\mathcal{F})$.
\end{itemize}
\end{proposition}
\begin{proof}
%(1) It follows from Proposition \ref{con-prop-st}(1).

(a) Let $X\in\mathcal{T}$ and $Y\in\mathcal{F}$. Since $i^{!}$ is exact by assumption, by Lemma \ref{lem-rec}(5), there is an exact sequence
      $$\xymatrix@C=15pt{X\ar[r]&
            j_{*}j^{*}(X)\ar[r]&0}$$
            in $\modu \Lambda$.
            Applying the functor $\Hom_{\Lambda}(-,Y)$ to the above exact sequence yields an exact sequence
              $$\xymatrix@C=15pt{0\ar[r]&\Hom_{\Lambda}( j_{*}j^{*}(X),Y)\ar[r]&\Hom_{\Lambda}(X,Y)}.$$
              It follows that $\Hom_{\Lambda}( j_{*}j^{*}(X),Y)=0$ since $\Hom_{\Lambda}(X,Y)=0$. So $ j_{*}j^{*}(X)\in{^{\perp_{0}}\mathcal{F}}=\mathcal{T}$ and $ j_{*}j^{*}(\mathcal{T})\subseteq\mathcal{T}$.
 Since $i^{!}$ is exact by assumption, $j_{*}$ is exact by Lemma \ref{lem-2.6}(2). The assertions follow from a similar proof as Proposition \ref{con-prop-st}.

(b) Since $i^{*}$ is exact by assumption, $j_{!}$ is exact by Lemma \ref{lem-2.6}(1). The assertions follow from a similar proof as Proposition \ref{con-prop-st}.
\end{proof}

\section{Examples}
We give some  examples to illustrate the obtained results.

Let $\Lambda', \Lambda''$ be artin algebras and $_{\Lambda'}M_{\Lambda''}$ an $(\Lambda',\Lambda'')$-bimodule, and let $\Lambda={\Lambda'\ {M}\choose\  0\  \ \Lambda''}$ be a triangular matrix algebra.
Then any module in $\modu \Lambda$ can be uniquely written as a triple ${X\choose Y}_{f}$ with $X\in\modu \Lambda'$, $Y\in\modu \Lambda''$
and $f\in\Hom_{\Lambda'}(M\otimes_{\Lambda''}Y,X)$ (see \cite[p.76]{AMRISSO95R} for more details).

\begin{example}
{\rm Let $\Lambda'$ be a finite dimensional algebra given by the quiver $\xymatrix@C=15pt{1\ar[r]&2}$ and $\Lambda''$ be a finite dimensional algebra given by the quiver $\xymatrix@C=12pt{3\ar[r]^{\alpha}&4\ar[r]^{\beta}&5}$ with the relation $\beta\alpha=0$. Define a triangular matrix algebra $\Lambda={\Lambda'\ \Lambda'\choose \ 0\ \ \Lambda''}$, where the right $\Lambda''$-module structure on $\Lambda'$ is induced by the unique algebra surjective homomorphsim $\xymatrix@C=15pt{\Lambda''\ar[r]^{\phi}&\Lambda'}$ satisfying $\phi(e_{3})=e_{1}$, $\phi(e_{4})=e_{2}$, $\phi(e_{5})=0$.  Then $\Lambda$ is
a finite dimensional algebra given by the quiver
$$\xymatrix@C=15pt{&\cdot\\
\cdot\ar[ru]^{\delta}&&\ar[lu]_{\gamma}\cdot\ar[rr]^-{\beta}&&\cdot\\
&\ar[lu]^{\epsilon}\cdot\ar[ru]_{\alpha}}$$
with the relations $\gamma\alpha=\delta\epsilon$ and $\beta\alpha=0$. The Auslander-Reiten quiver of $\Lambda$ is
$$\xymatrix@C=15pt{{0\choose P(5)}\ar[rd]&&{S(2)\choose S(4)}\ar[rd]&&{S(1)\choose 0}\ar[rd]&&0\choose P(3)\ar[rd]\\
&{S(2)\choose P(4)}\ar[ru]\ar[rd]&&P(1)\choose S(4)\ar[ru]\ar[r]\ar[rd]&P(1)\choose P(3)\ar[r]&S(1)\choose P(3)\ar[ru]\ar[rd]&&{0\choose S(3)}.\\
S(2)\choose 0\ar[ru]\ar[rd]&&P(1)\choose P(4)\ar[ru]\ar[rd]&&0\choose S(4)\ar[ru]&&S(1)\choose S(3)\ar[ru]\\
&P(1)\choose 0\ar[ru]&&0\choose P(4)\ar[ru]}$$
By \cite[Example 2.12]{PC14H},
$$\xymatrix{\modu \Lambda'\ar[rr]!R|-{i_{*}}&&\ar@<-2ex>[ll]!R|-{i^{*}}
\ar@<2ex>[ll]!R|-{i^{!}}\modu \Lambda
\ar[rr]!L|-{j^{*}}&&\ar@<-2ex>[ll]!R|-{j_{!}}\ar@<2ex>[ll]!R|-{j_{*}}
\modu \Lambda''}$$
is a recollement of module categories, where
\begin{align*}
&i^{*}({X\choose Y}_{f})=\Coker f, & i_{*}(X)={X\choose 0},&&i^{!}({X\choose Y}_{f})=X,\\
&j_{!}(Y)={M\otimes_{\Lambda''} Y\choose Y}_{1}, & j^{*}({X\choose Y}_{f})=Y, &&j_{*}(Y)={0\choose Y}.
\end{align*}
\begin{itemize}
\item[(1)]
    Take support $\tau$-tilting modules $T'=S(1)$ and $T''=P(5)\oplus P(4)$ in $\modu \Lambda'$ and $\modu \Lambda''$ respectively. They induce torsion pairs
    \begin{align*}
    (\mathcal{T'},\mathcal{F'})=&(\add S(1),\add (P(1)\oplus S(2))),\\
    (\mathcal{T''},\mathcal{F''})=&(\add (P(5)\oplus P(4)\oplus S(4)),\add S(3))
    \end{align*}
    in $\modu \Lambda'$ and $\modu \Lambda''$ respectively. Then by Lemma \ref{3.1}, we have a glued torsion pair

    \begin{align*}
(\mathcal{T},\mathcal{F})&=(\add({S(2)\choose S(4)}\oplus {P(1)\choose P(4)}\oplus {0\choose P(4)}\oplus {S(2)\choose P(4)}\oplus {0\choose S(4)}\oplus\\
&{0\choose P(5)}\oplus {S(1)\choose 0}\oplus {P(1)\choose S(4)}),
\add({P(1)\choose 0} \oplus {0\choose S(3)}\oplus{S(2)\choose 0}))
\end{align*}
     in $\modu \Lambda$. By Corollary \ref{cor-st}, there is a support $\tau$-tilting $\Lambda$-module $T=   {0\choose P(5)}\oplus{S(2)\choose P(4)}\oplus {0\choose P(4)}\oplus {P(1)\choose P(4)}$ such that $(\Gen T,T^{\perp_{0}})=(\mathcal{T},\mathcal{F})$.
     Obviously, $i_{*}(T')\oplus j_{!}(T'')={S(1)\choose 0}\oplus {0\choose P(5)}\oplus{S(2)\choose P(4)}\neq T$ and $i^{*}$ is not exact.

     \item[(2)] Take a support $\tau$-tilting module $T={P(1)\choose 0}\oplus {S(2)\choose 0}\oplus {S(1)\choose S(3)}\oplus {0\choose P(5)}$ in $\modu \Lambda$. It induces a torsion pair
    \begin{align*}
(\mathcal{T},\mathcal{F})=&(\add({P(1)\choose 0}\oplus {S(1)\choose 0}\oplus {S(2)\choose 0}\oplus {S(1)\choose S(3)}\oplus {0\choose S(3)}\\
&\oplus {0\choose P(5)}), \add ({0\choose P(3)}\oplus {0\choose S(4)}))
\end{align*}
 in $\modu \Lambda$.
 By Proposition \ref{con-prop-st},  $j^{*}(T)=S(3)\oplus P(5)$ is a support $\tau$-tilting module in $\modu \Lambda''$.
 Moreover, since $j_{*}j^{*}(\mathcal{F})={0\choose P(3)}\oplus {0\choose S(4)}\subseteq \mathcal{F}$,
 \begin{align*}
 (\Gen j^{*}(T),(j^{*}(T))^{\perp_{0}})&=(\add (P(5)\oplus S(3)),\add ( S(4)\oplus P(3)))\\
 &=(j^{*}(\mathcal{T}),j^{*}(\mathcal{F}))
 \end{align*}
 in $\modu \Lambda''$.

  \item[(3)] Take a support $\tau$-tilting module $T={S(2)\choose S(4)}\oplus {P(1)\choose S(4)}\oplus{0\choose S(4)}\oplus {P(1)\choose P(3)}$  in $\modu \Lambda$. It induces a torsion pair
    \begin{align*}
(\mathcal{T},\mathcal{F})=&(\add({S(2)\choose S(4)}\oplus{S(1)\choose 0}\oplus {0\choose P(3)}\oplus {P(1)\choose S(4)}\oplus {P(1)\choose P(3)}\oplus{S(1)\choose P(3)} \\
&\oplus {0\choose S(3)}\oplus {0\choose S(4)}\oplus {S(1)\choose S(3)}),
\add({0\choose P(5)} \oplus {S(2)\choose 0}\oplus{S(2)\choose P(4)}\\
& \oplus {P(1)\choose P(4)} \oplus{0\choose P(4)}\oplus {P(1)\choose 0}))
\end{align*}
 in $\modu \Lambda$.
 By Proposition \ref{con-prop-st}, $j^{*}(T)=S(4)\oplus S(4)\oplus S(4)\oplus P(3)$ is a support $\tau$-tilting module in $\modu \Lambda''$.
 Since $j_{*}j^{*}(\mathcal{F})={0\choose P(5)}\oplus {0\choose P(4)}\subseteq\mathcal{F}$,
 \begin{align*}
 (\Gen j^{*}(T),(j^{*}(T))^{\perp_{0}})&=(\add (S(4)\oplus P(3)\oplus S(3)),\add (P(5)\oplus P(4)))\\
 &=(j^{*}(\mathcal{T}),j^{*}(\mathcal{F}))
 \end{align*}
in $\modu \Lambda''$.
\item[(4)] Take $\tau$-tilting modules $T'=P(1)\oplus S(1)$ and $T''=P(5)\oplus P(3)\oplus S(3)$ in $\modu \Lambda'$ and $\modu \Lambda''$ respectively. They induce torsion pairs
    \begin{align*}
    (\mathcal{T'},\mathcal{F'})=&(\add(P(1)\oplus S(1)),\add S(2)),\\
(\mathcal{T''},\mathcal{F''})=&(\add (P(5)\oplus P(3)\oplus S(3)), \add S(4))
\end{align*}
 in $\modu \Lambda'$ and $\modu \Lambda''$ respectively.
Then by Proposition \ref{rec-tautilting},
 $T=i_{*}(T')\oplus j_{!}(T'')={P(1)\choose 0}\oplus {S(1)\choose 0}\oplus {0\choose P(5)}\oplus{P(1)\choose P(3)}\oplus {S(1)\choose S(3)}$ is a $\tau$-tilting module in $\modu \Lambda$. It induces a torsion pair
 \begin{align*}
(\mathcal{T},\mathcal{F})=&(\add ({0\choose P(5)}\oplus{P(1)\choose 0}\oplus{S(1)\choose 0}
\oplus {P(1)\choose P(3)}\oplus {S(1)\choose P(3)}\oplus {0\choose P(3)} \\
&\oplus{S(1)\choose S(3)}\oplus {0\choose S(3)}),
\add ({S(2)\choose 0}\oplus {S(2)\choose S(4)}\oplus {0\choose S(4)}))
\end{align*}
 in $\modu \Lambda$,
which is exactly the glued torsion pair with respect to $(\mathcal{T'},\mathcal{F'})$ and $(\mathcal{T''},\mathcal{F''})$.

\item[(5)] The condition ``\ $i_{*}i^{!}(\mathcal{T})\subseteq \mathcal{T}$\ '' is necessary in Proposition \ref{rec-tautilting}. Take  $\tau$-tilting modules $T'=P(1)\oplus S(1)$ and $T''=P(3)\oplus P(4)\oplus S(4)$ in $\modu \Lambda'$ and $\modu \Lambda''$ respectively.
They induce torsion pairs
    \begin{align*}
    (\mathcal{T'},\mathcal{F'})=&(\add(P(1)\oplus S(1)),\add S(2)),\\
(\mathcal{T''},\mathcal{F''})=&(\add (P(3)\oplus P(4)\oplus S(4)\oplus S(3)), \add P(5))
\end{align*}
 in $\modu \Lambda'$ and $\modu \Lambda''$ respectively. By Lemma \ref{3.1}, there is a glued torsion pair
 \begin{align*}
(\mathcal{T},\mathcal{F})=&(\add ({S(2)\choose P(4)}\oplus {P(1)\choose P(4)}\oplus{P(1)\choose 0}\oplus{0\choose P(4)}\oplus{S(2)\choose S(4)}\oplus {P(1)\choose S(4)}
\\
&\oplus {0\choose S(4)}\oplus{S(1)\choose P(3)}
\oplus {S(1)\choose S(3)}
\oplus {S(1)\choose 0}\oplus {P(1)\choose P(3)}\oplus{0\choose P(3)}\\
&\oplus {0\choose S(3)}), \ \add ({S(2)\choose 0}\oplus {0\choose P(5)}))
\end{align*}
 in $\modu \Lambda$.
 Then by Corollary \ref{cor-tau}(1), there is a $\tau$-tilting module $T={S(2)\choose P(4)}\oplus{P(1)\choose P(4)}\oplus {P(1)\choose 0}\oplus {S(2)\choose S(4)}\oplus {P(1)\choose P(3)}$ such that $(\Gen T,T^{\perp_{0}})=(\mathcal{T},\mathcal{F})$.
 Obviously, $i_{*}i^{!}(\mathcal{T})={P(1)\choose 0}\oplus {S(1)\choose 0}\oplus {S(2)\choose 0}\nsubseteq \mathcal{T}$
and
 $i_{*}(T')\oplus j_{!}(T'')={P(1)\choose 0}\oplus {S(1)\choose 0}\oplus {S(2)\choose S(4)}\oplus{P(1)\choose P(3)}\oplus {S(2)\choose P(4)}\neq T$. So $i_{*}i^{!}(\mathcal{T})\subseteq\mathcal{T}$ is necessary in Proposition \ref{rec-tautilting}.

\item[(6)] Take a $\tau$-tilting module $T={S(2)\choose S(4)}\oplus {P(1)\choose P(4)}\oplus {0\choose P(4)}\oplus {P(1)\choose S(4)}\oplus {P(1)\choose P(3)}$ in $\modu \Lambda$.
    It induces a torsion pair
    \begin{align*}
(\mathcal{T},\mathcal{F})=&(\add({S(2)\choose S(4)}\oplus{P(1)\choose P(4)}\oplus {P(1)\choose S(4)}\oplus{0\choose P(4)}\oplus {S(1)\choose 0}\oplus {P(1)\choose P(3)} \\ &\oplus {0\choose S(4)}\oplus{S(1)\choose P(3)}\oplus {S(1)\choose S(3)}\oplus {0\choose S(3)}\oplus{0\choose P(3)},
\add({0\choose P(5)}\\
&\oplus {S(2)\choose 0}\oplus {P(1)\choose 0} \oplus {S(2)\choose P(4)}))
\end{align*}
 in $\modu \Lambda$.
 By Proposition \ref{prop-t},  $j^{*}(T)=S(4)\oplus P(4)\oplus S(4)\oplus P(4)\oplus P(3)$ is a $\tau$-tilting module in $\modu \Lambda''$.
 Since $j_{*}j^{*}(\mathcal{F})={0\choose P(5)}\oplus {0\choose P(4)}\nsubseteq \mathcal{F}$,
  \begin{align*}
      (\Gen j^{*}(T),(j^{*}(T))^{\perp_{0}})&=(\add (P(4)\oplus S(4)\oplus P(3)\oplus S(3)),\add P(5)) \\
     & \neq (j^{*}(\mathcal{T}),j^{*}(\mathcal{F})).
  \end{align*}
 Obviously, $i^{*}(T)=S(1)\oplus S(1)$ is not a $\tau$-tilting module in $\modu \Lambda'$ and $i^{*}$ is not exact.
\end{itemize}}
\end{example}

\subsection*{Acknowledgements}

The first author was supported by the NSF of China (12001168), Henan University of Engineering (DKJ2019010) and the Key Research Project of Education Department of Henan Province (21A110006). The third author was supported by the NSF of China (11971225, 11901341), the project ZR2019QA015 supported by Shandong Provincial Natural Science Foundation, the project funded by China Postdoctoral Science
Foundation (2020M682141), and the Young Talents Invitation Program of Shandong Province. The authors thank the referee for the useful suggestions.

%%%%%%%%%%% To ease editing, use normal size for the references:

\normalsize

\end{document}